\DeclareMathOperator{\adj}{adj}
\DeclareMathOperator{\im}{Im}
\DeclareMathOperator{\re}{Re}
\DeclareMathOperator{\supp}{supp}
\DeclareMathOperator{\spann}{span}
\theoremstyle{plain}
\newtheorem{theorem}{Theorem}[section]
\newtheorem{definition}[theorem]{Definition}
\newtheorem{lemma}[theorem]{Lemma}
\newtheorem{rem}[theorem]{Remark}
\newtheorem{prop}[theorem]{Proposition}
\newtheorem{condition}[theorem]{Condition}
\newtheorem{ex}[theorem]{Example}
\newtheorem{corollary}[theorem]{Corollary}
\numberwithin{equation}{section}
\newcommand{\Lapl}{\mathcal{L}}
\begin{document}

\title{Calder\'{o}n problem for Yang-Mills connections}
\author[M. Ceki\'{c}]{Mihajlo Ceki\'{c}}
%\date{\today}
%\address{Department of Pure Mathematics and Mathematical Statistics, University of Cambridge, Cambridge CB3 0WB, UK}
%\email{m.cekic@dpmms.cam.ac.uk}
\address{Max-Planck Institute for Mathematics, Vivatsgasse 7, 53111, Bonn, Germany}
\email{m.cekic@mpim-bonn.mpg.de}

\maketitle

\begin{abstract}
We consider the problem of identifying a unitary Yang-Mills connection $\nabla$ on a Hermitian vector bundle from the Dirichlet-to-Neumann (DN) map of the connection Laplacian $\nabla^*\nabla$ over compact Riemannian manifolds with boundary. We establish uniqueness of the connection up to a gauge equivalence in the case of trivial line bundles in the smooth category and for the higher rank case in the analytic category, by using geometric analysis methods and essentially only one measurement. 

Moreover, by using a Runge-type approximation argument along curves to recover holonomy, we are able to uniquely determine both the bundle structure and the connection, but at the cost of having more measurements. Also, we prove that the DN map is an elliptic pseudodifferential operator of order one on the restriction of the vector bundle to the boundary, whose full symbol determines the complete Taylor series of an arbitrary connection, metric and an associated potential at the boundary.
\end{abstract}

\section{Introduction}

In this paper, we consider the Calder\'{o}n inverse problem for a special type of connections, called the Yang-Mills connections. Given a Hermitian vector bundle $E$ of rank $m$ over a compact Riemannian manifold $(M, g)$ with non-empty boundary and a unitary connection $A$ ($\nabla$) on $E$, one may consider the connection Laplacian denoted by $d_A^*d_A$ ($\nabla^*\nabla$), where $d_A^*$ ($\nabla^*$) denotes the formal adjoint of $d_A$ ($\nabla$) with respect to the Hermitian and Riemannian structures. Sometimes this operator is called the magnetic Laplacian because it is used to represent the magnetic Schr\"{o}dinger equation, where $A$ corresponds to the magnetic potential.

Given this, we may define the associated Dirichlet-to-Neumann (DN map in short) $\Lambda_A:C^{\infty}(\partial M; E|_{\partial M}) \to C^{\infty}(\partial M; E|_{\partial M})$\footnote{By $C^{\infty}(M; E)$ we denote the space of smooth sections of $E$ over $M$.} by solving the Dirichlet problem:
\begin{align}\label{DNmap}
d_A^*d_A(u) = 0, \quad u|_{\partial M} = f
\end{align}
and setting $\Lambda_A(f) = d_A(u)(\nu)$, where $\nu$ is the outwards pointing normal at the boundary. The problem can then be posed as asking whether the map $A \mapsto \Lambda_A$ is injective modulo the natural obstruction, or in other words whether $\Lambda_A = \Lambda_B$ implies the existence of a gauge automorphism $F:E \to E$ with $F^{*}(A) = B$ and $F|_{\partial M} = Id$.

This problem was considered in \cite{cek1, Eskin, MagU, LCW}; for a survey of the Calder\'{o}n problem for metrics, see \cite{survey}. In this paper, we take two approaches to uniqueness: one is via geometric analysis and the other by constructing special gauges along curves via the Runge approximation property of elliptic equations. As far as we know, this paper is the first one that considers the connection problem and does not rely on the Complex Geometric Optics solutions (see any of \cite{cek1, Eskin, MagU, LCW}), but on unique continuation principles and geometric analysis of the zero set of a solution to an elliptic equation.

The Yang-Mills connections generalise flat connections and are important in physics and geometry. They satisfy the following equation:
\begin{align*}
D_A^*F_A = 0 %(d_A^{\text{End}})^*F_A = 0
\end{align*}
where $D_A = d_A^{\text{End}}$ is the induced connection on the endomorphism bundle $\text{End} E$ and $F_A$ is the curvature of $A$ (see the preliminaries for more details).

Firstly, we prove that the DN map $\Lambda_A$ is an elliptic pseudodifferential operator of order $1$ on the restriction of the vector bundle to the boundary and deduce that its full symbol determines the full Taylor series of the connection, metric and a potential at the boundary. This was first proved in the case of a Riemannian metric by Lee and Uhlmann \cite{leeuhlmann} and later considered in the $m = 1$ case with a connection in \cite{LCW}. In this paper, we generalise this approach to the case of systems and prove the analogous result.

\subsection{Motivation.} Let us explain some motivation for considering this problem. Partly, the idea came from the analogy between Einstein metrics in Riemannian geometry and Yang-Mills connections on Hermitian vector bundles. Also, Guillarmou and S\'{a} Barreto in \cite{guill} prove the recovery of two Einstein manifolds from the DN map for metrics. The method of their proof relies on a reconstruction near the boundary, where in special harmonic coordinates Einstein equations become quasi-linear elliptic (the metric is thus also analytic in such coordinates). Hence, by combining the boundary determination result and a unique continuation result for elliptic systems they prove one can identify the two metrics in a neighbourhood of the boundary. Moreover, by exploiting this analytic structure they observe that the method of Lassas and Uhlmann \cite{calderon_analytic} who prove the analytic Calder\'{o}n problem for metrics, may be used to extend this local isometry to the whole of the manifold.\footnote{This works by embedding the two manifolds in a suitable Sobolev space using Green's functions of the metric Laplacians and showing the appropriate composition is an isometry.}

In our case, the conventionally analogous concept to harmonic coordinates to consider would be the \emph{Coulomb gauge} \cite{uhlenbeck} which transforms the connection to a form where $d^*(A) = 0$, so that the Yang-Mills equations become an elliptic system with principal diagonal part. \emph{However}, this gauge does not tie well with the DN map, so in Lemma \ref{gauge_constr} we construct an \emph{analogue of the harmonic gauge for connections}. In this gauge, we may use a similar unique continuation property (UCP in short) result to yield the equivalence of connections close to the boundary. However, for going further into the interior we designed new methods.

%Now we may formulate one of the main theorems in this paper.

\subsection{Uniqueness via geometric analysis.} We believe this approach to be entirely new. Here is one of the main theorems of the paper.

\begin{theorem}[Global result]\label{linebundle}
Assume $\dim M \geq 2$, let $E = M \times \mathbb{C}$ be a Hermitian line bundle with standard metric and $\emptyset \neq \Gamma \subset \partial M$ an open, non-empty subset of the boundary. Let $A$ and $B$ be two unitary Yang-Mills connections on $E$. If $\Lambda_A(f)|_{\Gamma} = \Lambda_B(f)|_{\Gamma}$ for all $f \in C_0^\infty(\Gamma; E|_\Gamma)$, then there exists a gauge automorphism (unitary) $h$ with $h|_{\Gamma} = Id$ such that $h^*(A) = B$ on the whole of $M$.
\end{theorem}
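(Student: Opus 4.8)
The plan is to combine a boundary--determination and unique--continuation argument near $\Gamma$ with a covering--space trick that extracts the \emph{global} monodromy of $A$ relative to $B$ from a single interior measurement. Since $E=M\times\mathbb{C}$, a unitary connection is $d+A$ with $A$ a purely imaginary $1$--form, a gauge transformation is a map $h:M\to U(1)$ acting by $A\mapsto A+h^{-1}dh$, the curvature is $F_A=dA$, and the Yang--Mills equation reduces to $d^*dA=0$. First I would invoke the boundary--determination result (the full symbol of $\Lambda_A$ computed earlier in the paper) together with the gauge of Lemma~\ref{gauge_constr}: after applying gauge transformations equal to the identity on $\Gamma$, we may assume $A$ and $B$ are in this ``harmonic'' gauge and have the same Taylor series along $\Gamma$. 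In that gauge the Yang--Mills equations form an elliptic system with diagonal (scalar Laplacian) principal part, so Aronszajn--type unique continuation from $\Gamma$ forces $A=B$ on a connected collar neighbourhood $U$ of $\Gamma$. From now on assume $A=B$ on $U$; it remains to produce a global gauge.

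Next I would show the curvatures agree everywhere. The $2$--form $\beta:=F_A-F_B$ is closed by the Bianchi identity and co--closed by the Yang--Mills equation, hence harmonic for the Hodge Laplacian; since $\beta$ vanishes on $U$ and the Hodge Laplacian is of Laplace type (scalar principal symbol), weak unique continuation gives $\beta\equiv 0$ on the connected manifold $M$. Therefore $\alpha:=A-B$ is a \emph{closed} (flat) imaginary $1$--form vanishing on $U$, and the theorem reduces to showing that all periods of $\alpha$ lie in $2\pi i\mathbb{Z}$: for then $h(x):=\exp\big(-\int_{x_0}^{x}\alpha\big)$ with $x_0\in\Gamma$ is a well--defined map $M\to U(1)$ with $h\equiv \mathrm{Id}$ near $\Gamma$ and $h^*A=B$.

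To pin down the periods, fix $0\neq f\in C_0^\infty(\Gamma;E|_\Gamma)$ and let $u,w$ solve \eqref{DNmap} for $A$ and $B$ respectively with data $f$. On $U$ one has $d_A^*d_A(u-w)=0$, $(u-w)|_\Gamma=0$ and $d_A(u-w)(\nu)|_\Gamma=\Lambda_A f|_\Gamma-\Lambda_B f|_\Gamma=0$, so Cauchy uniqueness for the scalar elliptic operator $d_A^*d_A$ yields $u=w$ near $\Gamma$. Now pass to the universal cover $\pi:\widetilde M\to M$: since $\alpha$ is closed and $\widetilde M$ is simply connected, $\pi^*\alpha=d\phi$ for a smooth $\phi:\widetilde M\to i\mathbb{R}$, and a direct check gives $\pi^*(d_A^*d_A)=e^{-\phi}\,\pi^*(d_B^*d_B)\,e^{\phi}$, so both $e^{\phi}\pi^*u$ and $\pi^*w$ are annihilated by $\pi^*(d_B^*d_B)$. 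On a component of $\pi^{-1}(U)$ these two functions agree up to the constant $e^{c_0}$ (as $\phi$ is locally constant there, $\alpha|_U=0$), so unique continuation on the connected $\widetilde M$ forces $e^{\phi}\pi^*u=e^{c_0}\pi^*w$ on all of $\widetilde M$. Applying a deck transformation $\sigma\in\pi_1(M)$ — under which $\pi^*u,\pi^*w$ are invariant while $\sigma^*\phi=\phi+c_\sigma$ with $c_\sigma=\oint_\sigma\alpha$ — gives $(e^{c_\sigma}-1)\,e^{\phi}\pi^*u\equiv 0$; since $\pi^*u\not\equiv 0$ (because $f\neq 0$) and $|e^{\phi}|\equiv 1$, we conclude $e^{c_\sigma}=1$, i.e. $c_\sigma\in 2\pi i\mathbb{Z}$. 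This is exactly the period condition needed, and unwinding the initial gauge transformations completes the proof.

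The main obstacle is this last step: \emph{reading off the global holonomy of $A$ relative to $B$ from data on a possibly tiny $\Gamma$}, which fails for general connections and succeeds here only because the Yang--Mills equation makes $\alpha$ flat, so that an integrating factor exists on the cover and unique continuation can be run there. (Equivalently, one may phrase the argument on $\widetilde M\setminus\{\pi^*u=0\}$ using that the zero set of a nontrivial solution of an elliptic equation is nowhere dense.) Secondary points to be careful about are: checking that the gauge of Lemma~\ref{gauge_constr} genuinely renders the Yang--Mills system principally diagonal so Cauchy uniqueness applies near $\Gamma$; that UCP on the non-compact $\widetilde M$ is purely local and so unaffected by non-compactness; and, if $\Gamma$ or $\partial M$ is disconnected, that $\phi$ can still be normalised to vanish over all of $U$ — which follows by applying the same covering-space identity near each component of $\Gamma$, using that a nonzero $B$--harmonic function cannot vanish on any collar.
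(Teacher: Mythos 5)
Your argument is sound in its essentials but takes a genuinely different route from the paper's. The paper works with the quotient $h=f/g$ of two \emph{globally} defined solutions of $d_A^*d_Af=0$ and $d_B^*d_Bg=0$, must therefore confront the zero sets of $f$ and $g$ (Lemma \ref{zeroset}), and gets past them by the ``drilling'' procedure based on Mazzeo's degenerate unique continuation \cite{mazzeo}, followed by a delicate path-perturbation and extension argument. You instead (i) observe that $F_A-F_B$ is a harmonic $2$-form (closed by Bianchi, co-closed by Yang--Mills) vanishing on a collar of $\Gamma$, hence identically zero by UCP for the Hodge Laplacian, so $\alpha=A-B$ is closed; and (ii) extract integrality of the periods of $\alpha$ from a \emph{single} interior solution $u$ via an integrating factor and deck transformations on the universal cover. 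This bypasses the zero-set analysis entirely (the only role of $\mathcal{N}(u)$ is the harmless fact that $\pi^*u\not\equiv0$), and is closer in spirit to the holonomy arguments of Section 6 and of Theorem 7.3 in \cite{cek1} than to drilling. What your route buys is a much shorter global step; what it costs is that it is tied to the abelian case, where curvature is gauge-invariant and the integrating factor $e^{\phi}$ exists, whereas the drilling template is what the paper pushes to $m>1$ in the analytic category.

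The one step that does not survive scrutiny as written is the reduction ``from now on assume $A=B$ on $U$''. The gauge $h_0$ with $h_0^*A=B$ on the collar and $h_0|_\Gamma=1$ is only defined near the boundary (it is $f/g$ for harmonic gauges, invertible only there), and absorbing it into $B$ requires extending $h_0$ to a smooth map $M\to U(1)$. The obstruction is exactly that the class of $h_0$ in $H^1(U;\mathbb{Z})$, i.e.\ the periods $\tfrac{1}{2\pi i}\oint_\gamma(B-A)$ over loops $\gamma\subset U$, be induced from an integral class on $M$ --- essentially the integrality you are trying to prove, so the reduction is circular. The repair is routine and requires no new idea: do not absorb $h_0$. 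One still has $F_A=F_B$ on $U$ (gauge-equivalent line-bundle connections have equal curvature), so the harmonic-form UCP gives $d\alpha=0$ on $M$; near $\Gamma$ the correct identity is $h_0^{-1}u=w$ rather than $u=w$ (since $d_B^*d_B(h_0^{-1}u)=h_0^{-1}d_A^*d_Au=0$ and the Cauchy data on $\Gamma$ match); and on each component of $\pi^{-1}(U)$ the function $e^{\phi}\,\pi^*h_0$ is locally constant because $d\phi=-\pi^*(h_0^{-1}dh_0)$ there, so $e^{\phi}\pi^*u=c\,\pi^*w$ still holds near the lift of $\Gamma$ and propagates by UCP; the deck-transformation step is unchanged. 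Two smaller points: for $n=2$ Theorem \ref{boundarydet} does not apply and you must use Remark \ref{n=2boundarydet} together with the bootstrap in Theorem \ref{local} to match the jets at $\Gamma$; and your closing remark about disconnected $\Gamma$ is genuinely needed to get $h|_\Gamma\equiv 1$ rather than merely locally constant --- your sketch (evaluate $e^{\phi}\pi^*u=c\,\pi^*w$ near a lift of each component, where $u=w$ and $u\not\equiv0$) does close it.
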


%Before further proceeding to the organisation of the paper,

We now explain this geometric analysis type method in more detail. Our gauge $F$ from Lemma \ref{gauge_constr} ($m \times m$ matrix function on $M$) satisfies the equation $d_A^*d_A F = 0$ and so we cannot guarantee that it is non-singular globally. We show that the zero set of the determinant of $F$ is suitably small in the smooth case when $m = 1$ and in the analytic case for arbitrary $m$ -- it is covered by countably many submanifolds of codimension one, or in the language of geometric analysis it is $(n-1)$-$C^\infty$-rectifiable. Since (the complement of) this singular set can be topologically non-trivial (see Figure \ref{zero_set}), we end up with barriers consisting of singular points of $F$ that prevent us to use the UCP and go inside the manifold. This is addressed by looking at the sufficiently nice points of the barriers and locally near these points, using a degenerate form of UCP (in the smooth case) or a suitable form of analytic continuation (in the analytic case) to extend an appropriate gauge equivalence between the two given connections beyond the barriers; we name this procedure as ``drilling". Since we show there is a dense set of such nice points, we may perform the drilling to extend our gauges globally.

Here is what we prove in the analytic case, for arbitrary $m$:

\begin{theorem}\label{analytic_m>1}
Let $(M, g)$ be an analytic Riemannian manifold with $\dim M \geq 2$ and let $\Gamma$ be as in Theorem \ref{linebundle}.\footnote{The metric $g$ is only assumed to be analytic in the interior of $M$ and smooth up to the boundary.} If $E = M \times \mathbb{C}^{m}$ is a Hermitian vector bundle with the standard structure and if $A$ and $B$ are two unitary Yang-Mills connections on $E$, then $\Lambda_A(f)|_\Gamma = \Lambda_B(f)_\Gamma$ for all $f \in C_0^\infty(\Gamma; E|_\Gamma)$ if and only if there exists a gauge automorphism $H$ of $E$, with $H|_{\Gamma} = Id$, such that $H^*(A) = B$.
\end{theorem}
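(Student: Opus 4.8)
The converse implication is short: if $H^*A=B$ with $H|_\Gamma=\mathrm{Id}$ and $f\in C_0^\infty(\Gamma;E|_\Gamma)$, then for a solution $u$ of $d_A^*d_Au=0$ with $u|_{\partial M}=f$ the section $v:=H^{-1}u$ solves $d_B^*d_Bv=0$, has $v|_{\partial M}=H^{-1}|_{\partial M}f=f$ (since $f$ is supported where $H=\mathrm{Id}$), and satisfies $d_Bv=H^{-1}d_Au$; hence $\Lambda_Bf=H^{-1}|_{\partial M}\Lambda_Af$, which agrees with $\Lambda_Af$ on $\Gamma$. For the forward implication the plan runs in three steps. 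Step 1 (boundary determination and a global auxiliary section): since $\Lambda_A,\Lambda_B$ are elliptic pseudodifferential operators of order one whose full symbols agree along $\Gamma$, the boundary-determination result (the symbol recovers the Taylor series of the connection) together with the harmonic-type gauge of Lemma \ref{gauge_constr} produces an $m\times m$ matrix function $F$ on $M$ solving the gauge-covariant elliptic equation $(\nabla^{\mathrm{Hom}})^*\nabla^{\mathrm{Hom}}F=0$ with $F|_{\partial M}=\mathrm{Id}$, where $\nabla^{\mathrm{Hom}}F=d_A\circ F-F\circ d_B=dF+AF-FB$, and with $\nabla^{\mathrm{Hom}}F=0$ — i.e. $F^*A=B$ — holding on a collar of $\Gamma$. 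Because $g$ is analytic in $\mathrm{int}\,M$ and every Yang–Mills connection is real-analytic in local Coulomb gauges (analytic hypoellipticity of the Yang–Mills system), I pass to such local analytic gauges, where $F$ solves an elliptic system with analytic coefficients and is therefore real-analytic in $\mathrm{int}\,M$. The set $Z:=\{\det F=0\}\cap\mathrm{int}\,M$ and the condition $\nabla^{\mathrm{Hom}}F=0$ are gauge-covariant, hence intrinsically defined.

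Step 2 (structure of $Z$ and the main component). As $\det F\equiv 1$ on the collar, $\det F\not\equiv 0$ on the connected manifold $\mathrm{int}\,M$, so $Z$ is a proper real-analytic subset: a locally finite union of analytic submanifolds of codimension $\ge 1$, with regular part $Z_{\mathrm{reg}}$ (the points where $Z$ is locally an analytic hypersurface) open and dense in $Z$, and with $Z\setminus Z_{\mathrm{reg}}$ of codimension $\ge 2$. On the component $U$ of $M\setminus Z$ adjacent to the collar, the analytic tensor $\nabla^{\mathrm{Hom}}F$ vanishes on a nonempty open subset, hence on all of $U$ by the identity theorem; thus $F^*A=B$ and $F$ is invertible on $U$. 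The obstruction is that $M\setminus Z$ may be disconnected, its other components being shielded from $U$ by ``barriers'' contained in $Z$ (cf. Figure \ref{zero_set}).

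Step 3 (drilling and conclusion). Given $p\in Z_{\mathrm{reg}}$ lying in the closure of a component $V'$ of $M\setminus Z$ on which $\nabla^{\mathrm{Hom}}F$ is already known to vanish, I restrict the (analytic, parallel-on-$V'$) section $F$ to the hypersurface $Z_{\mathrm{reg}}$ near $p$ and propagate it transversally by the linear ODE $\nabla^{\mathrm{Hom}}_N\tilde F=0$ for a transversal $N$; a short computation, in which the obstructing curvature term $R^{\mathrm{Hom}}(N,\cdot)\tilde F$ is seen to vanish because it vanishes on the open $V'$-side and is analytic, shows $\nabla^{\mathrm{Hom}}\tilde F=0$ near $p$. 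Parallelism gives $d\det\tilde F=(\Tr B-\Tr A)\det\tilde F$, so $\det\tilde F$ is nowhere zero near $p$; and $F,\tilde F$ are analytic and coincide on the open $V'$-side, hence on a full neighbourhood of $p$. Thus $F=\tilde F$ is parallel and invertible across $p$, and $\nabla^{\mathrm{Hom}}F=0$ on every component of $M\setminus Z$ touching $p$. Since removing the codimension-$\ge 2$ set $Z\setminus Z_{\mathrm{reg}}$ leaves $\mathrm{int}\,M$ connected, any component of $M\setminus Z$ is joined to $U$ by a finite chain of components meeting along regular points of $Z$; drilling along such chains yields $\nabla^{\mathrm{Hom}}F=0$ on all of $M\setminus Z$. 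Then $d\det F=(\Tr B-\Tr A)\det F$ holds on the dense set $M\setminus Z$, hence on $\mathrm{int}\,M$ by continuity, forcing $\det F$ to be nowhere zero, i.e. $Z=\emptyset$. Consequently $F$ is invertible on $M$ (it is invertible on $\mathrm{int}\,M$ and $F|_{\partial M}=\mathrm{Id}$), $\nabla^{\mathrm{Hom}}F=0$ on $M$ by continuity, and $F|_\Gamma=\mathrm{Id}$; taking $H:=F$ finishes the proof.

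The principal obstacle is Step 3: establishing the local continuation of the gauge equivalence across $Z_{\mathrm{reg}}$ (the ``drilling'' lemma itself), together with the bookkeeping required to traverse the singular strata of $Z$ and to keep the local analytic-gauge reductions mutually consistent; Step 1 is also substantial, resting on the boundary-determination analysis and on the construction of the harmonic-type gauge in Lemma \ref{gauge_constr}.
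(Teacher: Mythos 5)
Your overall architecture (local identification near $\Gamma$, analytic structure of a degeneracy set, drilling across its regular points, a connectivity argument) mirrors the paper's, and your converse direction is fine. But Step 1 contains a genuine gap that the rest of the argument rests on. The object produced by Lemma \ref{gauge_constr} is a solution of $d_A^*d_AF=0$ with $d_AF=dF+AF$ (left multiplication only) -- \emph{not} of $(\nabla^{\mathrm{Hom}})^*\nabla^{\mathrm{Hom}}F=0$ -- and, more importantly, you never actually use the hypothesis $\Lambda_A=\Lambda_B$ on $\Gamma$ to justify that your single matrix $F$ satisfies $\nabla^{\mathrm{Hom}}F=0$ on a collar of $\Gamma$. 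The DN-map hypothesis concerns the operators $d_A^*d_A$ and $d_B^*d_B$ with Dirichlet data supported in $\Gamma$; it says nothing directly about the Dirichlet problem for the homomorphism Laplacian with boundary value $Id$ on all of $\partial M$. The paper's mechanism is to take \emph{two} global solutions, $d_A^*d_AF=0$ and $d_B^*d_BG=0$, with the same Dirichlet data supported in $\Gamma$ (so that equality of the DN maps yields matching Neumann data on $\Gamma$), show via boundary determination and UCP that $A'=F^*A$ and $B'=G^*B$ agree near $V\subset\Gamma$, and only then form the parallel quotient $H=FG^{-1}$. Your single harmonic $F$ cannot play both roles: a globally smooth solution of a second-order elliptic system (needed for the zero-set structure theory) is not parallel, and the parallel gauge is a priori only the locally defined quotient. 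Note also that if your seed \emph{did} hold -- a globally smooth $F$, parallel on a component $V'$ touching $Z$ -- then the determinant ODE $d\det F=(\Tr B-\Tr A)\det F$ along a path in $V'$ approaching $Z$ would force $\det F$ to have a nonzero limit there, contradicting continuity; so the drilling would be vacuous and all the content sits in the unestablished seed.

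Step 3 is also internally inconsistent as written. The initial data for your transversal parallel transport is $F|_{Z_{\mathrm{reg}}}$ near $p\in Z=\{\det F=0\}$, i.e.\ a \emph{singular} matrix; the very ODE $d\det\tilde F=(\Tr B-\Tr A)\det\tilde F$ you invoke then forces $\det\tilde F\equiv 0$ along the transversal, not ``nowhere zero near $p$''. The paper's drilling works differently: it writes $H=F\,\adj(G)/\det G$ with $\det G=x_n^kg_1$, uses \emph{unitarity} of $H$ on the $x_n>0$ side to conclude $H$ is bounded, deduces the divisibility $F\,\adj(G)=x_n^kH_1$, and so extends $H=H_1/g_1$ smoothly and invertibly across the barrier; analyticity (obtained by conjugating with local Coulomb gauges $X,Y$ for $A$ and $B$ separately, a point you would also need for the coefficients of your two-sided operator) then propagates $H^*A=B$. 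Finally, even granting everything, your construction gives $H|_{\partial M}=Id$ only because you imposed it on the non-$\Gamma$ part of the boundary where the hypothesis gives no information; the paper instead exhausts $\Gamma$ by nested sets $V_i$ and extracts a limit gauge by Arzel\`a--Ascoli to achieve $H|_\Gamma=Id$.
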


We briefly remark that the proof the above theorem also relies on using the Coulomb gauge locally, since in this gauge the connection is analytic, so $\det F$ satisfies the SUCP (see Lemma \ref{recYM-analytic}); we additionally apply this gauge in the drilling procedure.
%we get that $A$ is analytic and hence $F$ also, since they satisfy elliptic equations with analytic coefficients.

The main difficulty in proving uniqueness via the geometric analysis method for the smooth, higher rank ($m > 1$) case is that the strong unique continuation property (SUCP) for the determinant $\det{F}$ of a solution to $d_A^*d_A F = 0$ might not hold -- see Remark \ref{rem:smoothm>1} for more details. Indeed, in the subsequent work \cite{cek2} we treat this question in more detail and prove a positive answer for $n = 2$ and also provide some counterexamples.

\subsection{Uniqueness via Runge approximation.}
Next, we outline our second approach to uniqueness by using Runge-type approximation for elliptic equations to recover holonomy, which we use to prove the stronger statement of uniqueness for arbitrary bundles. In general, Runge approximation is known to be applicable to inverse problems (see e.g. \cite{conformalcalderon, BM13, RS17}).

\begin{theorem}\label{thm:mainthm'}
    Let $(M, g)$ be a smooth compact Riemannian manifold with boundary of dimension $\dim M \geq 2$, $\Gamma \subset \partial M$ a non-empty open set, $E$ and $E'$ Hermitian vector bundles over $M$ such that we have the identification $E|_{\Gamma} = E'|_{\Gamma}$. Let $A$ and $B$ be two smooth unitary Yang-Mills connections on $E$ and $E'$ respectively, such that $\Lambda_A(f)|_{\Gamma} = \Lambda_B(f)|_{\Gamma}$ for all $f \in C_0^\infty(\Gamma; E)$. Then there exists a unitary bundle isomorphism $H:E' \to E$ with $H|_{\Gamma} = Id$, such that $H^*A = B$.
\end{theorem}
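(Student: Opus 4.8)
The plan is to combine the local determination of the connection near $\Gamma$ (already available through Lemma \ref{gauge_constr} and the boundary determination results) with a holonomy-recovery argument built on Runge approximation. First I would note that the hypothesis $E|_\Gamma = E'|_\Gamma$, together with the coincidence of the DN maps on $\Gamma$, lets us invoke the harmonic-type gauge of Lemma \ref{gauge_constr}: in a collar neighbourhood $U$ of $\Gamma$ we build a gauge putting $A$ and $B$ into comparable form, and the unique continuation property in this gauge produces a unitary bundle isomorphism $H_0\colon E'|_U \to E|_U$ with $H_0|_\Gamma = \mathrm{Id}$ and $H_0^*A = B$ on $U$. This reduces the problem to a purely interior statement: two Yang-Mills connections that agree (after a gauge transformation) on an open set, and whose DN data on $\Gamma$ coincide, must be globally gauge equivalent.

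The core of the argument is to recover parallel transport. Fix a path $\gamma$ from a point $p\in\Gamma$ into the interior and cover it by a chain of balls; I want to show that the "solution operators" of the two connection Laplacians, restricted to data supported in $\Gamma$, span a sufficiently rich space of sections along $\gamma$ that holonomy of $A$ along loops is forced to match that of $B$. Concretely, the Runge approximation property for the elliptic operator $d_A^*d_A$ states that solutions of $d_A^*d_A u = 0$ on a subdomain $\Omega' \Subset \Omega$ can be approximated in $L^2_{loc}(\Omega')$ by (restrictions of) solutions on the larger domain $\Omega$ with prescribed boundary behaviour on $\Gamma$; this follows from the Hahn–Banach theorem together with the weak unique continuation property for $d_A^*d_A$, which holds since the operator has scalar principal part. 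Using this, and the fact that $\Lambda_A=\Lambda_B$ on $\Gamma$ implies the two families of "$\Gamma$-solutions" have the same Cauchy data on $\Gamma$, one shows by an integration-by-parts (Alessandrini-type) identity that for any two such solutions $u_A$, $u_B$ the pairing $\int_M \langle u_A, u_B\rangle$ is symmetric in a way that only depends on the shared boundary data. Propagating this along the chain of balls — at each step solving a local Cauchy problem and using Runge to extend — I would transport the identification $H$ from one ball to the next, checking that the transition is unitary and independent of the chosen approximating sequence. The upshot is a well-defined unitary isomorphism $H\colon E'\to E$ over all of $M$ with $H^*A=B$; since $M$ is connected and the construction agrees with $H_0$ near $\Gamma$, we get $H|_\Gamma=\mathrm{Id}$.

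The main obstacle I anticipate is the path-independence (consistency) of the holonomy recovery: a priori the isomorphism produced along $\gamma$ could depend on the homotopy class of $\gamma$, or worse on the particular chain of balls and approximating sequences, so one must show the recovered transport is flat in the appropriate sense. This is exactly where the Yang-Mills hypothesis should not actually be needed for the \emph{isomorphism} itself — Runge approximation recovers holonomy for \emph{any} connection — but it is needed to control regularity and to match the earlier local results; the genuinely delicate point is showing the two holonomy representations $\pi_1$-conjugate via an element that restricts to the identity on the fibre over $p\in\Gamma$, which I would handle by first pinning down the isomorphism on a maximal simply-connected region (e.g. the complement of a cut locus), then checking compatibility across the cuts using that the DN data over $\Gamma$ sees loops through $\Gamma$. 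A secondary technical point is ensuring the approximation can be done with data compactly supported in $\Gamma$ (not all of $\partial M$); this requires the unique continuation from $\Gamma$, i.e. that a solution with vanishing Cauchy data on $\Gamma$ vanishes identically, which again is available because the principal symbol is scalar. Once these consistency issues are settled, assembling the global $H$ and verifying $H^*A=B$, $H|_\Gamma=\mathrm{Id}$ is routine.
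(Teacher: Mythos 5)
Your overall strategy --- use Runge approximation to produce solutions of $\Lapl_A u=0$ with data supported in $\Gamma$ that span the fibres along a curve, and use these to recover parallel transport --- is indeed the route the paper takes (Lemma \ref{lemma:runge}, after extending the manifold near a point of $\Gamma$ as in Lemma \ref{lemma:extension}). However, there is a genuine gap at the heart of your argument: you never explain how the equality of DN maps, which is boundary information, gets propagated along the curve into the interior. In the paper this is done by assembling the spanning solutions into matrices $F=(s_1,\dots,s_m)$ and $G=(r_1,\dots,r_m)$ over a tube $T$ around the curve, observing that $A'=F^*A$ and $B'=G^*B$ each satisfy the elliptic system of Lemma \ref{gauge_constr} (the gauge condition $d^{*}A'=Q_0(x,A')$ \emph{plus} the Yang--Mills equation), and then invoking the unique continuation principle for that system to push the identity $A'=B'$ from a neighbourhood of $p$ along all of $T$. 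Your proposed substitute --- an ``Alessandrini-type'' integration-by-parts pairing $\int_M\langle u_A,u_B\rangle$ --- does not do this job: such identities constrain integrals of solutions against each other, but do not by themselves yield the pointwise first-order relation $dH=HB-AH$ along the curve that is needed to compare parallel transports. Relatedly, your assertion that ``Runge approximation recovers holonomy for \emph{any} connection'' and that Yang--Mills is only needed ``to control regularity'' is incorrect in this framework: the Yang--Mills equation is exactly what makes $A'$ and $B'$ solutions of an elliptic second-order system with diagonal principal part, and hence is what makes the unique continuation step (the propagation along the tube) possible. Dropping it leaves only the first-order gauge condition, for which no such UCP is available, and indeed the general (non-Yang--Mills) problem is open.

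Two further points you leave unaddressed which the paper must (and does) handle: first, the non-degeneracy of the matrix $G$ of $B$-solutions along the tube is not automatic --- the paper deduces it from unitarity of $H=FG^{-1}$ on the set where it is already known to be a gauge, via $|\det G|=|\det F|\neq 0$ (equation \eqref{eq:unitary}); second, your worry about path-independence is resolved much more simply than by cut-locus arguments: once one knows $\mathcal{P}^A_\gamma=\mathcal{P}^B_\gamma$ for \emph{all} loops based at $p\in\Gamma$ (with self-intersecting loops in dimension $2$ reduced to embedded ones by Lemma \ref{lemma:geomlemma}), the parallel transport of $\mathrm{Id}$ in $\mathrm{Hom}(E',E)$ with respect to $\nabla^{\mathrm{Hom}}u=\nabla_A u-u\nabla_B$ is automatically path-independent and furnishes the global unitary isomorphism.
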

%; for $m = 1$, this is obvious by standard results. 
%Another issue is that one needs to prove the UCP for elliptic systems with diagonal principal part and higher orders of degeneracy at a hyperplane. More precisely, operators with leading term $x_n^{2k}\Delta \times Id$ and with first order terms containing multiples of $x_n^k$; in other words, the algebra of operators generated by derivatives of the form $x_n^k \frac{\partial}{\partial x}$ where $x_n$ is the boundary defining coordinate.

%In this paper, a major role is played by the unique continuation results. As a source for the UCP results we will use B\"{a}r \cite{bar}, who proves the rectifiability statements for the zero sets of first order semilinear elliptic systems; for convenience, we prove an easy generalisation of his results for second order systems in Lemma \ref{zeroset}. Furthermore, we apply the degenerate UCP result of Mazzeo \cite{mazzeo}.

This theorem clearly generalises Theorems \ref{linebundle} and \ref{analytic_m>1}. Note in particular that we are able to uniquely determine the topology of the bundles and the Hermitian structures. 

%Recall, the methods of proof differ: geometric analysis and degenerate UCP in the first two theorems and Runge-type arguments in the theorem above.
Let us point out the main differences between the two approaches. Although Theorem \ref{thm:mainthm'} is stronger than the first two theorems, the advantage of the former approach is in its method of proof. More precisely, the geometric analysis technique is minimal with respect to the necessary data -- essentially, we only need one arbitrary measurement to uniquely identify the connections -- see Remark \ref{rem:minimalcondition} for more details. Also, this method is entirely new, so it gives hope that it can be generalised to different settings, such as the metric Calder\'on problem. 

On the other hand, in the Runge-type density approach we need many measurements that concentrate in a suitable sense on closed loops (see Lemma \ref{lemma:runge}). In the context of the Calder\'on problem for connections, this method is also new and gives hope to be generalised to other settings.

%Our methods work also for non-unitary Yang-Mills connections and since this might be of some interest, we state this separately (see Propositions \ref{prop:linebundlenonunitary} and \ref{prop:rungenonunitary}):

%\begin{prop}
%    The conclusions of Theorems \ref{linebundle}, \ref{analytic_m>1} and \ref{thm:mainthm'} hold for general $GL(m, \mathbb{C})$ Yang-Mills connections.
%\end{prop}

%Finally, we prove that the DN map $\Lambda_A$ is an elliptic pseudodifferential operator of order $1$ on the restriction of the vector bundle to the boundary and deduce that its full symbol determines the full Taylor series of the connection, metric and a potential at the boundary. This was first proved in the case of a Riemannian metric by Lee and Uhlmann \cite{leeuhlmann} and later considered in the $m = 1$ case with a connection in \cite{LCW}. In this paper, we generalise this approach to the case of systems and prove the analogous result.

\subsection{Organisation of the paper.} The paper is organised as follows: in the next section, we recall some formulas from differential geometry and a make a few observations about choosing appropriate gauges. In the third section we prove that $\Lambda_A$ is a pseudodifferential operator of order $1$ for systems and prove that its full symbol determines the full jet of $A$ at the boundary. Furthermore, in section four we consider the smooth case and prove the global result for $m = 1$, Theorem \ref{linebundle}. In the same section, we construct the new gauge and deduce the UCP result we need. In section five we consider the $m > 1$ case for analytic metrics, Theorem \ref{analytic_m>1}, by adapting the proof of the line bundle case and exploiting real-analyticity. Finally, in section five we apply the Runge-type approximation property and prove Theorem \ref{thm:mainthm'}.

This paper also has two appendices: in Appendix \ref{app:A} we recall some well-posedness condition for the heat equation and prove a few elementary statements about extending functions smoothly over small sets. Next, in Appendix \ref{app:B} we lay out the technical results needed to prove the Runge type approximation result we need -- this requires some well-posedness for Dirichlet problem in negative Sobolev spaces and a duality argument; the aim is to prove that for $\Lapl_A = d_A^*d_A$, we can build $m$ smooth solutions that span the bundle over a given curve.\\

\noindent \textbf{Acknowledgements.} The author would like to express his gratitude for the support of his supervisor, Gabriel Paternain. Furthermore, he would also like to acknowledge the help of Mikko Salo, Rafe Mazzeo, Colin Guillarmou and Gunther Uhlmann, especially in clarifying the unique continuation results used in the paper. The author thanks Guillaume Bal and Francois Monard for telling him about Runge type approximation results which inspired the last section of the paper. 

He is grateful to the Trinity College for financial support, where the most of this research took place and to Max-Planck Institute for Mathematics in Bonn, where a part of this research took place. 

\section{Preliminaries}

\subsection{Yang-Mills connections.}

As mentioned previously, Yang-Mills (YM) connections are very important in physics and geometry. They satisfy the so called \emph{Yang-Mills equations}, which are considered as a generalisation of Maxwell's equations in electromagnetism and which provide a framework to write the latter equations in a coordinate-free way (see e.g. \cite{ab} or \cite{donaldson} for a geometric overview and definitions). The Yang-Mills connections are critical points of the functional:
%More concretely, let us consider a Hermitian bundle $E$ over a compact oriented Riemannian manifold $(M, g)$ equipped with a unitary connection $A$; we will denote the associated covariant derivative by $d_A = d + A$ and we will think of $A$ as both the connection $1$-form (valued in the endomorphism bundle) in a local trivialisation and the formal object, depending on the context. 
\begin{align*}
F_{YM}(A) = \int_M |F_A|^2 d\omega_g
\end{align*}
Here $F_A = dA + A\wedge A$ is the curvature $2$-form with values in the endomorphism bundle of $E$ determined by the map $d_A^2s = F_A \wedge s$ on sections $s \in C^{\infty}(M; E)$ and $\omega_g$ is the volume form. It can then be shown by considering variations of this functional, that the equivalent conditions for $A$ being its critical point are (the Euler-Lagrange equations):
\begin{align}\label{YM0}
(D_A)^* F_A = 0 \quad \text{ and } \quad D_A F_A = 0
\end{align}
where $D_A = d^{\text{End}}_A$ is the induced connection on the endomorphism bundle, given locally by $D_AS = dS + [A, S]$ or equivalently by $D_AS = [d_A, S]$, where $[\cdot, \cdot]$ denotes the commutator. The second equation in \eqref{YM0} is actually redundant, since it is the Bianchi identity.

Yang-Mills connections clearly generalise \emph{flat} connections, for which the curvature vanishes, i.e. $F_A = 0$. 

%Let us motivate the fact we consider Yang-Mills connections in this paper, by illustrating their significance in other branches of mathematics. 
They have been a point of unification between pure mathematics and theoretical physics, but moreover have brought a few areas of pure mathematics together, such as e.g. PDE theory and vector bundles over complex projective spaces, or algebraic geometry.

\begin{ex}[Yang-Mills connections over Riemann surfaces]\rm
We give an idea of the size of the set of YM connections in the simplest non-trivial example of Riemann surfaces. First recall that connections on bundles modulo gauges are classified by their holonomy representation on the so called loop group modulo conjugation (see Kobayashi and Nomizu \cite{KN63}). In the setting of flat connections, this correspondence simplifies significantly for a Riemann surface $\Sigma$:
\begin{align*}
\big\{\rho: \pi_1(\Sigma) \to U(m)\big\}/\text{conj.} \quad \longleftrightarrow \quad  \{\text{unitary flat bundles of rank } m\}
\end{align*} 
since homotopic loops have the same holonomy. The direct map (going left to right) here is the one taking a representation $\rho$ and defining an associated flat bundle via $\widetilde{\Sigma} \times_\rho \mathbb{C}^m$, where $\widetilde{\Sigma}$ is the universal cover of $\Sigma$ and $\times_\rho$ means we identified the two by the diagonal action. Somewhat surprisingly, we may still obtain a correspondence in the case of YM connections, where $\pi_1(\Sigma)$ is replaced by a certain central extension $\widehat{\pi}_1(\Sigma)$ (see \cite{ab} for more details). This has an analogous geometric interpretation: the difference to the flat case is that we now identify homotopic only if they enclose the same area. In particular, for the sphere $S^2$ this simplifies, so that we have $\widehat{\pi}_1(S^2) = S^1$.
\end{ex}

\subsection{Local expressions for $d_A^*$ and inner products} In general, we use the notation $d_A^*$ to denote the formal adjoint acting on vector valued $p$-forms; if $A$ is unitary, then $d_A^* = (-1)^{(p-1)n + 1}\star d_{A} \star$, where $\star$ is the Hodge star acting $\mathbb{C}$-linearly on differential forms with values in $E$ as $\star(\omega \otimes s) = (\star \omega) \otimes s$, $\omega$ is a differential form and $s$ is a section of $E$.

For the record, we will write down the explicit formula in local coordinates for the inner product on the differential forms with values in $E$. If two $p$-differential forms with values in $E$ are given locally by $\alpha = \sum \alpha_I dx^I$ and $\beta = \sum \beta_J dx^J$ then:\footnote{The factor of $\frac{1}{p!}$ comes from the fact that we want to have $\langle{dx^{i_1} \wedge \dotso \wedge dx^{i_p}, dx^{j_1} \wedge \dotso \wedge dx^{j_p}}\rangle = \det{(g^{i_k j_k})}$.}
\begin{align*}
\langle{\alpha, \beta}\rangle_{\Omega^p(E)} = \frac{1}{p!}g^{i_1j_1}\cdots g^{i_pj_p}\langle{\alpha_{i_1\dotso i_p}, \beta
_{j_1\dotso j_p}}\rangle_E
\end{align*}
Here $\langle{\cdot, \cdot}\rangle_E$ is the inner product in $E$ and $g^{ij}$ denotes the inverse matrix of the metric in local coordinates $g_{ij}$. Moreover, we state the following formula for the adjoint $d^* = (-1)^p \star^{-1} d \star = (-1)^{(p-1)n + 1} \star d \star$, acting on $p$-forms:\footnote{We are assuming that the tensor representing the form is alternating, i.e. we get a minus sign after swapping any two indices.}
\begin{align*}
(d^*\alpha)_{\mu_1 \dotso \mu_{p - 1}} = - g_{\mu_1\nu_1}\cdots g_{\mu_{p-1} \nu_{p-1}}\frac{1}{\sqrt{|\det{g}|}} \partial_\nu \big( \sqrt{|\det{g}|} g^{\nu \lambda} g^{\nu_1 \lambda_1} \cdots g^{\nu_{p-1} \lambda_{p-1}} \alpha_{\lambda \lambda_1 \dotso \lambda_{p-1}} \big)
\end{align*}
We can combine this information along with the condition that $\int{\langle
{d_A^* \alpha, \beta}\rangle_E} = \int{\langle{\alpha, d_A \beta}\rangle_E}$ for all $p$-forms $\beta$ and $(p+1)$-forms $\alpha$, compactly supported in the interior. Then we get:
\begin{align}\label{twistedcodiff}
d_A^* \alpha= d^* \alpha - \sum_{i_1 < \dotso  < i_p}g^{\nu \lambda}A_\nu \alpha_{\lambda i_1 \dotso i_p} dx_{i_1}\wedge \dotso \wedge dx_{i_p}
\end{align}
and as a shorthand we may use $(A, \alpha) = \iota_{A^\sharp}\alpha$ for the sum in the above expression. Here $\sharp$ denotes the isomorphism between $TM$ and $T^*M$ given by contracting the metric $g$ with a vector. The following identity is also very useful:
\begin{equation*}
d^*(f \omega) = fd^*(\omega) - \iota_{\nabla f} (\omega)
\end{equation*}

%We need to emphasise here that, slightly illogically, \emph{even if $A$ is non-unitary} in this thesis we will use the notation $d_A^* = (-1)^{(p-1)n + 1}\star d_{A} \star$, unless otherwise stated.\footnote{The point is that we would like to have $d_{F^*(A)}^* = F^{-1}d_A^*F$, for \emph{all} isomorphisms $F$ of the vector bundle $E$. On the other hand, $F^{-1}d_A^*F$ will be the formal adjoint if we consider the pulled back inner product structure on $E$ by $F$; in general, it will not be a formal adjoint with respect to the standard inner product structure on $E$.} We will apply the same principle to all covariant derivatives that appear in the text.

%Moreover, the above local formula \eqref{twistedcodiff} still holds for this $d_A^*$.

If the connection is not unitary, then the expression $(-1)^{(p-1)n + 1}\star d_{(-A^*)} \star$ gives the formal adjoint in a local trivialisation (we assume the Hermitian structure is trivial) on $p$-forms, where $A^*$ denotes the Hermitian conjugate. However, we still have the formula $d_{A'}^{*'} = F^{-1}d_AF$, where $F$ is a local gauge, $A' = F^*A$ and the adjoint $*'$ is taken w.r.t. the pulled back structure. For all $E$-valued $1$-forms $u$ and unitary $A$:
\begin{align}\label{expansion}
d_A^*d_Au = d^*du + d^*(Au) - (A, du) - (A, Au) 
\end{align}
%, including the covariant derivative on End$E$ which we will denote by $D_A$.

%We need to stress here, that the above expression for the formal adjoint $d_A^*$ in terms of the Hodge star holds $\emph{only if $A$ is unitary}$

\subsection{Fixing gauges} 
%Radial gauge, trivialisation of conns on contractible spaces, Coulomb gauge. 

In many mathematical problems and physical situations there exist certain degrees of freedom called \textit{gauges}. More specifically, in our case a gauge is an automorphism of a vector bundle (preserves its structure); then the gauges act on the affine space of connections on this vector bundle by pullback. Here, we make a few remarks about the possible gauges one could use. 

\begin{ex}[An electromagnetic correspondence]\rm
In physics we use the electromagnetic four-potential to describe the electromagnetic field. This potential can be naturally identified (via musical isomorphism, the inverse of $\sharp$) with a connection $1$-form $A$ on the unitary trivial line bundle over the space-time $\mathbb{R}^4$ in the Minkowski metric, so that the actual electromagnetic field is given by the curvature $F = dA$, which is a tensor consisting of six components; the Maxwell's equations then reduce to $d^*dA = 0$ (see \eqref{YM0}). 

%Recall from classical electromagnetism, if we consider the magnetic potential $\vec{A}$ separately, we would have the field strength $\vec{B} = \nabla \times \vec{A}$.	Then we could transform the potential $\vec{A} \mapsto \vec{A} + \nabla f$ and still get the same answer for $\vec{B}$; similarly, the electric field is invariant under addition of $-\frac{\partial f}{\partial t}$ to the electric potential $\phi$ and the correspondence is $A = A_1dx^1 + A_2dx^2 + A_3dx^3 - \phi dt$. In the connection setting above, we have the analogous invariant transform $A \mapsto A + i df$ for a real function $f$ on $\mathbb{R}^4$, which corresponds to the gauge given by $e^{if}$. This leads to the old physical observation that we do not have a physical meaning of the potential and is a starting point to the Yang-Mills theory which generalises the Maxwell's equations (see \cite{donaldson}).
\end{ex}
There are several gauges that have proved to work well in practise, i.e. that fit well into other mathematical formalism in applications. One of them is the \textit{Coulomb gauge}, which for a connection matrix on a vector bundle, locally asks that $d^*A = 0$\footnote{This is equivalent to $\nabla \circ \vec{A} = 0$ in the case of $\mathbb{R}^3$ considered in the previous paragraph.} The existence of such gauges is proved by Uhlenbeck \cite{uhlenbeck} for vector bundles over unit balls (see also \cite{donaldson}) under a smallness condition on the $L^p$ norm of the curvature (for specific values of $p$), which locally on a manifold we can always assume if we shrink the neighbourhood sufficiently and then dilate to the unit ball. Most importantly, in such a gauge the Yang-Mills connections satisfy an elliptic partial differential equation with the principal, second order term equal to $(dd^* + d^*d) \times Id$, which is clearly elliptic.
%We would like to makes use of this fact in the upcoming considerations.

Another slightly related gauge is the \emph{temporal gauge}, which we will also make use of -- in this gauge, one of the components of the connection vanishes locally (we usually distinguish this variable as ``time"). That is, given a local coordinate system $(x_1, \dotso, x_{n-1}, t) = (x, t)$ defined for $t = 0$ and a connection matrix $A = A_i dx^i + A_t dt$, we may solve:
\begin{align*}
\frac{\partial F}{\partial t}(x, t) + A_t(x, t) F(x, t) = 0 \quad \text{and} \quad F(x, 0) = Id
\end{align*}
parametrically smoothly depending on $x$ (the parallel transport equation). Then by definition near $t = 0$, we have $A' = F^*(A) = F^{-1}dF + F^{-1}AF$ satisfying $A'_t = 0$. In this way we may prove Lemma 6.2 in \cite{cek1}, which we state for convenience, since it will get used frequently throughout the paper:
% (Lemma 6.2 in \cite{cek1})

\begin{lemma}\label{lemma}
Let $A$ and $B$ be two unitary connections on a Hermitian vector bundle $E$ over $M$. Consider the tubular neighbourhood $\partial M \times [0, \epsilon)$ of the boundary for some $\epsilon > 0$ and denote the normal distance coordinate (from $\partial M$) by $t$. Then $B$ is gauge equivalent to a unitary connection $B'$ via an automorphism $F$ of $E$ such that $F|_{\partial M} = Id$ and $(B'-A)(\frac{\partial}{\partial t}) = 0$ in the neighbourhood $\partial M \times [0, \delta)$ of the boundary, for some $\delta > 0$.

In particular, if $E = M \times \mathbb{C}^m$ we have gauges $F$ and $G$ for $A$ and $B$ respectively with $F|_{\partial M} = G|_{\partial M} = Id$, such that $A' = F^*A$ and $B' = G^*B$ satisfy $A'(\frac{\partial}{\partial t}) = B'(\frac{\partial}{\partial t}) = 0$ near the boundary.
\end{lemma}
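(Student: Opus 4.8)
The plan is to construct $F$ by \emph{relative parallel transport} in the normal direction. Work in the collar $\partial M \times [0,\epsilon)$ with normal coordinate $t$, and write $A_t := A(\frac{\partial}{\partial t})$, $B_t := B(\frac{\partial}{\partial t})$ in a local unitary trivialisation. The condition $(F^*B - A)(\frac{\partial}{\partial t}) = 0$ says precisely that, along each normal segment $\gamma_x \colon s \mapsto (x,s)$, the pulled-back connection $F^*B$ has the same parallel transport as $A$. So let $P^A_{x,t}, P^B_{x,t} \colon E_{(x,0)} \to E_{(x,t)}$ denote the parallel transports of $A$ and $B$ along $\gamma_x|_{[0,t]}$, and simply define
\begin{equation*}
F_{(x,t)} := P^B_{x,t} \circ (P^A_{x,t})^{-1} \colon E_{(x,t)} \to E_{(x,t)} .
\end{equation*}
In the chosen trivialisation this is the unique solution of the linear ODE $\partial_t F = F A_t - B_t F$ with $F|_{t=0} = Id$, which exists on all of $[0,\epsilon)$; the invariant description shows it does not depend on the trivialisation, so $F$ is a genuine smooth section of the automorphism bundle $\mathrm{Aut}(E)$ over the collar.

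Next I would verify the required properties. Clearly $F|_{\partial M} = Id$, since $P^A_{x,0} = P^B_{x,0} = Id$. Unitarity of $F$ follows because $A$ and $B$ are metric connections, so $P^A_{x,t}$ and $P^B_{x,t}$ are fibrewise isometries and $F_{(x,t)}$ is a composition of two of them; equivalently $U := F^*F$ solves $\dot U = [U, A_t]$ with $U(0) = Id$, which forces $U \equiv Id$. Consequently $B' := F^*B$ is again a unitary connection. For the gauge condition, the parallel transport of $F^*B$ along $\gamma_x|_{[0,t]}$ equals $F_{(x,t)}^{-1} \circ P^B_{x,t} \circ F_{(x,0)} = P^A_{x,t}$, so $F^*B$ and $A$ agree in the $\frac{\partial}{\partial t}$-direction throughout the collar, i.e.\ $(B' - A)(\frac{\partial}{\partial t}) = 0$ there. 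Smoothness of $F$ jointly in $(x,t)$ is just smooth dependence of linear ODE solutions on parameters.

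Finally, $F$ as built lives only on the collar, so I would promote it to an automorphism of $E$ over all of $M$: since $F|_{t=0} = Id$, $F$ is $C^0$-close to $Id$ on $\partial M \times [0,\delta)$ for $\delta$ small, hence $F = \exp(\xi)$ for a unique small smooth section $\xi$ of the skew-Hermitian endomorphism bundle with $\xi|_{\partial M} = 0$; extend $\xi$ smoothly to all of $M$ (say by a cutoff equal to $1$ near $\partial M$) and re-set $F := \exp(\xi)$, a global unitary automorphism of $E$ agreeing with the old one on $\partial M \times [0,\delta)$. This is the only place the neighbourhood shrinks from $\epsilon$ to $\delta$, and it is purely bookkeeping — there is no essential obstacle in this lemma; the one point deserving a little care is keeping track that the extension does not destroy unitarity or the value near $\partial M$. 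The ``in particular'' statement then follows by applying what was just proved twice with $A$ replaced by the trivial product connection $d$: once to the pair $(d, A)$, producing $F$ with $(F^*A)(\frac{\partial}{\partial t}) = 0$ near $\partial M$, and once to $(d, B)$, producing the analogous $G$ for $B$.
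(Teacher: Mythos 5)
Your proof is correct and follows essentially the same route as the paper, which constructs the gauge by solving the normal parallel-transport ODE (the ``temporal gauge'' construction described just before the lemma); your relative transport $F = P^B\circ(P^A)^{-1}$, i.e.\ $\partial_t F = FA_t - B_tF$, $F|_{t=0}=Id$, is exactly the right generalisation for matching $B'(\partial_t)$ to $A(\partial_t)$, and the cutoff extension off the collar is the standard bookkeeping implicit in the statement's shrinkage from $\epsilon$ to $\delta$.
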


In the situation of this Yang-Mills problem, we would like to use the gauge given by Lemma \ref{lemma} in combination with Lemma \ref{gauge_constr}, because the latter one is intimately tied with the DN map \eqref{DNmap} and allows us to make use of the information packed in the equality $\Lambda_A = \Lambda_B$ for two connections $A$ and $B$.

\section{Boundary determination for a connection and a matrix potential}

In this section, we prove that if we put the connection in a suitable gauge and ``normalise" the metric appropriately, we may determine the full Taylor series of a connection, metric and matrix potential from the DN map on a vector bundle with $m > 1$. The case of $m = 1$ was already considered in \cite{LCW} (Section 8) and we generalise the result proved there. The approach is based on constructing a factorisation of the operator $d_A^*d_A + Q$ modulo smoothing, from which we deduce that $\Lambda_{g, A, Q}$ is a pseudodifferential operator of order one whose full symbol determines the mentioned Taylor series.

\subsection{PDOs on vector bundles.}
Before going into proofs, let us briefly lay out some of the notation that goes into pseudodifferential operators on vector bundles over manifolds (see \cite{ML89, treves, Shu01} for more details). The local symbol calculus developed for scalar operators carries over to the case of vector bundles, as can be seen from the above references.%Firstly, the local symbol calculus and the semiclassical symbol calculus that we developed in Subsection \ref{subsec2.3.3} and Subsection \ref{sec2.3} carries over to matrix valued symbols (starting from \eqref{symboldef}); in particular, the asymptotic summation properties clearly generalise to this case. 

So given $X \subset \mathbb{R}^n$ open, $k, l \in \mathbb{N}$ and $m \in \mathbb{R}$, we have the left symbol classes (and more generally, $(x,y)$-dependant symbols) $S^{m}(X; \mathbb{C}^{lk})$ of $l$ by $k$ matrices, whose entries are symbols in $S^{m}(X)$ -- this symbol class yields a map $A: C_{0}^\infty(X, \mathcal{C}^k) \to C^\infty(X, \mathbb{C}^l)$ via the formula 
\begin{align*}
Au(x) = (2\pi)^{-n} \int \int e^{i(x - y) \cdot \xi} a(x, \xi) u(y) dy d\xi
\end{align*}
We say $A$ belongs to the class $\Psi^{m}(X; \mathbb{C}^{lk})$ and is PDO of order $m$; we also say $A$ is \emph{classical} if its symbol is a sum of positive homogeneous symbols. % We will also say that $A$ is $\mathbb{C}^{lk}$-valued PDO on $X$.

Then given a Riemannian manifold $M$ and vector bundles $E$ and $F$ over $M$, we say that a linear map $A: C_{0}^\infty(M; E) \to C^\infty(M; F)$ is a PDO of order $m$ if for all charts and trivialisations of $E$ and $F$ over this chart, the induced map in the local chart is in $\Psi^{m}$. We write $A \in \Psi^{m}(M; E, F)$ for the space of PDOs of order $m$ and define the space of \emph{smoothing} operators $\Psi^{-\infty}(M; E, F) = \cap_{m} \Psi^{m}(M; E, F)$; we will abbreviate $\Psi^{m}(X; E) := \Psi^{m}(X; E, E)$. Such an operator extends by duality to a map $A: \mathcal{E}'(X, E) \to \mathcal{D}'(X, F)$ (the transpose ${}^{t}A$ is defined by taking the transpose of the symbol $a$ and swapping $x$ and $y$).

%For the ordinary (not $h$-dependent) PDO theory over vector bundles, we may just formally ``erase" the $h$-dependence -- it is clear enough how this theory can be developed. In fact, for this chapter we will specialise in the ordinary PDO theory.
When $M$ is closed, it is standard that $A \in \Psi^m$ maps the Sobolev space of sections in $H^s$ to $H^{s-m}$.

Care should be taken when considering the composition calculus, since commutation properties of matrices jumps into play. More precisely, we have the following composition formula (see the proof of Theorem 4.3 in \cite{treves}), which computes the symbol $c$ modulo $S^{-\infty}$ of the composition $C = A \circ B$ of two matrix valued pseudodifferential operators $A$ ($k$ by $l$) and $B$ ($l$ by $r$) with symbols $a$ and $b$, respectively:

\begin{align}\label{symbolcomposition}
c(x, \xi) \sim \sum_\alpha \frac{1}{\alpha!} \partial_\xi^\alpha a(x, \xi) D_x^\alpha b(x, \xi)
\end{align}
Finally, we remark that the globally defined principal symbol of a PDO $A \in \Psi^{m} (M; E, F)$ is a well-defined element of the quotient
\[\sigma_{m}(A) \in S^{m}\Big(M; \text{Hom}\big(\pi^*(E), \pi^*(F)\big)\Big)/S^{m-1}\Big(M; \text{Hom}\big(\pi^*(E), \pi^*(F)\big)\Big)\]
where $\pi: T^*M \to M$ denotes the projection, $\pi^*$ is the pullback and Hom is the homomorphism bundle.
 
%For other details and the development of the theory of pseudodifferential operators on manifolds and vector bundles we refer the reader to Chapter 1 of Treves \cite{treves}.

\begin{rem}\label{rem1}\rm One of the things that fails to hold for matrix pseudodifferential operators and holds for scalar ones, is that commutation decreases degree of the operator by one. However, the following formula still holds if $c$ denotes the symbol of $C = [A, B]$ (commutator bracket) and $a \in S^{m}(X; \mathbb{C}^{l^2})$, $b \in S^{m'}(X; \mathbb{C}^{l^2})$ are the symbols of $A$, $B$, respectively:
\begin{align*}
c(x, \xi) = [a, b](x, \xi) + \frac{h}{i}\{a, b\}(x, \xi) \quad \text{modulo } S^{m + m' - 2}
\end{align*}
where $\{a, b\}(x, \xi) = \sum_{j = 1}^n \big(\frac{\partial a}{\partial \xi_j} \frac{\partial b}{\partial x^j} - \frac{\partial b}{\partial \xi_j} \frac{\partial a}{\partial x^j}\big)$ denotes the matrix valued Poisson bracket.
\end{rem}

\subsection{Boundary determination.}
We are now ready for the main proofs -- we assume that $(M, g)$ is a compact $n$-dimensional manifold with non-empty boundary $N = \partial M$ and $E = M \times \mathbb{C}^m$ a Hermitian vector bundle with a unitary connection $A$ and $Q$ an $m \times m$ matrix whose entries are smooth functions. We will be working in semigeodesic coordinates near $\partial M$ and we denote by $x^n$ the normal coordinate and by $x' = (x^1, x^2, \dotso x^{n-1})$ the local coordinates in $\partial M$. Furthermore, we have in these coordinates that $g = \sum_{\alpha, \beta} g_{\alpha \beta}(x) dx^\alpha dx^\beta + (dx^n)^2$; also, in what follows the summation convention will be used to sum over repeated indices and when using Greek indices $\alpha$ and $\beta$, the summation will always be assumed to go over $1, \dotso, n-1$. We use the notation $D_{x^j} = -i\partial_{x^j} = -i\frac{\partial}{\partial x^j}$ and $|g| = \det{(g_{ij})} = \det{(g_{\alpha \beta})}$. We start by proving an analogue of Lemma 8.6 in \cite{LCW} and Proposition 1.1 in \cite{leeuhlmann}. %Here it goes:

\begin{lemma}\label{factorisationlemma}
Let us assume $A$ satisfies condition \eqref{cond2}. There exists a $\mathbb{C}^{m \times m}$-valued pseudodifferential operator $B(x, D_{x'})$ of order one on $\partial M$, depending smoothly on $x^n \in [0, T]$ for some $T>0$, such that the following factorisation holds:
\begin{align}\label{factorisation}
d_A^*d_A + Q = (D_{x^n} \times Id + i E(x) \times Id - iB(x, D_{x'}))(D_{x^n} \times Id + iB(x, D_{x'}))
\end{align}
modulo smoothing, where $E(x) = -\frac{1}{2}g^{\alpha \beta}(x) \partial_{x^n} g_{\alpha \beta}(x)$.
\end{lemma}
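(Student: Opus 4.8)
The plan is to construct $B(x, D_{x'})$ by a symbol recursion, following the classical factorisation scheme of Lee--Uhlmann but carefully tracking the non-commutativity of the matrix symbols as flagged in Remark \ref{rem1}. First I would write out the operator $d_A^*d_A + Q$ in the semigeodesic coordinates. Using \eqref{expansion} together with the local formula \eqref{twistedcodiff} for $d_A^*$ and the fact that $g = g_{\alpha\beta}dx^\alpha dx^\beta + (dx^n)^2$, one finds that the second-order part is $-\partial_{x^n}^2 \times Id$ plus a tangential second-order operator $g^{\alpha\beta}\partial_{x^\alpha}\partial_{x^\beta}\times Id$, the first-order part in $\partial_{x^n}$ has coefficient $-E(x)\times Id$ (from differentiating $\sqrt{|g|}$, exactly as in the scalar case) together with $2A_n\partial_{x^n}$ from the connection, and the remaining terms are tangential of order $\le 1$. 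Rewriting everything with $D_{x^j} = -i\partial_{x^j}$, the operator takes the form
\begin{align*}
d_A^*d_A + Q = D_{x^n}^2\times Id + i E(x) D_{x^n}\times Id + \text{(lower order in } D_{x^n}) + Q_2(x, D_{x'}),
\end{align*}
where $Q_2$ collects the tangential operator of order $2$ plus terms involving $A$ and $Q$. Expanding the right-hand side of \eqref{factorisation} and matching powers of $D_{x^n}$, the $D_{x^n}^2$ and $iE\, D_{x^n}$ terms match automatically, the cross terms reproduce the first-order-in-$D_{x^n}$ contributions provided we also absorb the $A_n$-contribution (this is why condition \eqref{cond2} is imposed — presumably it puts $A$ in a gauge where $A_n = 0$, i.e. the temporal gauge of Lemma \ref{lemma}, so that there is no $D_{x^n}$-coupling left to reconcile), and the $D_{x^n}$-free part yields the \emph{Riccati-type operator equation}
\begin{align*}
B^2 + \frac{1}{i}[D_{x^n}, B] - E B = Q_2(x, D_{x'}) \pmod{\Psi^{-\infty}},
\end{align*}
where $[D_{x^n},B]$ just means $-i\partial_{x^n}B$ acting coefficientwise on the symbol.

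Next I would solve this equation at the level of full symbols. Write $b(x,\xi') \sim \sum_{j\le 1} b_j(x,\xi')$ with $b_j$ homogeneous of degree $j$ in $\xi'$, and use the composition formula \eqref{symbolcomposition} to expand the symbol of $B^2$ as $\sum_\alpha \frac{1}{\alpha!}\partial_{\xi'}^\alpha b\, D_{x'}^\alpha b$. Collecting terms homogeneous of degree $2$ gives $b_1^2 = $ (principal symbol of $Q_2$) $= |\xi'|_g^2\, Id =: q_2$, so $b_1 = |\xi'|_g\, Id$, the positive square root — note $b_1$ is scalar, which is what makes the recursion go through cleanly despite non-commutativity. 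At degree $1$: $b_1 b_0 + b_0 b_1 + (\text{lower composition terms with } b_1) - \partial_{x^n}b_1/i \cdot 0 - E b_1 = q_1$, and since $b_1$ is a scalar multiple of $Id$, the term $b_1 b_0 + b_0 b_1 = 2|\xi'|_g b_0$ is invertible, so $b_0$ is determined. Inductively, at degree $1-k$ one gets $2|\xi'|_g b_{-k} = (\text{expression in } b_1,\dots,b_{1-k+1}, E, Q, \text{their derivatives})$, which is solvable because the leading operator is multiplication by the scalar $2|\xi'|_g$. This determines $b$ modulo $S^{-\infty}$, hence $B$ modulo $\Psi^{-\infty}$, smoothly in $x^n\in[0,T]$ for $T$ small enough that the semigeodesic coordinates are valid; one assembles $b \sim \sum b_j$ by Borel summation.

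Finally I would verify ellipticity and close up. The operator $D_{x^n}\times Id + iB$ has principal symbol $(\xi_n + i|\xi'|_g)Id$, and the product with $D_{x^n}\times Id + iE\times Id - iB$, whose principal symbol is $(\xi_n - i|\xi'|_g)Id$ modulo lower order, reproduces $(\xi_n^2 + |\xi'|_g^2)Id$, matching the principal symbol of $d_A^*d_A$; so the two sides of \eqref{factorisation} agree to top order by construction and, by the symbol recursion, to all orders modulo $\Psi^{-\infty}$. The main obstacle I anticipate is purely bookkeeping: correctly handling the non-commutative composition terms $\frac{1}{\alpha!}\partial_{\xi'}^\alpha b\, D_{x'}^\alpha b$ in the Riccati equation (the scalar proof hides these), and making sure the $A$- and $Q$-dependent zeroth-order-in-$D_{x^n}$ terms really do land in $Q_2$ with no leftover $D_{x^n}$-coupling — this is exactly what condition \eqref{cond2} should guarantee, so a careful check that that gauge eliminates the obstruction is the one genuinely delicate point. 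Everything else is the standard parametrix-type induction, now with matrix coefficients but a scalar principal part, which is what keeps the inductive step solvable.
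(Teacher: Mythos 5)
Your proposal follows essentially the same route as the paper: expand $d_A^*d_A+Q$ in semigeodesic coordinates (with \eqref{cond2} killing the $A_nD_{x^n}$ coupling), reduce \eqref{factorisation} to a Riccati-type operator equation, and solve it at the symbol level by a homogeneity recursion that closes because the principal symbol is a scalar multiple of the identity. Two small caveats: the paper deliberately takes the \emph{negative} root $b_1=-\sqrt{q_2}\times Id$ (either sign yields a valid factorisation for this lemma, but the negative sign is what makes the backwards heat-equation/well-posedness argument in Proposition \ref{PDO} work, which is why the paper flags it); and in your degree-one equation the term $\partial_{x^n}b_1$ coming from $i[D_{x^n}\times Id,B]$ is \emph{not} zero — $g^{\alpha\beta}$ depends on $x^n$ — and must be kept, as in \eqref{b0}.
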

\begin{proof}
First of all, we have that:
\begin{align}\label{expansion'}
(d_A^*d_A + Q)u = \Delta_g (u) - 2g^{jk}A_j\frac{\partial u}{\partial x^k} + (d^*A)u -  g^{jk}A_jA_ku + Qu
\end{align}
where $A = A_i dx^i$. Furthermore, we have 
\begin{align*}
\Delta_g = D_{x^n}^2 + iED_{x^n} + Q_1 + Q_2
\end{align*}
where 
\begin{multline*}
Q_1(x, D_{x'}) = -i\big(\frac{1}{2}g^{\alpha \beta}(x) \partial_{x^\alpha} \log{|g|}(x) + \partial_{x^\alpha} g^{\alpha \beta} (x)\big)D_{x^\beta} \quad \text{ and } \quad Q_2(x, D_{x'})\\ = g^{\alpha \beta} D_{x^\alpha} D_{x^\beta}
\end{multline*}
We denote the symbols of $Q_1$ and $Q_2$ by $q_1$ and $q_2$ respectively and define $G = (d^*A) - g^{\alpha \beta}A_\alpha A_\beta + Q$. Thus by using \eqref{expansion'}, we can rewrite \eqref{factorisation} as
\begin{align*}
B^2  - EB + i[D_{x^n} \times Id, B] = Q_1 \times Id + Q_2 \times Id - 2g^{\alpha \beta} A_\alpha \partial_{x^\beta} + G
\end{align*}
modulo smoothing. Moreover, by taking symbols we obtain:
\begin{align}\label{symbol1}
\sum_{\alpha \geq 0} \frac{1}{\alpha !} \partial_{\xi'}^\alpha b D_{x'}^\alpha b - Eb + \partial_{x^n}b - q_1 \times Id - q_2 \times Id + 2ig^{\alpha \beta}A_\alpha \xi_\beta - G = 0
\end{align}
modulo $S^{-\infty}$, where $b$ is the symbol of $B$ and we have used \eqref{symbolcomposition} and Remark \ref{rem1}. Let us put $b(x, \xi') = \sum_{j \leq 1} b_j(x, \xi')$, where $b_j$ is homogeneous of order $j$ in $\xi'$. We may then determine $b_j$ inductively, starting from degree two in \eqref{symbol1}:
\begin{align}\label{b1}
(b_1)^2 = q_2
\end{align}
so we may set $b_1 = -\sqrt{q_2} \times Id$ (this sign will be important later) and $q_2 = g^{\alpha \beta} \xi_\alpha \xi_\beta$. Next, we have:
\begin{align}\label{b0}
b_0 &= \frac{1}{2\sqrt{q_2}}\Big(\partial_{x^n}b_1 - Eb_1 - q_1 \times Id + 2ig^{\alpha \beta} A_\alpha \xi_\beta + \nabla_{\xi'} b_1 \cdot \nabla_{x'}b_1\Big)\\\label{b-1}
b_{-1} &= \frac{1}{2\sqrt{q_2}}\Big(\partial_{x^n} b_0 - Eb_0 - G + \sum_{0 \leq j, k \leq 1, \text{ } j + k = |K|} \frac{\partial_{\xi'}^K b_j D_{x'}^K b_{|K| - j}}{K!}\Big)\\
b_{m-1} &= \frac{1}{2\sqrt{q_2}}\Big(\partial_{x^{n}}b_m - Eb_m + \sum_{m \leq j, k \leq 1, \text{ } j + k = |K| + m} \frac{\partial_{\xi'}^K b_j D_{x'}^K b_{k}}{K!}\Big)\label{bm}
\end{align}
where the last equation holds for all $m \leq -1$. Therefore we obtain $b \in S^1$ and hence $B \in \Psi^1$ as well, such that \eqref{factorisation} holds.
\end{proof}

We have established the existence of the factorisation \eqref{factorisation} and now it is time to use it to prove facts about the DN map. The following claim is analogous to Proposition 1.2 in \cite{leeuhlmann} -- the main difference is that now we are using matrix valued pseudodifferential operators, so we need to make sure that appropriate generalisations hold.

\begin{prop}\label{PDO}
The DN map $\Lambda_{g, A, Q}$ is a $\mathbb{C}^{m \times m}$-valued pseudodifferential operator of order one on $\partial M$ and satisfies $\Lambda_{g, A, Q} \equiv -B|_{\partial M}$ modulo smoothing.
\end{prop}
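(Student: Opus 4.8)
The plan is to adapt the classical parametrix construction of Lee and Uhlmann \cite{leeuhlmann} (carried out for $m = 1$ with a connection in \cite{LCW}) to the matrix-valued setting. Using the factorisation \eqref{factorisation} one builds an approximate Poisson operator for the Dirichlet problem \eqref{DNmap} and then reads off $\Lambda_{g, A, Q}$ from its covariant normal derivative along $\partial M$. The only genuinely new feature compared with the scalar case is that all symbols are now matrices which need not commute, so the order of composition in \eqref{symbolcomposition} must be respected throughout; however, the single delicate such computation, namely the factorisation itself, has already been carried out in Lemma \ref{factorisationlemma}. Throughout, ``smoothing'' abbreviates ``modulo $\Psi^{-\infty}$'', and we take for granted --- as is implicit in the very definition of $\Lambda_{g, A, Q}$ --- that $0$ is not a Dirichlet eigenvalue of $d_A^*d_A + Q$; for $Q = 0$ this follows from the energy identity $\langle d_A^*d_A u, u\rangle = \|d_A u\|^2$ together with the fact that a parallel section vanishing on $\partial M$ is zero.

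\emph{Step 1: the approximate Poisson operator.} First I would construct a family $V(x^n) \in \Psi^0(\partial M; E|_{\partial M})$, depending smoothly on $x^n \in [0, T]$, with $V(0) = Id$ and
\[
(D_{x^n} \times Id + iB(x, D_{x'}))\, V(x^n) \equiv 0 \quad \text{modulo } \Psi^{-\infty},
\]
smoothly in $x^n$. Writing the symbol of $V$ as $v \sim \sum_{j \leq 0} v_j$ with $v_j$ homogeneous of degree $j$ in $\xi'$ and expanding via \eqref{symbolcomposition}, the leading equation is the linear matrix ODE $\partial_{x^n} v_0 = b_1 v_0$ with $v_0|_{x^n = 0} = Id$; since $b_1 = -\sqrt{q_2} \times Id$ carries the \emph{good} sign fixed in \eqref{b1} (with $\sqrt{q_2} = |\xi'|_g$), its solution is the scalar multiple $e^{-x^n |\xi'|_g}\, Id$, and each lower-order $v_j$ then satisfies an inhomogeneous linear matrix ODE in $x^n$ with the same scalar integrating factor, hence is obtained by quadrature. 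In particular each $v_j(x^n, \cdot)$ is rapidly decreasing in $\xi'$ for $x^n > 0$, so $V(x^n) \in \Psi^{-\infty}$ for every $x^n > 0$, with smooth dependence on $x^n$ away from $0$.

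\emph{Step 2: the exact Dirichlet solution.} Fix $f \in C^\infty(\partial M; E|_{\partial M})$ and a cutoff $\chi = \chi(x^n)$, equal to $1$ near $x^n = 0$ and supported in the collar, and set $\tilde v := \chi(x^n)\, V(x^n) f \in C^\infty(M; E)$, so that $\tilde v|_{\partial M} = f$. The function $h := (d_A^*d_A + Q)\tilde v$ then depends smoothingly on $f$: where $\chi \equiv 1$ we have, by \eqref{factorisation} and modulo a term smoothing in $f$,
\[
h = (D_{x^n} \times Id + iE \times Id - iB)\big((D_{x^n} \times Id + iB)V\big)f,
\]
which is smoothing in $f$ since $(D_{x^n} \times Id + iB)V$ is smoothing by Step 1; and on the region $\{\chi \not\equiv 1\}$, which lies in $\{x^n > 0\}$, the section $\tilde v$ is itself smoothing in $f$ because $V(x^n) \in \Psi^{-\infty}$ there. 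I would then solve $(d_A^*d_A + Q)w = -h$ with $w|_{\partial M} = 0$; elliptic regularity gives $w \in C^\infty(M; E)$ and $f \mapsto w$ smoothing, so that $u := \tilde v + w$ solves $(d_A^*d_A + Q)u = 0$ with $u|_{\partial M} = f$ and, by uniqueness, is the solution defining $\Lambda_{g, A, Q}(f)$.

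\emph{Step 3: reading off the symbol.} Under the gauge condition \eqref{cond2} one has $A(\partial_{x^n}) = 0$ near $\partial M$ and the outward unit normal is $\nu = -\partial_{x^n}$, whence $\Lambda_{g, A, Q}(f) = d_A u(\nu)|_{\partial M} = -\partial_{x^n} u|_{x^n = 0}$. Since $\chi \equiv 1$ near $x^n = 0$ we get $\partial_{x^n} \tilde v|_{x^n = 0} = (\partial_{x^n} V)(0) f$; writing $(D_{x^n} \times Id + iB)V = R$ with $R$ smoothing gives $\partial_{x^n} V = BV + iR$, so $(\partial_{x^n} V)(0) \equiv B(x', 0, D_{x'}) = B|_{\partial M}$ modulo $\Psi^{-\infty}$, while $\partial_{x^n} w|_{x^n = 0}$ is smoothing in $f$ by Step 2. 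Therefore $\Lambda_{g, A, Q} \equiv -B|_{\partial M}$ modulo $\Psi^{-\infty}(\partial M; E|_{\partial M})$, and since $B|_{\partial M} \in \Psi^1(\partial M; E|_{\partial M})$ by Lemma \ref{factorisationlemma} this shows in particular that $\Lambda_{g, A, Q}$ is a pseudodifferential operator of order one. I expect the main obstacle to be Step 1: solving the symbol transport equations for a \emph{matrix-valued} $B$ while keeping the order of the matrix factors in \eqref{symbolcomposition} correct, and verifying that the error and correction terms are smoothing \emph{uniformly in} $x^n \in [0, T]$ --- everything else being a careful but routine transcription of the scalar argument of \cite{leeuhlmann}.
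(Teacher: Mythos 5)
Your argument is correct in outline, but it takes a genuinely different route from the paper. The paper works \emph{forwards} from the exact solution: it applies the first factor $(D_{x^n}\times Id + iB)$ to $u$, observes that the resulting $v$ solves a backwards generalised heat equation $\partial_t v - (B - E\times Id)v = -ih$, and invokes the well-posedness theory of Treves (Condition \ref{condtreves}, verified for $B - E\times Id$ in Lemma \ref{WPHEQN}) to conclude that the solution operator, hence $f \mapsto v|_{\partial M}$, is smoothing; no Poisson operator is ever constructed. You instead build an approximate Poisson operator $V(x^n)$ from the factor $(D_{x^n}\times Id + iB)$ and correct it by solving a Dirichlet problem with smoothing data. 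Both arguments hinge on the same sign choice $b_1 = -\sqrt{q_2}$: in the paper it makes the heat equation well-posed, in yours it makes the leading symbol $e^{-x^n|\xi'|_g}$ decay. What your approach buys is a semi-explicit solution operator and independence from the generalised heat equation machinery of \cite{treves}; what it costs is Step 1, where your phrase ``$v_j$ homogeneous of degree $j$ in $\xi'$'' is not literally accurate --- $e^{-x^n|\xi'|_g}$ and the subsequent quadratures are not homogeneous in $\xi'$ alone, but belong to a Poisson-type symbol class with estimates of the form $|\partial_{x^n}^k\partial_{\xi'}^\alpha v_j| \lesssim \langle\xi'\rangle^{j+k-|\alpha|}e^{-cx^n|\xi'|}$ uniformly on $[0,T]$; with that (standard) adjustment the transport equations, the uniform smoothing estimates, and the rest of your Steps 2--3 go through, and your sign bookkeeping ($\nu = -\partial_{x^n}$, $A_n = 0$ by \eqref{cond2}) agrees with the paper's conclusion $\Lambda_{g,A,Q} \equiv -B|_{\partial M}$. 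Note also that both proofs equally require $0$ not to be a Dirichlet eigenvalue of $d_A^*d_A + Q$, which you correctly flag as implicit in the definition of the DN map.
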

\begin{proof}
Assume without loss of generality that $A$ satisfies condition \eqref{cond2} (see the paragraph after this Proposition). Let us take $f \in H^{\frac{1}{2}}(\partial M; \mathbb{C}^m)$ and $u \in \mathcal{D}'(M; \mathbb{C}^m)$ that solves the Dirichlet problem $\mathcal{L}_{A, Q}u = 0$ with $u|_{\partial M} = f$. Then by Lemma \ref{factorisationlemma} we obtain the following equivalent local system:
\begin{align}
(D_{x^n}\times Id + iB)u &= v \quad \text{with} \quad u|_{x^n = 0} = f\\
(D_{x^n}\times Id +iE \times Id - iB)v &= h \in C^\infty([0, T] \times \mathbb{R}^{n-1}; \mathbb{C}^m) \label{eqn4}
\end{align}
for some $T>0$ and a local coordinate system $x' = (x^1, \dotso, x^{n-1})$ at $\partial M$. By \eqref{eqn4} and Remark 1.2 from Treves \cite{treves}, we may furthermore assume that $u \in C^\infty([0, T]; \mathcal{D}'(\mathbb{R}^{n-1}; \mathbb{C}^m))$.

Then writing $t = T - x^n$, we may view the equation \eqref{eqn4} as backwards generalised heat equation:
\begin{align*}
\partial_tv - (B - E \times Id)v = -ih
\end{align*}
and by standard elliptic interior regularity we obtain that $u$ is smooth and hence, so is $v|_{x^n = T}$. Since the principal symbol of $B$ is negative, by Lemma \ref{WPHEQN} it satisfies Condition \ref{condtreves} (the basic hypothesis of a well-posed heat equation -- see Section \ref{app:A} for more details) and so the solution operator for this equation is smoothing by Theorem 1.2 in Chapter 3 of \cite{treves}. Thus $v \in C^{\infty}([0, T] \times \mathbb{R}^{n-1}; \mathbb{C}^m)$.

Let us set $Rf := v|_{\partial M}$ -- the above argument shows $R$ is a smoothing operator and also $D_{x^n}u|_{\partial M} = -iBu|_{\partial M} + Rf$. Therefore $\partial_{x^n}u|_{\partial M} \equiv Bu|_{\partial M}$ modulo smoothing, which proves the claim.
\end{proof}

The final step in this procedure is to express the Taylor series of $g$, $A$, $q$ in terms of the symbols $\{b_j \mid j \leq 1 \}$ that we obtained in Proposition \ref{PDO}. However, before proving such a result, we need to ``normalise" the metric and the connection -- here we refer to our Lemma \ref{lemma} and to Lemma 2.1 (b) from \cite{conformalcalderon}: there exists an automorphism $F$ of $E$ such that $F|_{\partial M} = Id$ and a positive function $c$ on $M$, with $c|_{\partial M} = 1$ and $\partial_\nu c|_{\partial M} = 0$ ($\nu$ is the outer normal) such that $\tilde{A} = F^*(A)$ and $\tilde{g} = c^{-1}g$ satisfy:
\begin{align}\label{cond1}
\tilde{\partial}_{x^n}^j(\tilde{g}_{\alpha \beta} \tilde{\partial}_{x^n}\tilde{g}^{\alpha \beta})(x', 0)& = 0 \quad \text{for} \quad j \geq 1\\
\tilde{A}_n (x', \tilde{x}_n)&= 0 \label{cond2}
\end{align}
where by $(x', \tilde{x}^n)$ we have denoted the $\tilde{g}$-boundary normal coordinates and $\tilde{\partial}_{x^n}$ denotes $\partial_{\tilde{x}^n}$; \eqref{cond2} holds for all sufficiently small $\tilde{x}^n$, i.e. in a neighbourhood of the boundary. Also notice that the condition \eqref{cond1} is equivalent to $\mathcal{L}^j_{\tilde{N}} \tilde{H}|_{\partial M} = 0$ for $j \geq 1$, as stated in \cite{conformalcalderon}; here $\tilde{N} = \tilde{\partial}_{x^n}$, $\mathcal{L}$ is the Lie derivative and $\tilde{H}$ is the mean curvature of the hypersurfaces given by setting $\tilde{x}^n$ equal to constant. Then by the invariance property of the DN map, we have $\Lambda_{g, A, Q} = \Lambda_{\tilde{g}, \tilde{A}, \tilde{Q}}$ for $Q_c = c^{\frac{n-2}{4}} \Delta_g(c^{-\frac{n-2}{4}}) \times Id$ and $\tilde{Q} = c(F^{-1}QF + Q_c) = c(F^*(Q) + Q_c)$. We will call a triple $\{g, A, Q\}$ that satisfies conditions \eqref{cond1} and \eqref{cond2} \emph{normalised}. Moreover, we will use the notation $f_1 \simeq f_2$ to denote that $f_1$ and $f_2$ have the same Taylor series (as in \cite{LCW}).

\begin{theorem}\label{boundarydet}
Assume $M$ satisfies $\dim{M} = n \geq 3$ and the triple $\{g, A, Q\}$ is normalised. Let $W \subset \partial M$ open, with a local coordinate system $\{x^1, \dotso, x^{n-1}\}$ and let $\{b_j \mid j \leq 1 \}$ denote the full symbol of $B$ (see Lemma \ref{factorisationlemma}) in these coordinates. At any point $p \in W$, the full Taylor series of $g$, $A$ and $Q$ can be determined by the symbols $\{b_j\}$ by an explicit formula.

In particular, if $\Lambda_{g_1, A_1, Q_1} = \Lambda_{g_2, A_2, Q_2}$ and we assume that $\{g_i, A_i, Q_i\}$ are normalised for $i = 1, 2$, then $g_1 \simeq g_2$, $A_1 \simeq A_2$ and $Q_1 \simeq Q_2$. Moreover, if $\Lambda_{g_1, A_1, Q_1} = \Lambda_{g_2, A_2, Q_2}$ and $g_1 \simeq g_2$ on all of $\partial M$, then we also have $\tilde{A}_1 \simeq \tilde{A}_2$ and $\tilde{Q}_1 \simeq \tilde{Q}_2$, for $\tilde{A}_i = F_i^*(A_i)$ and $\tilde{Q}_i = F_i^*(Q_i)$ for $i = 1, 2$; here $F_i$ are automorphisms of $E$ satisfying $F_i|_{\partial M} = Id$ and such that $\tilde{A}_i$ satisfy condition \eqref{cond2} for $i = 1, 2$.
\end{theorem}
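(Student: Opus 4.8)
The plan is to reduce everything to an inductive extraction of Taylor coefficients from the recursion \eqref{b1}--\eqref{bm} for the symbols $b_j$ of $B$, exactly as in Lee--Uhlmann \cite{leeuhlmann} and \cite{LCW}, but keeping track of the matrix structure carefully. By Proposition \ref{PDO} the DN map determines $\{b_j \mid j \leq 1\}$ up to smoothing, so it suffices to invert the recursion. First I would read off from \eqref{b1} that $b_1 = -\sqrt{q_2}\times Id$ with $q_2 = g^{\alpha\beta}(x',x^n)\xi_\alpha\xi_\beta$; homogeneity in $\xi'$ lets us recover $g^{\alpha\beta}$, hence $g_{\alpha\beta}$, at every point of the normal line $\{x'\}\times[0,T]$, so in particular all normal derivatives $\partial_{x^n}^k g_{\alpha\beta}(x',0)$ are determined. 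Since the coordinates are $\tilde g$-boundary normal coordinates ($g_{nn}=1$, $g_{n\alpha}=0$), this pins down the full Taylor series of $g$ at $p$.

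Next I would turn to \eqref{b0}. The terms $\partial_{x^n}b_1$, $Eb_1$, $\nabla_{\xi'}b_1\cdot\nabla_{x'}b_1$ and the scalar $q_1$ are already known from $g$, so the equation determines $ig^{\alpha\beta}A_\alpha\xi_\beta$, i.e. the $\xi'$-linear part of $b_0$; its coefficients give $A_\alpha(x',x^n)$ for $\alpha=1,\dots,n-1$ along the whole normal segment, and the normalisation \eqref{cond2} gives $A_n\equiv 0$. Hence the full Taylor series of $A$ at $p$ is recovered. Then \eqref{b-1} determines $G = (d^*A)-g^{\alpha\beta}A_\alpha A_\beta + Q$ along the normal segment (every other term in that equation is now known), and since $g$ and $A$ are known we solve for $Q$, again along the whole segment, so its Taylor series at $p$ follows. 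The lower-order equations \eqref{bm} for $m\leq -1$ are then automatically consistent and need not be used. This proves the ``explicit formula'' part; since $b_j$ is determined by the DN map modulo smoothing and the extraction is local, it also gives $g_1\simeq g_2$, $A_1\simeq A_2$, $Q_1\simeq Q_2$ when the two DN maps agree.

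For the last assertion, suppose $\Lambda_{g_1,A_1,Q_1}=\Lambda_{g_2,A_2,Q_2}$ and $g_1\simeq g_2$ on all of $\partial M$. The subtlety is that the normalisation procedure is applied separately to each data set: one uses Lemma \ref{lemma} together with Lemma 2.1(b) of \cite{conformalcalderon} to produce $F_i$ and conformal factors $c_i$ so that $\tilde g_i=c_i^{-1}g_i$ satisfies \eqref{cond1} and $\tilde A_i=F_i^*A_i$ satisfies \eqref{cond2}. Since $g_1$ and $g_2$ have the same full jet at $\partial M$ and the construction of $c$ in \cite{conformalcalderon} depends only on that jet, we get $c_1\simeq c_2$ and hence $\tilde g_1\simeq\tilde g_2$, and the two normalised boundary-normal coordinate systems agree to infinite order. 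By DN-map invariance under gauge and conformal change, $\Lambda_{\tilde g_i,\tilde A_i,\tilde Q_i}=\Lambda_{g_i,A_i,Q_i}$, so the two normalised triples have the same DN map; applying the already-proved extraction formula in the common coordinates yields $\tilde A_1\simeq\tilde A_2$ and $\tilde Q_1\simeq\tilde Q_2$. The main obstacle, and the place where care is genuinely needed, is the matrix bookkeeping: one must check that $b_1$ is genuinely scalar (a multiple of $Id$), that the term $\nabla_{\xi'}b_1\cdot\nabla_{x'}b_1$ and its higher analogues do not mix in unknown non-commuting pieces at the order being solved, and that at each stage the unknown ($A_\alpha$, then $Q$) enters the corresponding equation as an invertible-coefficient linear term so that it can be isolated; this is where the scalar nature of the principal symbol $\sqrt{q_2}$ is used repeatedly.
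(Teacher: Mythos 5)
There is a genuine gap at the very first step, and it propagates through the whole argument. Proposition \ref{PDO} gives $\Lambda_{g,A,Q}\equiv -B|_{\partial M}$ modulo smoothing, so the DN map determines the symbols $b_j(x',\xi')$ only \emph{at} $x^n=0$, not as functions of $x^n$ along the normal segment. Your proposal repeatedly reads quantities off ``along the whole normal line'': you claim $b_1$ gives $g^{\alpha\beta}(x',x^n)$ for all $x^n\in[0,T]$ and hence all normal derivatives of $g$ at the boundary, then that $b_0$ gives $A_\alpha(x',x^n)$ along the segment, and that $b_{-1}$ gives $Q$ along the segment. None of this is available: from $b_1|_{x^n=0}$ you only get $g^{\alpha\beta}(x',0)$. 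The normal derivatives of $g$, $A$ and $Q$ have to be dug out of the \emph{lower-order} symbols at the boundary, because the recursion \eqref{b0}--\eqref{bm} contains $\partial_{x^n}b_m$, which is precisely the mechanism transferring one normal derivative of the coefficients into the next symbol down. In particular your assertion that the equations \eqref{bm} for $m\le -1$ ``are automatically consistent and need not be used'' is the opposite of the truth --- they carry all of the higher-order jet information.

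Two further consequences of this gap. First, once the extraction is done order by order at the boundary, the unknowns $\partial_{x^n}^{j+2}g^{\alpha\beta}$ and $\partial_{x^n}^{j}Q$ appear \emph{entangled} in the quantity $l^{\alpha\beta}$ (and its normal derivatives) read off from $b_{-1}, b_{-2},\dots$; separating them requires the normalisation condition \eqref{cond1}, which fixes the traces $S_{j+2}=g_{\alpha\beta}\partial_{x^n}^{j+2}g^{\alpha\beta}$, combined with the trace identity $P_j^{\alpha\beta}g_{\alpha\beta}=(n-1)\partial_{x^n}^jQ+\tfrac14 S_{j+2}$. Your scheme never confronts this entanglement because it (incorrectly) recovers $g$ and $Q$ independently. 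Second, the hypothesis $n\ge 3$ enters through the division by $2-n$ in \eqref{n=2boundarydeteqn} when solving for $\partial_{x^n}g^{\alpha\beta}$ from $k^{\alpha\beta}$; your argument uses $n\ge3$ nowhere, and if it were valid it would determine the full jet of $g$ in dimension two as well, contradicting the conformal obstruction recorded in Remark \ref{n=2boundarydet}. The final paragraph of your proposal (on reconciling the two normalisations when $g_1\simeq g_2$) is reasonable in outline, but it rests on the extraction step, which as written does not work.
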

\begin{proof}
 Since we have:
\begin{align*}
\partial_{x^n}g_{\alpha \beta} = - (g_{\alpha \rho} \partial_{x^n}g^{\rho \gamma}) g_{\gamma \beta}
\end{align*}
it suffices to determine the inverse matrix $g^{\alpha \beta}$ and its normal derivatives. By the formula \eqref{b1}, we have that $b_1^2 = -g^{\alpha \beta} \xi_\alpha \xi_\beta$ determines $g^{\alpha \beta}|_{\partial M}$.

If we write $\omega = \frac{\xi'}{|\xi'|_g}$ and use the notation:
\begin{align*}
k^{\alpha \beta} = \partial_{x^n}g^{\alpha \beta} - (g_{\gamma \delta} \partial_{x^n}g^{\gamma \delta}) g^{\alpha \beta}
\end{align*}
then we may rewrite \eqref{b0} as follows:
\begin{align*}
b_0 = ig^{\alpha \beta} A_\alpha \omega_\beta - \frac{1}{4}k^{\alpha \beta} \omega_\alpha \omega_\beta \times Id + T_0(g^{\alpha \beta})
\end{align*}
where $T_0$ depends only on $g^{\alpha \beta}|_{\partial M}$, which is already explicitly determined.

Thus, by plugging in $\pm \omega$, we may recover $A_\alpha$ and $k^{\alpha \beta}$; it is not hard to see that:
\begin{align*}
k^{\alpha \beta} g_{\alpha \beta} = (2 - n)\partial_{x^n}g^{\alpha \beta} g_{\alpha \beta}
\end{align*} 
and we may therefore write:
\begin{align}\label{n=2boundarydeteqn}
\partial_{x^n}g^{\alpha \beta} = k^{\alpha \beta} + \frac{1}{2 - n} (k^{\rho \gamma}g_{\rho \gamma})g^{\alpha \beta}
\end{align}
In the next step we will use the notation $l^{\alpha \beta} = \frac{1}{4}\partial_{x^n}k^{\alpha \beta} + Qg^{\alpha \beta}$. Then we may rewrite \eqref{b-1} as:
\begin{align*}
b_{-1} = \frac{1}{2 \sqrt{q_2}}(ig^{\alpha \beta}(\partial_{x^n}A_\alpha)\omega_\beta - l^{\alpha \beta} \omega_\alpha \omega_\beta) + T_1(g^{\alpha \beta}, \partial_{x^n}g^{\alpha \beta}, A_\alpha)
\end{align*}
where $T_1$ is an expression that depends only on $g^{\alpha \beta}$, $\partial_{x^n}g^{\alpha \beta}$ and $A_\alpha$ which have already been explicitly determined. Therefore, we may recover $l^{\alpha \beta}$ and $\partial_{x^n}A_\alpha$. Now, inductively we may prove the formula:
\begin{multline*}
b_{m-1} = \Big(\frac{1}{2\sqrt{q_2}}\Big)^{m-1}(ig^{\alpha \beta} \partial_{x^n}^{|m-1|} A_\alpha \omega_\beta - \partial_{x^n}^{|m|}l^{\alpha \beta} \omega_\alpha \omega_\beta)\\
 + T_{m-1}(g^{\alpha \beta}, \dotso, \partial_{x^{n}}^{|m - 1|} g^{\alpha \beta}, A_\alpha, \dotso, \partial_{x^n}^{|m|} A_\alpha, Q, \dotso, \partial_{x^n}^{|m + 1|}Q)
\end{multline*}
for $m \leq -1$, where $T_{m-1}$ only depends on the quantities in the bracket. Therefore, by induction we may explicitly determine $\partial_{x^n}^j l^{\alpha \beta}$ and $\partial_{x^n}^j A_\alpha$ for all $j \geq 0$.

Finally, we claim that we may inductively recover $\partial_{x^n}^{j+2}g^{\alpha \beta}$ and $\partial_{x^n}^jQ$ for any $j \geq 0$; let us also denote $S_j = g_{\alpha \beta} \partial_{x^n}^jg^{\alpha \beta}$. For the base case $j = 0$, notice that $\partial_{x^n}(g_{\alpha \beta}\partial_{x^n}g^{\alpha \beta}) = 0$, which implies that $S_2 = - \partial_{x^n}g_{\alpha \beta} \partial_{x^n}g^{\alpha \beta}$, i.e. we know $S_2$.

Therefore, since we know $l^{\alpha \beta}$, we may also explicitly determine $\frac{1}{4}\partial^2_{x^n}g^{\alpha \beta} \times Id + Qg^{\alpha \beta} =: P_0^{\alpha \beta}$. This implies:
\begin{align*}
P_0^{\alpha \beta} g_{\alpha \beta} = (n - 1)Q + \frac{1}{4}S_2
\end{align*}
from which we easily infer the knowledge of $Q$ and hence also of $\partial^2_{x^n}g^{\alpha \beta}$.

For the inductive step, we may do something very similar: we have that for $j \geq 1$, the quantity $P_j^{\alpha \beta} = \frac{1}{4}\partial_{x^n}^{j + 2}g^{\alpha \beta} + (\partial_{x^n}^j Q) g^{\alpha \beta}$ is determined, since the condition $\partial^{j +1}_{x^n}(g_{\alpha \beta} \partial_{x^n}g^{\alpha \beta}) = 0$ determines $S_{j + 2}$ by previously reconstructed quantities. Then by the formula:
\begin{align*}
P_j^{\alpha \beta} g_{\alpha \beta} = (n - 1)\partial_{x^n}^j Q + \frac{1}{4}S_{j+2}
\end{align*}
we may determine $\partial_{x^n}^j Q$ and thus, $\partial^{j +2}_{x^n}g^{\alpha \beta}$ as well. This completes the proof of the induction and of the theorem, since two formal expansions of the same operator in terms of classical symbols that agree modulo $S^{-\infty}$, must also be congruent.
\end{proof}

Let us emphasise that a key role in the above generalisations to the vector case is played by the fact that the operator $d_A^*d_A + Q$ has a principal symbol that is a scalar multiple of identity; the necessary algebra then unveils in much the same way as in the scalar case. A couple of remarks are in place.
%Of course, one needs to check that the appropriate symbol calculus holds for vector valued pseudodifferential operators, as well as the ``trick" with the generalised heat equation in the proof of Proposition \ref{PDO} (mostly covered in Treves \cite{treves}).

\begin{rem}[Boundary determination for surfaces]\rm\label{n=2boundarydet}
There are a few reasons to exclude the case $\dim M = 2$ in Theorem \ref{boundarydet}. To start with, after the proof of Proposition 1.3 in \cite{leeuhlmann}, the authors (considering the case $E = M \times \mathbb{C}$, $A = 0$ and $Q = 0$) remark that all the symbols of $B$ satisfy $b_j = 0$ for $j \leq 0$ (easily checked for $b_0$ by direct computation and for the rest by induction); in other words, if we choose $b_1 = -\xi_1 \sqrt{g^{11}}$, the factorisation \eqref{factorisation} becomes a factorisation into honest differential operators where $B = -\sqrt{g^{11}}D_{x^1}$, which is in compliance with the additional conformal symmetry of the Calder\'{o}n problem for surfaces. Secondly, the equation \eqref{n=2boundarydeteqn} clearly fails to hold when $n = 2$ -- in that case $k^{11} = 0$ clearly so there is no extra information from this expression. However, when we introduce a connection and a potential, one can show that (choose $b_1 = -\xi_1 \sqrt{g^{11}}$ again):
\begin{align*}
b_0 &= i\sqrt{g^{11}}A_1\\
2 \xi_1 b_{-1} &=  \partial_{x^2} A_1 - \big(\partial_{x^1}\sqrt{g^{11}}\big)A_1 - \frac{Q}{\sqrt{g^{11}}}
\end{align*}
Thus, the DN map determines the values of $g_{11}$ and $A_1$ at the boundary (recall that $A_2 = 0$ in a neighbourhood of the boundary). Therefore, we may also determine $\partial_{x^2} A_1 - \frac{Q}{\sqrt{g^{11}}}$ from the expression for $b_{-1}$ and so if $Q = 0$, we determine the normal derivative of order one $\partial_{x^2} A_1$ -- we will need this fact for a later application. If we go on to compute $b_{-2}$, we see that it suffices to determine $\partial_{x^2}g_{11}|_{\partial M}$ to compute derivatives $\partial^j_{x^2}A_1|_{\partial M}$ of all orders $j \geq 2$; however, again, we know we cannot possibly determine $\partial_{x^2}g_{11}|_{\partial M}$ due to the additional conformal symmetry of the problem in two dimensions.
\end{rem}

\begin{rem}[Local boundary determination]\label{localboundarydet}\rm
If we assume that $\Gamma \subset \partial M$ is open and $\Lambda_{g_1, A_1, Q_1}(f)|_{\Gamma} = \Lambda_{g_2, A_2, Q_2}(f)|_{\Gamma}$ for all $f \in C_0(\Gamma)$ and that the coresponding quantities are normalised, then by the local nature of the above argument in Theorem \ref{boundarydet}, we have that: $g_1|_{\Gamma} \simeq g_2|_{\Gamma}$, $A_2|_{\Gamma} \simeq A_2|_{\Gamma}$ and $Q_1|_{\Gamma} \simeq Q_2|_{\Gamma}$.
\end{rem}

We end this chapter with an observation that what we proved so far may be translated to the setting of an arbitrary vector bundle $E$ over $M$, rather than just the trivial one.

\begin{rem}[The case of $E$ topologically non-trivial]\rm \label{boundarydettoponontriv}

Firstly, observe that the factorisation \eqref{factorisationlemma} and so Lemma \ref{factorisationlemma} generalises to this case -- the construction that is performed there is independent of the fact that $A_n = 0$, by standard arguments of construction of global PDOs. So we obtain a first order PDO $B$ acting on sections and the local calculations in Lemma \ref{factorisationlemma} (equations \eqref{b0}, \eqref{b-1}, \eqref{bm}) carry over in the trivialisation where $A_n = 0$. Therefore, by the proof of Proposition \ref{PDO}, we have $\Lambda_{g, A, Q} \equiv -B|_{\partial M}$ modulo smoothing.

Our main result of the chapter, Theorem \ref{boundarydet}, remains valid in the following form. By Lemma \ref{lemma} we may assume that $(A - B)(\frac{\partial}{\partial x^n}) = 0$ in a neighbourhood of the boundary. For a coordinate chart $W \subset \partial M$ and some given trivialisation of $E|_{W}$, we may extend this trivialisation to a neighbourhood $W \times [0, \epsilon)$ of $W$ in $M$. Again, by the proof of Lemma \ref{lemma} we may change the trivialisation by a gauge transformation such that $A_n = B_n = 0$ locally. Then the extraction of the Taylor series from the full symbol of $B$ works the same as before and we have the full jet of $(A - B) \in \Omega^1(M; \text{End }E)$ vanishing at the boundary.

Remarks \ref{n=2boundarydet} and \ref{localboundarydet} clearly generalise to this setting.
\end{rem}

\section{Recovering a Yang-Mills connection for $m = 1$}

In this section we consider the main conjecture in the special case of Yang-Mills connections. We prove Theorem \ref{linebundle} for line bundles in the smooth category. In the proofs, we introduce a new technique that we call ``drilling", based on the degenerate unique continuation principles for elliptic systems -- heuristically, the idea is to gauge transform a pair of connections using suitable gauges to a pair of connections that are singular over a countable union of hypersurfaces and apply the degenerate UCP possibly infinitely many times to ``drill through" the hypersurfaces.

We fix a Yang-Mills connection $A$ on the Hermitian vector bundle $E = M \times \mathbb{C}^m$ (with the standard metric) over a compact Riemannian manifold $(M, g)$ with boundary. Let us extend the connection $A$ to a ``new connection" on the endomorphism bundle $\text{End }E = M \times \mathbb{C}^{m \times m}$ by simply asking that $d_{\tilde{A}} F = dF + AF$ globally, where $\tilde{A}$ is the matrix of one forms with values in $\text{End}(\mathbb{C}^{m \times m})$ induced by $A$ by multiplication on the left; it is easy to check this is a unitary connection. Note that $d_{\tilde{A}}$ \emph{does not} satisfy the usual Leibnitz rule such does the usual connection $D_A F = dF + [A, F]$ on the endomorphism bundle. 

Recall that the DN maps associated to the vector bundle $E$ and operators $\Lapl_A$ and $\Lapl_B$ are equal if and only if they agree for the induced operators $\Lapl_{\tilde{A}}$ and $\Lapl_{\tilde{B}}$ on the endomorphism bundle. Here and throughout the chapter, we will use the same notation $d_A = d + A$ for both covariant derivatives $d_A$ and $d_{\tilde{A}}$, which will hopefully be clear from context. The complex bilinear form on matrix valued $1$-forms $(\alpha, \beta) = g^{ij} \alpha_i \beta_j$ is obtained by extending the usual inner product on forms. 

We start by writing down a simple, but key lemma that will yield the right gauge in our situation:

\begin{lemma}\label{gauge_constr} If $U \subset \mathbb{R}^n$ open and $F: U \to \mathbb{C}^{m \times m}$ is an invertible matrix function and we put $A' = F^*(A)$ for $A$ a matrix of one forms on $U$, then $F$ satisfies $d_A^*d_A F = 0$ if and only if $d^*A' = Q_0(x, A')$, where $Q_0$ is smooth of order zero and quadratic in $A'$, given by $Q_0(x, A') = (A', A')$. If in addition $A$ is Yang-Mills and unitary, then $A'$ satisfies an elliptic non-linear equation with diagonal principal part.
\end{lemma}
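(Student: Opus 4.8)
The plan is to compute $d_A^*d_A F$ directly using the expansion formulas for the twisted Laplacian, and then convert the resulting matrix identity into an equation for $A' = F^*(A) = F^{-1}dF + F^{-1}AF$ by conjugating. First I would use the key algebraic fact recorded just before the statement, namely that $d_{A'}^{*'} = F^{-1} d_A F$ where $*'$ is the adjoint with respect to the pulled-back Hermitian structure (equation from the ``Local expressions'' subsection). Applying this with the section $dF$ in mind: since $d_A F = F \cdot (F^{-1} d_A F) = F \cdot d_{A'} 1 \cdot$(?), the cleanest route is to observe that $d_A F = F \, dA' $ is not quite right; instead one writes $d_A F = d F + A F$ and notes $F^{-1}(dF + AF) = F^{-1}dF + F^{-1}AF = A'$ (viewing $A'$ as an $\mathrm{End}\,E$-valued $1$-form, equivalently $F^{-1} d_A F = A'$ after identifying). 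Hence $d_A^* d_A F = d_A^*( F A') = F \cdot \big( F^{-1} d_A^* (F A') \big)$, and the point is to recognise $F^{-1} d_A^* (F \cdot)$ as a conjugate of the adjoint, giving $F^{-1} d_A^* d_A F = d_{A'}^{*'} A'$ up to the conjugated codifferential. The first main step, then, is to carefully establish the identity $F^{-1}(d_A^* d_A F) = (d^{*'}_{A'} d_{A'} 1)$-type expression, i.e. to reduce $d_A^* d_A F = 0$ to $d_{A'}^{*'} A' = 0$ where the adjoint $*'$ is with respect to the pulled-back (possibly non-standard) Hermitian metric.

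The second step is to unwind $d_{A'}^{*'} A' = 0$ into the stated scalar-type PDE. Using formula \eqref{twistedcodiff}, $d_{A'}^{*'} A' = d^{*'} A' - (A', A')$ where the pairing $(A',A') = g^{ij} A'_i A'_j$ and $d^{*'}$ is the codifferential for the pulled-back structure. Now the subtlety: if $A$ is unitary, $A'$ need not be (the gauge $F$ is only invertible, not unitary), so $d^{*'}$ differs from the plain $d^*$. One needs the formula $d^{*'} \omega = d^* \omega + (\text{lower-order term involving } F^*F)$ — concretely, writing $P = F^*F$ for the pulled-back Hermitian form, the codifferential changes by a first-order term $\sim P^{-1}(dP) \cdot \omega$ contracted against the metric. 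However, I expect that when one combines $d_A^* d_A F = 0$ with the \emph{left}-multiplication convention for $d_{\tilde A}$ (emphasised in Section 4: $d_{\tilde A} F = dF + AF$, not $dF + [A,F]$), the relevant adjoint structure is actually the \emph{standard} one on $\mathbb{C}^{m\times m}$ because $F$ is being treated as a section of the trivial bundle $E^{\oplus m}$ and $A$ acts by left multiplication unitarily there. This is the cleanest way to see $d^{*'} = d^*$ effectively, yielding $d^* A' = (A', A') = Q_0(x, A')$, which is the first assertion. The main obstacle is getting this adjoint bookkeeping exactly right — tracking whether the relevant inner product on the $F$-columns is the standard one or the $F$-twisted one, and confirming the quadratic term is precisely $(A',A')$ with the right sign (matching the sign in \eqref{twistedcodiff}).

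The third and final step establishes the ellipticity with diagonal principal part when $A$ is additionally Yang-Mills and unitary. Here I would combine two facts: (i) the gauge condition just derived, $d^* A' = (A',A')$, which is the connection analogue of a Coulomb/harmonic gauge, and (ii) the Yang-Mills equation $D_{A'}^{*'} F_{A'} = 0$, which is gauge-invariant so holds for $A'$ as it held for $A$. Expanding $D_{A'}^{*'} F_{A'} = 0$ in coordinates: $F_{A'} = dA' + A' \wedge A'$, and $D_{A'}^{*'} F_{A'} = d^{*'}(dA') + (\text{terms from } A'\wedge A' \text{ and from } [A', \cdot])$. The top-order part of $d^{*'} d A'$ is $d^* d A'$, whose principal symbol on $1$-forms is $|\xi|^2_g \cdot \mathrm{Id}$ modulo the $dd^*$ piece; but the gauge condition $d^* A' = (A',A')$ lets us replace $d^* A'$ by a lower-order (quadratic, zeroth-order-in-derivatives) term, so $dd^* A'$ becomes first-order, and $(dd^* + d^*d)A' = \Delta A'$ up to lower order. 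Thus the full second-order operator acting on $A'$ is $\Delta_g \otimes \mathrm{Id} + (\text{first order}) + (\text{zeroth order, nonlinear})$, which is elliptic with scalar diagonal principal symbol $|\xi|_g^2 \,\mathrm{Id}$. I would cite \cite{uhlenbeck, donaldson} for the standard computation that Yang-Mills plus Coulomb-type gauge gives such an elliptic system, adapting it to the present gauge $d^*A' = (A',A')$; the only new point is that the inhomogeneous gauge term $(A',A')$ is itself of order zero and so does not affect the principal part, which is exactly why the argument goes through.
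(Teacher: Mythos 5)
Your proposal is correct and follows essentially the same route as the paper: conjugate by $F$ to reduce $d_A^*d_AF=0$ to $d_{A'}^{*'}d_{A'}(Id)=0$, expand via \eqref{expansion} to obtain $d^*A'=(A',A')$, and then add the Yang--Mills equation for $A'$ to $d$ of this gauge condition so that the principal part becomes $dd^*+d^*d$. Your concern about $d^{*'}$ versus $d^*$ resolves exactly as you suspect: the columns of $F$ are sections of $E^{\oplus m}$ with the standard structure, and a direct computation gives $d_A^*d_AF=F\big(d^*A'-(A',A')\big)$ with the plain codifferential.
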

\begin{proof}%(note this might not be the true formal adjoint -- see preliminaries)
By using that $d_{A'} = F^{-1}d_A F$ and similarly $d_{A'}^* = F^{-1}d_{A}^* F$, note that $d^*_Ad_A F = 0$ is equivalent to the following:
\begin{gather*}
F F^{-1}d^*_AF F^{-1}d_A F = 0 \iff F d^{*'}_{A'}d_{A'} (Id) = 0 \iff d^*A' = (A', A') \iff d^{*'}_{A'} A' = 0
\end{gather*}
by expanding the $d_{A'}^{*'}d_{A'}$ operator by \eqref{expansion}. Here $^{*'}$ denotes the adjoint w.r.t. the pulled back Hermitian structure by $F$. If $A$ is Yang-Mills and unitary, then by adding $(D_{A'})^{*'}F_{A'} = 0$ to $dd^*{A'} = d\big(Q_0(x, A')\big)$ we get an elliptic system with principal part equal to $dd^* + d^*d$.
\end{proof}

%If we put the boundary condition $F|_{\partial M} = 0$, it is by standard elliptic theory and the Fredholm alternative we deduce that, since $\ker (d^*_A d_A) = \{0\}$ for smooth solutions with zero boundary condition (it is positive and hence any solution also satisfies $d_A f = 0$, so uniqueness for first order PDE gives the result).

By standard elliptic theory and the fact that $\ker (d^*_A d_A) = \{0\}$, we know that we may solve $d_A^*d_AF = 0$ in $H^1(M; \mathbb{C}^{m \times m})$ uniquely for any boundary condition in $H^{\frac{1}{2}}(\partial M; \mathbb{C}^{m \times m})$ (see Appendix A in \cite{cek3}). Therefore, at least near the boundary, we know that $A'$ exists if $F|_{\partial M}$ is smooth non-singular and that it satisfies the equation $d^* A' = Q_0(x, A')$. Thus we may obtain the following result:

\begin{theorem}\label{local}
Consider two Yang-Mills connections $A$ and $B$ on $E = M \times \mathbb{C}^m$ with the same DN map on the whole of $\partial M$. Then there exists a neighbourhood $U$ of the boundary and a bundle isomorphism $H$ for the restricted bundle $E|_{U}$ with $H|_{\partial M} = Id$ such that $H^*{B} = A$ on $U$. Moreover, if $A$ and $B$ are unitary (with respect to the standard structure), then we have $H$ to be a unitary automorphism.
\end{theorem}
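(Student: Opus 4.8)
The plan is to combine the boundary determination result of the previous section with a unique continuation argument in the ``harmonic-type'' gauge of Lemma \ref{gauge_constr}. First I would use Lemma \ref{lemma} to replace $A$ and $B$ (on a collar neighbourhood $\partial M \times [0,\epsilon)$) by gauge-equivalent connections, still denoted $A$, $B$, that are in temporal gauge with respect to the normal direction, i.e. $A(\partial_t) = B(\partial_t) = 0$; the gauges here are the identity on $\partial M$, so the DN maps are unchanged. Then, by Remark \ref{boundarydettoponontriv} (with $Q = 0$), the equality $\Lambda_A = \Lambda_B$ on all of $\partial M$ forces the full Taylor series of $A - B$ (an $\mathrm{End}\,E$-valued one-form) to vanish at $\partial M$: $A \simeq B$ along $\partial M$. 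In particular $A$ and $B$ agree to infinite order on the boundary hypersurface.

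Next I would produce the gauge $H$ solving $H^*B = A$ near the boundary. Following Lemma \ref{gauge_constr}, I would look for an invertible $F:U \to \mathbb{C}^{m\times m}$ with $F|_{\partial M} = Id$ satisfying $d_B^* d_B F = 0$; by the ellipticity and invertibility of $d_B^*d_B$ (noted right after Lemma \ref{gauge_constr}, citing the appendix of \cite{cek3}), such an $F$ exists in $H^1(M;\mathbb{C}^{m\times m})$ and is smooth, and near the boundary it stays close to $Id$, hence invertible on some collar $U$. Setting $A'' = F^*B$, Lemma \ref{gauge_constr} gives that $A''$ satisfies the elliptic Yang-Mills system with diagonal principal part $dd^* + d^*d$ in the gauge $d^* A'' = (A'', A'')$; the same is true of $A$ itself after applying its own such gauge (or one can argue that $A$, being Yang-Mills in temporal gauge, can likewise be put in this gauge). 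The key point is that $A''$ and $A$ now both solve the \emph{same} second-order elliptic system, and by the boundary determination step together with the temporal normalisation their full jets agree at $\partial M$ (one checks $F$ has trivial jet at the boundary beyond the identity term, so $A'' \simeq B \simeq A$ there). A unique continuation theorem for elliptic systems with diagonal principal part (Aronszajn-type SUCP, or just the local Cauchy uniqueness for such systems since the Cauchy data coincide to infinite order on $\partial M$) then yields $A'' = A$, i.e. $F^*B = A$, on a one-sided neighbourhood $U$ of $\partial M$. This gives the desired $H := F$.

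For the unitarity statement: if $A$ and $B$ are unitary with respect to the standard Hermitian structure, I would argue that the constructed $H$ is automatically unitary near the boundary. One way is to note that the pulled-back structure $\langle \cdot,\cdot\rangle' = \langle H\cdot, H\cdot\rangle$ makes $A = H^*B$ compatible with it, while $A$ is also compatible with the standard structure; the difference is governed by $H^*H$, which satisfies $d_A(H^*H) = 0$ (since $d_B B^* = 0$ type compatibility transports), so $H^*H$ is $d_A$-parallel with $H^*H|_{\partial M} = Id$, forcing $H^*H = Id$ by uniqueness of parallel transport from the boundary. Alternatively, one can run the whole construction inside the group of unitary gauges from the start: solve for $F$ with a polar-decomposition correction, or invoke that the real-analytic/elliptic solution preserving the reality/unitarity constraint to infinite order at the boundary must preserve it everywhere on $U$ by the same UCP.

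The main obstacle I anticipate is the unique continuation step: one must set it up so that $A$ and $F^*B$ genuinely satisfy a coupled elliptic system to which a strong (or at least finite-order Cauchy) unique continuation principle applies, and verify that the Cauchy data — values and normal derivatives to all orders along $\partial M$ — truly coincide. This requires carefully tracking that the gauge $F$ from Lemma \ref{gauge_constr}, being itself a solution of $d_B^*d_B F = 0$ with $F|_{\partial M} = Id$ and with $B$ in temporal gauge, has $\partial_\nu F|_{\partial M} = 0$ and indeed $F \simeq Id$ at $\partial M$ (so it does not spoil the jet matching coming from Theorem \ref{boundarydet}); this is the delicate bookkeeping, but it is exactly the kind of computation the temporal and normalised gauges are designed to make transparent.
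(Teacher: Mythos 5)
Your overall strategy is the paper's: put $A$ and $B$ in temporal gauge, use boundary determination to match jets, pass to the gauge of Lemma \ref{gauge_constr} so that the connections satisfy an elliptic system with diagonal principal part, and conclude by unique continuation; the unitarity argument via the parallelism of $H^*H$ is also the paper's. However, there are two genuine gaps in how you set up the comparison. First, you cannot compare $F^*B$ with $A$ directly: $A$ in temporal gauge does \emph{not} satisfy the gauge condition $d^*A = (A,A)$, so it does not solve the elliptic system of Lemma \ref{gauge_constr}, and your parenthetical escape ("$A$, being Yang-Mills in temporal gauge, can likewise be put in this gauge") is false. One genuinely needs \emph{two} gauges, $F$ with $d_A^*d_AF=0$ and $G$ with $d_B^*d_BG=0$, both equal to $Id$ on $\partial M$, and the final gauge is the composite $H = GF^{-1}$ (not $H=F$), which intertwines $A'=F^*A$ and $B'=G^*B$.

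Second, and more seriously, your jet-matching step rests on the claim that "$F$ has trivial jet at the boundary beyond the identity term." This is not true: $\partial_\nu F|_{\partial M}$ is essentially $\Lambda_A(Id)$ (the DN map applied to the identity section of the endomorphism bundle), which is generically nonzero, so $F\not\simeq Id$. What actually saves the argument — and what the paper proves — is the weaker statement $F - G = O(t^\infty)$: the equality of the DN maps gives $\partial_\nu(F-G)|_{\partial M}=0$, and then one subtracts the two equations $d_A^*d_AF=0$ and $d_B^*d_BG=0$, uses $A-B=O(t^\infty)$ from Theorem \ref{boundarydet}, and inductively differentiates in the normal direction (only the $\partial_t^2$ term of the Laplacian survives) to conclude $F-G=O(t^\infty)$, hence $A'-B'=O(t^\infty)$. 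Without this two-gauge cancellation your Cauchy data do not match and the UCP cannot be invoked. Finally, note that the case $n=2$ needs separate treatment, since boundary determination there only yields $A-B=O(t)$ rather than $O(t^\infty)$; the paper bootstraps this through the elliptic equation for $A'$ and $B'$ to recover the infinite-order agreement.
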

\begin{proof}
By the construction above, we obtain smooth gauge equivalences $F$ and $G$, which satisfy $d_A^*d_A F = 0$ and $d_B^*d_B G = 0$ respectively, with boundary conditions $F|_{\partial M} = G|_{\partial M} = Id$. This is non-singular near the boundary and the connections $A' = F^*(A)$ and $B' = G^*(B)$ satisfy the equations
\begin{align}\label{gauge}
d^* A' = Q_0(x, A') \quad \text{and} \quad d^*B' = Q_0(x, B')
\end{align}
Now we can also expand the equations $(D_{A'})^*F_{A'} = 0 = (D_{B'})^*F_{B'}$ (note that $A'$ and $B'$ are now Yang-Mills with respect to the fibrewise inner product pulled back by $F$ and $G$ respectively, rather than the standard inner product):
\begin{align*}
(d^*d + P) A' = 0 \quad \text{and} \quad (d^*d + P) B' = 0
\end{align*}
where $P$ is a first order, non-linear operator arising from the equality 
\begin{align*}
(d^*d + P)A' = (-1)^{n+1}\star D_{A'} \star F_{A'}
\end{align*}
where $\star$ is the Hodge star extended to bundle valued forms. Therefore by simply applying the operator $d$ to \eqref{gauge} and adding to the Yang-Mills equations, we obtain an \textit{elliptic system of equations}, with diagonal principal part
\begin{align}\label{eqnnn}
\Delta A' = (dd^*+d^*d)A' = Q_1(x, A', \nabla A')
\end{align}
where $Q_1$ is a smooth term of first order, polynomial in $A'$ and $\nabla A'$. In order to use uniqueness of solutions to such equations, we need some boundary conditions -- this is where we use the DN map hypothesis. Without loss of generality, assume that the normal components of connections $A$ and $B$ near the boundary vanish (see Lemma \ref{lemma}).
%Lemma 8.5 in \cite{LCW}, i.e. we want vanishing of the normal derivatives $\partial^j_\nu A_n = \partial^j_\nu B_n = 0$ for all $j \geq 0$, where by $A_n$ and $B_n$ we denote the normal components of the connections at the boundary.

Thus from equality of the DN maps, we have $\frac{\partial (F - G)}{\partial \nu}|_{\partial M} = 0$. By subtracting the initial equations for $F$ and $G$, we get:
\begin{align}\label{eqn3}
\Delta (F - G) - 2(A, dF) + 2(B, dG) + (d^*A) F - (d^*B) G - (A, AF) + (B, BG) = 0
\end{align}
and the point is that we have $\Delta(F - G)$ equal to lower order terms, where we are fixing the semi-geodesic boundary coordinates $(x, t)$ with $t$ denoting the direction of the normal -- this is because we already know that $(A - B) = O(t^{\infty})$, if $n \geq 3$, by the boundary determination result Theorem \ref{boundarydet}, and $(F - G) = O(t)$. Thus when expanding the Laplacian, we are left with only $\frac{\partial^2}{\partial t^2}$ factor, which then allows us to conclude inductively $(F - G) = O(t^{\infty})$ by repeated differentiation. 

If $n = 2$, notice that by Remark \ref{n=2boundarydet} we have $(A - B) = O(t)$; by \eqref{eqn3} we have $(F - G) = O(t^2)$ and thus we have also that $(A' - B') = O(t)$. Therefore by the elliptic equation \eqref{eqnnn}, the analogous counterpart of it for $B'$ and repeated differentiation we obtain $(A' - B') = O(t^\infty)$. 

Therefore, we are left with two connections $A'$ and $B'$ which satisfy an elliptic equation and have the same full Taylor series at the boundary -- by the unique continuation property for elliptic systems with diagonal principal part (see e.g. Theorem 3.5.2. in \cite{isakov}), we conclude $A' \equiv B'$ in $U$ and hence if we put $H = GF^{-1}$ we have $H^*B = A$ on $U$.
%(see Remark \ref{UCP})

Finally, if $A$ and $B$ are unitary, we have that (locally, in a unitary trivialisation) $H^*(A) = B$ implies by definition that $dH = HB - AH$ and $d(H^*) = -BH^* + H^*A$, by the unitary property of connection matrices -- combining the two, we have:
\begin{align*}
d(HH^*) = [HH^*, A]
\end{align*}
where $[\cdot, \cdot]$ is the commutator. This first order system has a unique solution, which is given by $HH^* = Id$, as $H|_{\partial M} = Id$ and thus $H$ is unitary whenever $H^*(A) = B$.
\end{proof}

%\begin{rem}
%Notice that in this way, we are not only able to identify the connections with a gauge that is equal to identity on the set where the partial data is known, but rather on the whole boundary!
%\end{rem}

The next step is to go \textit{inside} the manifold from the boundary. Namely, the main problem lies in the fact that $F$ can be singular on a large set, stopping our argument of unique continuation. However, at least in the scalar case, we may get over this, by essentially knowing facts about zero sets of solutions to elliptic systems of equations. We need to recall the following definition:

\begin{definition}
A subset of a smooth manifold is called \textit{countably $k$-rectifiable} if it is contained in a countable union of smooth $k$-dimensional submanifolds.
\end{definition}

The result we will need is essentially proved in \cite{bar}, Theorem 2, for the scalar case; the vector case we will need follows in a straightforward manner from its method of proof. We outline it here for completeness.

\begin{lemma}\label{zeroset}
Let $(M_0, g_0)$ be a smooth $n$-dimensional, connected Riemannian manifold. Let $L: C^{\infty}(M_0, \mathbb{R}^l) \to C^{\infty}(M_0, \mathbb{R}^l)$ be a differential operator on vector functions for $l$ a positive integer, such that:
\begin{align*}
Lu(x) = \Delta u (x) + R(x, u(x), du(x))
\end{align*}
where $\Delta$ is the metric Laplacian, $R$ is a smooth function with values in $\mathbb{R}^l$. Moreover, we assume that $R$ \textit{respects the zero section}, i.e. $R(x, 0, 0) = 0$.
%; this holds for all $u \in C^{\infty}(M, \mathbb{R}^k)$ and $x \in M$.

Now assume $u \not \equiv 0$ is a solution to $Lu = 0$. Let us denote $\mathcal{N}(u) = u^{-1}(0)$ the zero set and by $\mathcal{N}_{crit}(u) = \mathcal{N}(u) \cap \{x \mid du(x) = 0\}$ the critical zero set. Then we claim that $\mathcal{N}(u)$ is countably $(n - 1)$-rectifiable and moreover, $\mathcal{N}_{crit}(u)$ is countably $(n-2)$-rectifiable.
\end{lemma}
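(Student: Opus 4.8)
The plan is to follow the classical Hardt--Simon / Han--Lin strategy for nodal sets of solutions to second-order elliptic equations, adapted to systems respecting the zero section as in \cite{bar}. The key structural fact is that a nontrivial solution $u$ of $Lu = 0$ has a finite \emph{vanishing order} at every point: near $p \in \mathcal{N}(u)$ one writes $u$ in geodesic normal coordinates and, because $R$ respects the zero section, $u$ satisfies a differential inequality $|\Delta u| \leq C(|u| + |\nabla u|)$, so Aronszajn-type unique continuation forbids infinite-order vanishing unless $u \equiv 0$ on the connected manifold $M_0$. Thus for each $p$ there is an integer $N = N(p) \geq 1$ such that the Taylor expansion of $u$ at $p$ begins with a nonzero homogeneous vector-valued polynomial $P_N$ of degree $N$; moreover each component of $P_N$ is a harmonic polynomial (this is where one uses that the leading term of the PDE is the flat Laplacian in normal coordinates, and the lower-order terms contribute at order $\geq N$). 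The first step, then, is to record this: stratify $\mathcal{N}(u) = \bigcup_{N \geq 1} \Sigma_N$ where $\Sigma_N = \{p : N(p) = N\}$, and it suffices to show each $\Sigma_N$ is contained in a countable union of $(n-1)$-dimensional submanifolds, and each $\Sigma_N \cap \mathcal{N}_{\mathrm{crit}}(u)$ (which forces $N \geq 2$) in a countable union of $(n-2)$-dimensional submanifolds.

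\textbf{Second step: the implicit function / blow-up argument.} For a point $p \in \Sigma_1$, some component $u^{(j)}$ has nonvanishing gradient, so $\{u^{(j)} = 0\}$ is a smooth hypersurface near $p$ containing $\mathcal{N}(u)$ locally; covering $\Sigma_1$ by countably many such neighborhoods handles $N=1$. For $p \in \Sigma_N$ with $N \geq 2$, I would argue as in \cite{bar}: after a rotation the leading harmonic polynomial $P_N$ (say of the component realizing the vanishing order) is not identically zero, and one uses the fact that $\{P_N = 0\}$ is a real-algebraic cone of dimension $\leq n-1$, together with a quantitative lower bound $|u(x)| + |x||\nabla u(x)| \gtrsim |x|^N$ away from a conical neighborhood of $\{P_N = 0\}$ (Bernstein-type / doubling estimate), to show $\mathcal{N}(u)$ is squeezed into a set that, after removing a relatively closed lower-stratum, is locally a graph. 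Concretely one shows: away from $\mathcal{N}_{\mathrm{crit}}(u)$, $\mathcal{N}(u)$ is a $C^\infty$ hypersurface by the implicit function theorem (if $du(p) \neq 0$, some component has a nonzero directional derivative); so $\mathcal{N}(u) \setminus \mathcal{N}_{\mathrm{crit}}(u)$ is an $(n-1)$-submanifold, and being second-countable it is a countable union of such. It remains to control $\mathcal{N}_{\mathrm{crit}}(u)$.

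\textbf{Third step: the critical set.} On $\mathcal{N}_{\mathrm{crit}}(u)$ we have $u(p) = 0$ and $du(p) = 0$, so $N(p) \geq 2$; I would run an induction on $N$ (or on the dimension $n$, following \cite{bar}). The mechanism: fix a component $u^{(j)}$ realizing the minimal vanishing order $N$ at $p$; its leading term $P_N^{(j)}$ is a nonzero harmonic polynomial of degree $N \geq 2$, hence $\nabla P_N^{(j)}$ is a nonzero vector of homogeneous degree-$(N-1)$ harmonic polynomials, so $\nabla u^{(j)}$ is a nontrivial solution (of a related elliptic system obtained by differentiating $Lu=0$, again respecting the zero section up to the leading Laplacian) vanishing to order exactly $N-1$ at $p$. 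Then $\mathcal{N}_{\mathrm{crit}}(u) \cap \Sigma_N \subseteq \mathcal{N}(\nabla u^{(j)})$ locally; applying the previously established $(n-1)$-rectifiability to the solution $\nabla u^{(j)}$ of the differentiated system gives that locally $\mathcal{N}_{\mathrm{crit}}(u)$ sits inside a smooth hypersurface $H$, and one gains one more codimension by noting that inside $H$ the restriction of $u^{(j)}$ still vanishes to finite order and its critical set within $H$ is lower-dimensional --- the precise bookkeeping is exactly the induction of \cite{bar}, Theorem 2, which yields the $(n-2)$ bound. Covering by countably many coordinate neighborhoods and using $\sigma$-compactness of $M_0$ assembles the global statement.

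\textbf{Main obstacle.} The delicate point is the system (vector-valued) aspect: for scalar equations the leading homogeneous polynomial is a single nonzero harmonic polynomial and the nodal/critical analysis is classical; for systems one must choose, at each point, a component (or linear combination of components) realizing the vanishing order, verify its leading term is genuinely harmonic and nonzero, and check that differentiating the system preserves the structural hypothesis ``$\Delta + $ (lower order respecting the zero section)'' needed to re-apply the rectifiability statement in the inductive step. I expect that since the coupling in $R$ is only through zeroth- and first-order terms and $R(x,0,0)=0$, these reductions go through essentially as in \cite{bar} with the flat Laplacian leading symbol; the argument is local and the passage from local submanifold statements to ``countable union of submanifolds'' is immediate from second countability, so the real content is the vanishing-order stratification plus the inductive dimension count, both available from the cited work.
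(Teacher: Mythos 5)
Your first two steps are sound: finite vanishing order follows from Aronszajn-type unique continuation since the system has diagonal principal part $\Delta$ and $R(x,0,0)=0$ gives the differential inequality $|\Delta u|\leq C(|u|+|\nabla u|)$; and once every point of $\mathcal{N}(u)$ has finite order $N$, picking $\alpha$ with $|\alpha|=N-1$ and a component $j$ with $\partial^\alpha u^{(j)}(p)=0$, $\nabla\partial^\alpha u^{(j)}(p)\neq 0$ exhibits $\mathcal{N}(u)$ inside the countable union of hypersurfaces $\{\partial^\alpha u^{(j)}=0,\ \nabla\partial^\alpha u^{(j)}\neq 0\}$. This is exactly B\"ar's Lemma 3 and needs no PDE beyond finite-order vanishing. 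The genuine gap is in your third step. First, the differentiated system for $v=\nabla u^{(j)}$ alone does \emph{not} respect the zero section: differentiating $\Delta u + R(x,u,du)=0$ produces the term $\partial_{x_k}R(x,u,du)$, which does not vanish where $v=dv=0$ unless $u$ itself vanishes there; you would have to work with the joint system for $(u,\nabla u)$. Second, and more seriously, even granting that, the conclusion you can draw from re-applying the $(n-1)$-rectifiability statement is only that $\mathcal{N}_{\mathrm{crit}}(u)$ sits in a countable union of hypersurfaces --- codimension one, not two. The sentence ``inside $H$ the restriction of $u^{(j)}$ still vanishes to finite order and its critical set within $H$ is lower-dimensional'' is where the extra codimension is supposed to appear, but the restriction of $u^{(j)}$ to $H$ satisfies no elliptic equation on $H$, so there is nothing to induct on; and the induction you cite (B\"ar, Theorem 2) is an argument about \emph{first-order Dirac-type} systems, not about restrictions of second-order solutions to hypersurfaces. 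As written, the step would fail.

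Two ways to close the gap. The elementary fix within your framework: at $p\in\mathcal{N}_{\mathrm{crit}}(u)$ with order $N\geq 2$, the leading polynomial $P_N^{(j)}$ is a nonzero homogeneous \emph{harmonic} polynomial of degree $N\geq 2$, hence cannot equal $c\,\ell^N$ for a real linear form $\ell$ (since $\Delta\ell^N=N(N-1)|a|^2\ell^{N-2}$); therefore among the partials $\partial^\beta u^{(j)}$ with $|\beta|=N-1$ (all vanishing at $p$) there are two whose gradients at $p$ are linearly independent, and their common zero set is a codimension-two submanifold through $p$ capturing $\mathcal{N}_{\mathrm{crit}}(u)$ locally within that stratum; summing over $(\beta_1,\beta_2,j)$ gives countable $(n-2)$-rectifiability. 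The paper instead avoids all of this by a reduction you did not consider: setting $\omega=u+du$, one checks $(d+\delta+V)\omega=0$ with $V(\omega_0+\omega_1)=R(x,\omega_0,\omega_1)-\omega_1$ respecting the zero section, so $\omega$ solves a perturbed \emph{Dirac} equation, $\mathcal{N}(\omega)=\mathcal{N}_{\mathrm{crit}}(u)$, and B\"ar's Theorem 2 applies verbatim to give the $(n-2)$ bound (the codimension-two phenomenon for Dirac solutions being a consequence of the invertibility of Clifford multiplication). Either route works; yours needs the harmonic-polynomial argument spelled out in place of the restriction-to-$H$ induction.
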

\begin{proof}
Consider the vector bundle $E_0 = \bigoplus_j \Big(\Lambda^jT^* M_0 \otimes \mathbb{R}^l\Big)$ of vector valued differential forms. It is a well known fact that the operator $d + \delta$ is a Dirac operator on the bundle of differential forms with respect to the Riemannian inner product (it respects the Clifford relations), where $\delta$ is the codifferential. Moreover, we have that $(d + \delta)^2 = d\delta + \delta d = \Delta$ on differential forms. Let us consider the operator:
\begin{align*}
V\Big(\sum \omega_i\Big) = R(x, \omega_0, \omega_1) - \omega_1
\end{align*}
where $\omega_i$ is the component of $\omega$ in $\Lambda^iT^* M_0 \otimes \mathbb{R}^l$. Clearly $V$ is smooth and respects the zero section.

Thus, if $Lu = 0$, then $\omega = u + du \in C^{\infty}(M; E_0)$ solves $(d + \delta + V)(\omega) = 0$. The first order operator $D = d + \delta + V$ is a Dirac operator acting on sections of $E_0$, so the Corollary 1 of \cite{bar} applies (the strong unique continuation property holds for a Dirac operator, i.e. we cannot have a non-zero solution vanishing to an infinite order at a point). Thus we get the result for the $\mathcal{N}_{crit}(u) = \mathcal{N}(\omega)$.

Finally, since $D$ has the SUCP, we know that $\mathcal{N}(u)$ consists of points where $u$ vanishes to finite order and hence the Lemma 3 from \cite{bar} applies.
\end{proof}

%\begin{rem}\rm
%Observe that the previous lemma also gives us a strong form of unique continuation from a subset of a manifold: if $A \subset M$ is a subset of \textit{Hausdorff codimension} less than two, and if $u_1$ and $u_2$ satisfy $Lu_1 = Lu_2 = 0$, where $L$ is the operator from the previous lemma, such that $u_1|_A = u_2|_A$ and $du_1|_A = du_2|_A$, then $u_1 \equiv u_2$. In particular, notice that unique continuation from a codimension one submanifold holds, e.g. open subsets of the boundary.
%\end{rem}

We are now ready to prove the main theorem:

\begin{proof}[Proof of Theorem \ref{linebundle}]
Firstly, gauge transform both $A$ and $B$ such that the normal component of the connection near the boundary is zero (apply Lemma \ref{lemma}). Consider the gauge constructed in Theorem \ref{local}, i.e. $d_A^*d_A f = 0$ and $d_B^*d_B g = 0$ with the following boundary conditions: $f|_{\partial M} = g|_{\partial M}$, $f|_{V} = g|_{V} = 1$ and $f, g$ have compact support at the boundary contained in $\Gamma$. Here $V \subset \overline{V} \subset \Gamma$ is some non-empty, connected, open subset of $\Gamma$.\footnote{We will actually see later that it is enough to have any $f$ and $g$ non-zero and equal at the boundary.} Let us define $h = \frac{f}{g}$ on the complement of the closed set $\mathcal{N}(g) = g^{-1}(0)$. We furthermore split the zero set into the critical set $\mathcal{N}_{crit}(g) = \mathcal{N}(g) \cap \{x \in M \mid dg(x) = 0\}$ and its complement in $\mathcal{N}(g)$, $S = \mathcal{N}(g) \cap \{x \in M \mid dg(x) \neq 0\}$. 

Now we consider the connections $A' = f^*(A)$ and $B' = g^*(B)$ near the set $V$, where we know $f$ and $g$ are non-zero, so these connections are well-defined. Following the recipe from before, by boundary determination and unique continuation we know that in a neighbourhood of $V$ in $M$, we have $A' \equiv B'$ and thus on this set we also have $B = h^*(A)$ or equivalently
\begin{align}\label{eqn1}
dh = (B - A)h
\end{align}
%(note $f$ and $g$ are sections of $\text{End }E$, which is a trivial line bundle)
Notice that $B = h^*(A)$ holds in the connected component $R$ of $V$ in the set $M \setminus \mathcal{N}(g) \cap M \setminus \mathcal{N}(f)$. Notice also that $d(|h|^2) = 0$ on this component by using \eqref{eqn1}, since $A$ and $B$ are unitary, so $|h|$ is constant and hence bounded on this set. This implies that the zero sets of $f$ and $g$ agree as we approach the boundary of $R$. The problem now is how to go further inside the interior of the manifold and go past the zero sets of $f$ and $g$. We will do this by a procedure we call ``drilling holes".
%. Let us also assume $p$ lies in the interior of $M$.

Let us describe this procedure. Firstly, we have that the zero set of $g$ lying in the interior of $M$ is contained in a countable union of codimension $1$ submanifolds by Lemma \ref{zeroset}; denote these manifolds by $M_1, M_2, \dotso$. Consider the following situation: we are given a point $p$ such that we have $g(p) = 0$ and $dg(p) \neq 0$ and moreover, we have $g^{-1}(0)$ locally a hypersurface of codimension one (in this case the rank of $dg$ is equal to one). By going to a tubular neighbourhood of $g^{-1}(0)$ near $p$, we may assume we are in the setting where $g = 0$ in a neighbourhood of zero in the hyperplane $\mathbb{R}^{n-1}$ and the metric satisfies $g_{in} = \delta_{in}$ for $i = 1, 2, \dotso, n$ in this coordinate system. Moreover, assume that we know $dh = h(B - A)$ or equivalently, that $f^*(A) = g^*(B)$, in the region where $\{x_n > 0\}$. Our goal is to extend this equality to the lower part of the space. 

Let us just remark that, in general, the zero set of $g$ can be of codimension one or two, depending on the rank of $dg$; however, if $dg \neq 0$ we anyway know that at least one of $d(\im g) \neq 0$ and $d(\re g) \neq 0$ holds, so the zero set is locally contained in $(\im g)^{-1}(0)$ and $(\re g)^{-1}(0)$, at least one of which is a codimension one submanifold. It can of course happen that the zero of $g$ contains an $(n-1)$-dimensional submanifold, see Figure \ref{zero_set} below for such an example (more precisely, $u$ in this example gives the real part of such a solution, with the imaginary part equal to zero).
%;also there is an $M_i$ such that the local zero set $g^{-1}(0)$ is contained in it. CAN WE ACTUALLY CLAIM THIS?
%$\begin{pmatrix} u \\ 0 \end{pmatrix}$).
%Need to explain this better -- how to choose M_i EXACTLY, because it can happen that the point p lies in e.g. in the distinct M_j..
%in one of these manifolds, say $M_i$,; Observe that since $dg(p) \neq 0$, we have that the points where $g = 0$ are all contained in $M_i$, locally around $p$.
% REMARK: DON'T WANT TO MENTION THE M_i, BECAUSE I WOULD HAVE TO DESCRIBE HOW IT IS CONSTRUCTED; THIS WAY WE JUST CONSIDER THE "WORST CASE" SCENARIO HYPOTHETICALLY. LATER, WHEN WE GET INTO THE INTERIOR, WE CONSIDER EACH M_i ONE BY ONE AND DON'T CARE ABOUT THE OTHER M_j, j \neq i, BECAUSE WE MOVE THE CURVE BY JUST A LITTLE BIT, SO IT DOESN'T INTERFERE WITH THE PREVIOUS M_j. SO WE DON'T CARE WHETHER THE ZERO SET IS LOCALLY CONTAINED IN SOME OF THE M_i, BECAUSE THERE HYPOTHETICALLY CAN BE ACCUMMULATION POINTS (JUST IN THE INTERIOR???).

By Taylor's theorem we have that $f = x_n f_1$ and $g = x_n g_1$ locally near $0$. Furthermore, $g_1 \neq 0$ in a neighbourhood of $0$ by the assumption and hence $f_1 \neq 0$ as $|h|$ is a non-zero constant in the upper space. We want to consider $A' = f^*(A)$ as before, however $f$ can be zero now and thus $A'$ not well-defined (singular), so we will consider something very similar, i.e. $A'' = x_n A'$ and $B'' = x_n B'$
\begin{align}\label{pomoc1}
A'' &= dx_n + x_n \frac{df_1}{f_1} + x_nA\\
B'' &= dx_n + x_n \frac{dg_1}{g_1} + x_nB\label{pomoc2}
\end{align}
Now both of these are well-defined and the degeneracies have cancelled with $x_n$. Let us rewrite the gauge equations for $A''$ (note that $A'$ is Yang-Mills with respect to pullback inner product by $f^*$) as follows:
\begin{align}\label{YM}
x_n^2 d^*d(A'') + x_n\big(\iota_{\nabla x_n} dA'' - d^*(dx_n \wedge A'')\big) + (-2A'' + 2A_n'' dx_n) = 0\\
%x_n^2 d^*d(A'') + x_n \star (dx_n \wedge \star (dA'')) - x_n d^*(dx_n \wedge A'') - 2 \star (dx_n \wedge \star (dx_n \wedge A'')) = 0\\
x_n d^*(A'') + A_n'' - |A''|^2 = 0
\end{align}
where $A''_n$ is the $dx_n$ component of the $1$-form $A''$. After applying $d$ to the second equation and multiplying with $x_n$, we get the form:
\begin{align}\label{gauge'}
x_n^2 dd^*(A'') + x_n \big(d^* A'' \wedge dx_n + d(A_n'') -  d(|A''|^2)\big) = 0
\end{align}
Now after adding the equation \eqref{YM} to the equation \eqref{gauge'} we get a \textit{degenerate} elliptic second order non-linear equation, which has a diagonal principal part $x_n^2 \Delta$ and every first order term multiplied with $x_n$. Notice also $A'' = B''$ for $x_n > 0$, so $A'' - B'' = O(x_n^{\infty})$ on the hyperplane $x_n = 0$.

By Corollary (11) in \cite{mazzeo}% and by emails from R. Mazzeo and C. Guillarmou%
, we deduce that there exists a unique continuation principle for such equations and hence we obtain $A'' \equiv B''$ in the lower space, by continuing from the hyperplane. More precisely, we may rewrite these non-linear equations for $A''$ and $B''$ in the form
\begin{align*}
x_n^2 \Delta A'' = w(x, A'', \nabla A'') \quad \text{and} \quad x_n^2 \Delta B'' = w(x, B'', \nabla B'')
\end{align*}
where $w$ is a smooth function in its entries. Therefore, after subtracting these two and writing $C'' = B'' - A''$, we may obtain
\begin{multline}\label{lineqn}
x_n^2 \Delta C'' = w(x, B'', \nabla B'') - w(x, A'', \nabla A'')\\ = h_1(x, A'', B'', \nabla A'', \nabla B'') C'' + h_2(x, A'', B'', \nabla A'', \nabla B'') \nabla C''
\end{multline}
by Taylor expanding the $w$ with respect to $C''$; here $h_1$ and $h_2$ are smooth in their entries and act linearly on $C''$ and $\nabla C''$, respectively. Thus, after fixing $h_1$ and $h_2$ as known functions, we may think of \eqref{lineqn} as a linear system of equations (of real dimension $2n$) in $C''$ and thus results from \cite{mazzeo} apply.

Moreover, we have that $h = \frac{f_1}{g_1}$ carries smoothly over the hyperplane and therefore we have $dh = (B - A)h$ by subtracting equations \eqref{pomoc1} and \eqref{pomoc2}, on the other side of the hyperplane. Furthermore, using the relation $d(|h|^2) = 0$ obtained from the gauge equation, we immediately get that $|h|$ is constant and thus, non-zero so we may write $B = h^*(A)$.

Finally, by using Lemma \ref{zeroset} we deduce that $B = h^*(A)$ on the whole connected component (call it $R'$) in $M \setminus \mathcal{N}(g)$ of the points in the lower space in the previously considered chart and therefore, that $h$ is non-zero on $R'$ and that the boundary of $R'$ are the points where (could be empty) $g = 0$. This ends the procedure.

Observe that we may perform this procedure at the boundary for a dense set of points $p \in Q = \Gamma \cap \mathcal{N}(g)$ to extend $h$ such that $h^*(A) = B$ near these points with $h = 1$ on the boundary. In more detail, the set $\{p \in Q \mid dg(p) = 0 \text{ or } df(p) = 0\}$ is small, in the sense that its complement is dense, by Lemma \ref{zeroset}. On this set, near a point $p$, we may use semi-geodesic coordinates and write $f = x_nf_1$ and $g = x_n g_1$ as before; then $h = \frac{f_1}{g_1}$ extends $h$ smoothly and $h = 1$ on boundary, since the DN maps agree. The boundary determination result applied to quantities $A''$ and $B''$ defined in \eqref{pomoc1} and \eqref{pomoc2} and the degenerate unique continuation result of Mazzeo now applies to equations \eqref{YM} and \eqref{gauge'}, to uniquely extend from $\partial M$, as before.

We may now drill the holes and extend $h$ together with the relation $h^*(A) = B$, starting from the component of $V$, where we may use boundary determination. The idea is that drilling the holes connects path components over the possibly disconnecting set $\mathcal{N}(g)$. Let us now give an argument that what we are left with (after drilling the holes) is path connected. 

Let us denote the complement of the zero set $T = M \setminus \mathcal{N}(g)$; obviously $M \setminus (\cup M_i) \subset T$ and $T$ open. Let $x_0 \in M^\circ$ be a point in the open neighbourhood of $V$ where $B = h^*(A)$ and $y$ be any point in $T$. Consider any path $\gamma: [0, 1] \to M$ with $\gamma(0) = x_0$ and $\gamma(1) = y$. We will construct a path $\gamma'$ from $x_0$ to $y$, lying in $T$, by slightly perturbing the path $\gamma$, such that $\gamma$ and $\gamma'$ are arbitrarily close. Let $d$ be the usual complete metric in the space $C^{\infty}([0, 1], M)$ of smooth paths with fixed endpoints $x_0$ and $y$ (see Remark \ref{pathmetric} in the appendix).

By standard differential topology (see \cite{hirsch}), there exists an arbitrarily close path $\gamma_1$ to $\gamma$ (with the same endpoints), such that $\gamma_1$ intersects $M_1$ transversally in a finite number of points $P_1, \dotso, P_k$. There are two possibilities for these points, starting e.g. with $P = P_1$:
\begin{enumerate}
\item There exists a sequence of points $p_i \in M_1$, for $i = 1, 2, \dotso$, converging to $P$, such that $g(p_i) \neq 0$ for all $i$.
\item We have $g = 0$ in a neighbourhood of $P$ in $M_1$ and a sequence of points $q_i \in M_1$ converging to $P$, such that $dg(q_i) \neq 0$.
\end{enumerate}
In the first case, we may slightly perturb $\gamma_1$, such that it goes through one of the points $p_i$ and is sufficiently close in the metric $d$. These are complementary conditions, so if the first item does not hold, then the second one does: in that case, we may still perturb $\gamma_1$ to go through one of the points $q_i$, by the above argument of drilling holes. Notice that each of the points $p_i$ or $q_i$ has a neighbourhood in $M_1$ through which we can perturb the curve and therefore, there exists an $\epsilon > 0$, such that if we move our curve by a distance less than $\epsilon$ in the $d$-metric, the resulting curve will go through this neighbourhood.

Now inductively, we may perform the same procedure for all $j = 1, 2, \dotso, k$ and, each time, taking the perturbations small enough such that it does not interfere with the previously done work -- what we obtain is $\gamma_1'$, which is sufficiently close to $\gamma_1$ and which does not hit $M_1$, minus the deleted holes. Thus we obtain a Cauchy sequence of curves $\gamma_1', \gamma_2', \dotso$ such that $\gamma_i'$ does not hit $M_1, M_2, \dotso, M_i$, minus the deleted holes. Since the space of curves is complete, we obtain a limiting curve $\gamma_i' \to \gamma'$, which lies completely in $T$ together with the drilled holes and furthermore satisfies $d(\gamma, \gamma') < \delta$ for some pre-fixed $\delta > 0$. In particular, this implies that the lengths of the curves are close, i.e. $|l(\gamma) - l(\gamma')| < \delta'$ for some $\delta' > 0$ (here $l$ denotes the length of the curve in the underlying Riemannian manifold). Let us denote the union of all of the drilled holes, i.e. neighbourhoods of some of the points $q_i$ in the item $(2)$ above, by $T_\gamma$.

Moreover, we may repeat the above argument for all paths $\gamma$, now between \emph{any} two points in $T$ -- denote the set of new drilled holes by $S_\gamma$. Then we redefine $T$ as:
%\begin{align*}
%T:= T  \bigcup_{\gamma} T_{\gamma} \bigcup_{\gamma'} S_{\gamma'}
%\end{align*}
\begin{align*}
T:= T  \bigcup_{\gamma\,\mathrm{from \,x_0\, to\, y}} T_{\gamma} \bigcup_{\gamma' \,\mathrm{from\, any\, x\, to\, any\, y}} S_{\gamma'}
\end{align*}
where the first union runs over all of the curves $\gamma$ starting at $x_0$ and ending at $y \in M^{\circ} \setminus \mathcal{N}(g)$; the second one is over all paths $\gamma'$ between points in $M^{\circ} \setminus \mathcal{N}(g)$. It is easy to see that $T \subset M^{\circ}$ is open and connected and furthermore, it satisfies the property that for any curve $\gamma$ between any two points $x, y \in T$, there exists a sequence of curves $\gamma_n$ between $x$ and $y$, lying wholly in $T$, such that $d(\gamma_n, \gamma) \to 0$ as $n \to \infty$. Also, we have $B = h^*(A)$ on $T$ by the argument of drilling holes.

Let us denote by $d_1$ the inherited metric of $T$ as a subspace of $M$ and by $d_2$ the metric in the Riemannian manifold $(T, g|_{T})$. Therefore, as a result of the above construction, we may claim the following about these metrics:\footnote{We just proved that the inherited subspace metric on $T \subset M$ and the path metric as a submanifold of a Riemannian manifold are Lipschitz equivalent with Lipschitz constant equal to $1$.}
\begin{align*}
d_2(x, y) = \inf\{l(\gamma) \mid \gamma \text{ a piecewise smooth path from $x$ to $y$ lying in $T$}\} = d_1(x, y)
\end{align*}
Notice also that we have, by the Fundamental Theorem of Calculus, if $\gamma$ is a path from $x$ to $y$ lying in $T$:
\begin{multline*}
|h(x) - h(y)| = \Big|\int_0^1 dh_{\gamma(t)}(\dot{\gamma}(t)) dt\Big| \leq \int_0^1 \big|\langle{\nabla h_{\gamma(t)},\dot{\gamma}(t)}\rangle \big| dt\\
\leq \int_0^1 |\nabla h_{\gamma(t)}|_g\cdot |\dot{\gamma}(t)|_g dt \leq C \int_0^1 |\dot{\gamma}(t)|_gdt = C \cdot l(\gamma)
\end{multline*}
by Cauchy-Schwarz, %where $\alpha^{\#}$ denotes the Musical isomorphism for a $1$-form $\alpha$ ($\alpha(v) = \langle{\alpha^{\#}, v}\rangle$ for tangent vector $v$)%
where $\nabla h$ is the gradient of $h$ and $C$ is a uniform bound on $dh$ (which follows from the global relation $dh = (B - A)h$ in $T$ and uniform bounds on $h$, $A$ and $B$). If we take the infimum over all such curves $\gamma$, we obtain the inequality $|h(x) - h(y)| \leq C d_2(x, y) = C d_1(x, y)$ and therefore obtain that $h$ is Lipschitz and so uniformly continuous over $T$.

Therefore, $h$ can be extended continuously\footnote{Here we are using the elementary fact that a uniformly continuous function can be uniquely continuously extended to the closure of its domain.} to the whole of $M$ and by inductively differentiating the relation $dh = (B - A)h$, we moreover have that all partial derivatives of $h$ can be continuously extended. That these continuous extensions of derivatives are actual derivatives of the extension of $h$ is proved in Lemma \ref{extension_lemma} in the Appendix; see also Remark \ref{rmk_boundary_extension} in the Appendix for the extension to the boundary. This proves $h^*(A) = B$ on the whole of $M$ with $h$ smooth and that $h|_{\Gamma} = 1$; $h$ also unitary. This finishes the proof.
\end{proof}

% The fact that $h$ is unitary comes from the fact that $A$ and $B$ are and from the equation $dh = h(B - A)$, as in the beginning.
%\begin{rem}\rm
%One can see that the complement of the disconnecting set $\mathcal{N}(g)$ can indeed have non-trivial topology; this justifies the use of our argument of drilling holes. We will consider the case of the zero connection and the Euclidean metric. For example, one may start with the plane $\mathbb{R}^2$ and the harmonic function $g(x, y) = e^x \sin(y)$ obtained by taking the real part of the holomorphic function $H(z) = e^z$ -- then, $\mathcal{N}(g)$ is a discrete set of horizontal lines. Furthermore, we can create an example where the manifold is compact -- consider the biholomorphism given by the M\"{o}bius map from the upper half-plane $\mathbb{H}^2$ to the open unit disk $B^2$ given by $f(z) = i \frac{1 + z}{1 - z}$. Then the function $g = \re{(H \circ f)}$ is a harmonic function on $B^2$, with a singularity at $z = 1$ and the $\mathcal{N}(g)$ is equal to a wedge of circles at $z = 1$. If we consider the restriction $g|_M$ with $M = B^2 \cap \{x \leq 0\}$, we obtain that $g|_M$ is harmonic and the complement of $\mathcal{N}(g|_M)$ in $M$ has an infinite number of components.
%\end{rem}

\begin{rem}[Topological remarks]\rm
One can see that the complement of the disconnecting set $\mathcal{N}(g)$ can indeed have non-trivial topology; this justifies the use of our argument of drilling holes. For simplicity, we will consider real harmonic functions $g$ with $\Delta g = 0$ in the open unit disk. Firstly, one may observe that there are two types of points in $\mathcal{N}(g)$: the critical and the non-critical ones. The non-critical ones are simple: they are locally contained in an analytic curve, whereas the critical ones are isolated (since they are exactly the set of points where $f' = 0$, where $f$ holomorphic and $u = \re{(f)}$) and are locally zero sets of harmonic polynomials, i.e. zero sets of $\re{((z - P)^m)}$, where $m \geq 2$ an integer. Thus at these critical points, $\mathcal{N}(g)$ is a union of $m$ analytic curves meeting at $P$ at equal angles. Also, there are no loops in $\mathcal{N}(g)$, due to the uniqueness of the Dirichlet problem and analytic continuation. Therefore, if $g$ has an analytic extension to the closed disk, there are finitely many components in the complement of $\mathcal{N}(g)$, but if no such extension exists and $g$ is zero at infinitely many points at the boundary, then we may expect infinitely many components. This is because for each such vanishing, non-critical point of $g$ at the boundary we have an ``end" coming inside the disk, which returns to the boundary at some other point, by the analysis above. See Figure \ref{zero_set} for a concrete example and \cite{walker, DCH} for further analysis.
%Therefore if we solve the Dirichlet problem:
%\begin{align*}
%\Delta u = 0, \quad u|_{0 \leq \theta \leq \pi} = \sin{\frac{1}{\theta}} \cdot e^{-\frac{1}{(\pi - \theta)\theta}}, \quad u|_{-\pi \leq \theta \leq 0} = 0
%\end{align*}
%we end up with a smooth, non-analytic function with an accummulation point at $(1, 0)$; each point of the zero set of $g$ at the boundary is non-critical and therefore has an ``end" coming inside the disk, which returns to the boundary at some other point, by the analysis above. See Figure \ref{zero_set} for a similar example.
\end{rem}

\begin{figure}[h]
   \centering
    \includegraphics[width=0.6\textwidth]{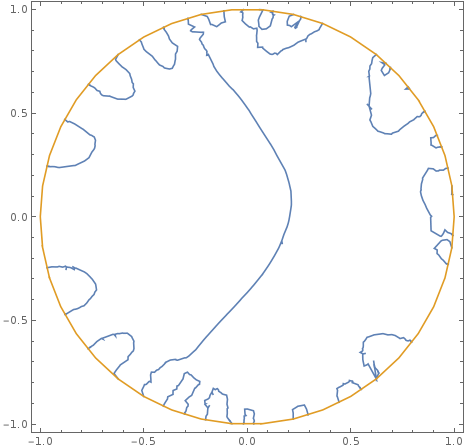}
            \captionsetup{format=hang, labelfont=sc}
    \caption{In blue -- the zero set of the harmonic function with boundary value equal to $f(\theta) = \theta \cdot \sin{\frac{100}{\theta}}$ on the unit disc, where $\theta \in (-\pi, \pi)$ is the angular coordinate. In orange -- the unit circle. The accumulation point is $(1, 0)$.}
\label{zero_set}
\end{figure}

%\begin{rem}\rm
%\textit{If} given $d^*_Ad_A F = 0$ we were able to write down an elliptic equation for $\text{det} (F)$, then all (or almost all) of the above proof would carry over to the case of vector bundle of higher rank, as in Theorem \ref{local}.
%\end{rem}
%\textit{If} given $d^*_Ad_A F = 0$ we were able to write down an elliptic equation for $\text{det} (F)$, then all (or almost all) of the above proof would carry over to the case of vector bundles of higher rank, as in Theorem \ref{local}.
\begin{rem}\rm\label{rem:minimalcondition}
Notice that the only two implications we were using in Theorem \ref{linebundle} from the equality of the DN maps for $A$ and $B$, were:
\begin{enumerate}
\item By boundary determination, the connections $A$ and $B$ have the same full jets at the boundary in suitable gauges.
\item There exist two non-zero solutions $f$ and $g$ to $d_A^*d_A f = d_B^*d_B g = 0$, such that $f|_{\partial M} = g|_{\partial M}$ and $\partial_\nu f|_\Gamma= \partial_\nu g|_\Gamma$ for a non-empty open $\Gamma \subset \partial M$.
\end{enumerate}
We then showed that the quotient $\frac{f}{g}$ is the gauge between $A$ and $B$.
\end{rem}
%with $\text{supp}(f|_{\partial M}) = \text{supp}(g|_{\partial M})

\begin{rem}[Alternative boundary extension]\rm\label{newrmk_boundary_extension}
A different approach to extension of the gauge to the boundary, by using the partial differential equations that it satisfies (that is $H^{-1}dH + H^{-1}AH = B$), can be found in Proposition 4.7 from \cite{KOP}. There, the authors take $A$ and $B$ to a gauge with no normal component (as in Lemma \ref{lemma}), so that the new gauge $H'$ is independent of the normal variable from the equation it satisfies and can clearly be extended smoothly. Note that the same proof works in the non-unitary case.
\end{rem}

By a careful analysis of the proof of Theorem \ref{linebundle}, we have the following result:

\begin{prop}\label{prop:linebundlenonunitary}
Conclusions of Theorem \ref{linebundle} hold in the case of general $GL(m, \mathbb{C})$ Yang-Mills connections.
\end{prop}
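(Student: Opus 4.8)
The plan is to go through the proof of Theorem~\ref{linebundle} and isolate the (few) places where unitarity of $A$ and $B$ was actually used. There are exactly two: at the very end, the relation $d(hh^*)=[hh^*,A]$ was used to upgrade the gauge to a unitary automorphism; and, crucially, the identity $d(|h|^2)=0$, derived from $dh=(B-A)h$, was used to conclude that $|h|$ is locally constant, hence that $h=f/g$ is bounded and bounded away from zero on each connected component of $M\setminus(\mathcal{N}(f)\cup\mathcal{N}(g))$. This last fact was invoked three times: to match the zero sets of $f$ and $g$ along the boundary of such a component; in the drilling chart, where $f=x_nf_1$, $g=x_ng_1$ with $g_1\neq0$, to deduce $f_1\neq0$ on $\{x_n=0\}$; and in the closing Lipschitz estimate for $h$ over the set $T$. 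For a general, not necessarily unitary, connection on the trivial line bundle I would simply drop the unitarity of the gauge (the resulting $h\in C^\infty(M;\mathbb{C}^*)$ need not lie in $U(1)$), and replace the ``$|h|$ constant'' input by the following estimate.

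On any connected open set $U$ on which $h$ is defined, non-vanishing, and satisfies $dh=(B-A)h$, differentiating gives $d(B-A)=0$ on $U$ (using $(B-A)\wedge(B-A)=0$ for scalar $1$-forms; in particular $F_A=F_B$ there). Since $A$ and $B$ are smooth connections on the trivial bundle over the compact $M$, the $1$-form $B-A$ is globally smooth, hence bounded; fixing $x_0\in U$, for any piecewise-smooth path $\gamma$ in $U$ from $x_0$ to $x$ one has
\[
\log h(x)=\log h(x_0)+\int_\gamma(B-A),\qquad |h(x)|=|h(x_0)|\exp\!\Big(\re\!\int_\gamma(B-A)\Big),
\]
the integral being path-independent; thus $e^{-L\|B-A\|_\infty}|h(x_0)|\le|h(x)|\le e^{L\|B-A\|_\infty}|h(x_0)|$, with $L$ bounding the length of a connecting path. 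This yields exactly what was needed: $h$ is bounded and bounded away from $0$ on bounded subsets of $U$, so the zero sets of $f$ and $g$ still match along $\partial U$; in the drilling chart $h=f_1/g_1$ is continuous with $\liminf|h|>0$ as $x_n\downarrow0$ and $g_1\neq0$, forcing $f_1\neq0$ at $x_n=0$; and the final bound $|h(x)-h(y)|\le C\,d_1(x,y)$ on $T$ holds with $C$ a uniform bound on $dh=(B-A)h$, which we still control.

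Everything else in the proof of Theorem~\ref{linebundle} is insensitive to unitarity and I would quote it unchanged. Lemma~\ref{lemma} is a parallel-transport construction not involving the Hermitian structure (cf.\ Remark~\ref{newrmk_boundary_extension}); Theorem~\ref{local} is already stated and proved for arbitrary Yang-Mills connections, supplying the germ of $h$ near $V$; the boundary-determination results of Section~3 apply verbatim, since the factorisation \eqref{factorisation} and the symbol recursion of Lemma~\ref{factorisationlemma} use only that $d_A^*d_A+Q$ has scalar principal symbol $|\xi|_g^2\times Id$, and Theorem~\ref{boundarydet} (together with Remarks~\ref{n=2boundarydet} and~\ref{boundarydettoponontriv}) recovers the full jet of the connection regardless of unitarity; Lemma~\ref{zeroset} applies to $g$ because the lower-order part of $d_B^*d_B$ is linear in $(g,dg)$ and so respects the zero section; and the drilling step (equations \eqref{YM}, \eqref{gauge'} and Mazzeo's degenerate unique continuation), the connectedness/approximation argument for $T$, and the final smooth extension to $\overline{M}$ are all either purely analytic or topological. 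Assembling these yields $h\in C^\infty(M;\mathbb{C}^*)$ with $h|_\Gamma=Id$ and $h^*A=B$.

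I expect the only real, if minor, obstacle to be bookkeeping in two spots: verifying that the boundedness-of-$h$ estimate above really is enough wherever the original argument said ``$|h|$ constant'' — especially that it still produces $f_1\neq0$ in the drilling chart and a Lipschitz (not merely continuous) extension of $h$ — which the display handles; and checking boundary determination for a non-unitary connection, where the twisted codifferential is built from $-A^*$ rather than $-A$, so the first- and zeroth-order terms in \eqref{expansion'} change. But this change is below the principal symbol, so the factorisation \eqref{factorisation} still holds with $Q$ absorbing the discrepancy and Theorem~\ref{boundarydet} goes through after cosmetic modifications; no new idea is required.
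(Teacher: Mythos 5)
Your proposal is correct, and it reaches the conclusion by a noticeably different mechanism than the paper's own proof, even though both ultimately rest on the same observation that $dh=(B-A)h$ forces $d(B-A)=0$ (hence $dA=dB$) wherever the gauge relation holds. You replace the ``$|h|$ constant'' input by two-sided bounds on $|h|$ obtained from $\log|h(x)|-\log|h(x_0)|=\int_\gamma\re(B-A)$, and then rerun the original drilling and Lipschitz-extension arguments verbatim. The paper instead does two things: near a singular hypersurface of $g$ it first passes to a temporal gauge in which the normal components of $A$ and $B$ vanish, so that $\partial_{x_n}h=0$ and $h=f/g$ extends smoothly and non-singularly across $\{x_n=0\}$ with no modulus estimate at all (cf.\ Remark \ref{newrmk_boundary_extension}); and for the final extension over $M^\circ\setminus T$ it uses $dA=dB$ to solve $d\varphi=B-A$ in a small ball $U$ and writes $h=c\,e^{\varphi}$ on the connected set $U\cap T$, which extends $h$ smoothly by inspection rather than via uniform continuity. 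The paper's route is shorter and avoids the two bookkeeping points you rightly flag: your integral estimate is only ``path-independent'' in its real part (the imaginary periods lie in $2\pi i\mathbb{Z}$), which is all you use, and the constant $L$ must be taken locally (near a drilling point, or on $T$ where the curve-approximation construction controls lengths), since path lengths inside an arbitrary connected component of $M\setminus(\mathcal{N}(f)\cup\mathcal{N}(g))$ need not be uniformly bounded; with those localisations your estimate does deliver everything the original proof extracted from $|h|$ being constant. Your remark about boundary determination for the non-unitary codifferential is also apposite -- the paper leans implicitly on Theorem \ref{local} being stated for general Yang--Mills connections and does not spell this out.
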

\begin{proof}
We use the same notation as in the original proof. The first issue is that we do not know that $d(|h|^2) = 0$ any more, so a priori $f$ and $g$ need not have the same zero set. We address this as follows.%However, we may still drill through the singular hypersurfaces and \emph{without} making the assumption that $A$ and $B$ are unitary.

By gauge transforming $A$ and $B$ locally near a zero set hypersurface $g^{-1}(0)$ of $g$ (or $f^{-1}(0)$ for $f$), we may assume that the $dx_n$ components of $A$ and $B$ vanish and $B = h^*(A)$ in $\{x_n > 0\}$. Then by Remark \ref{newrmk_boundary_extension}, we see that $h$ is independent of $x_n$ and so extends in a non-singular way beyond $\{x_n = 0\}$ -- thus $\frac{f}{g}$ also extends in a smooth and non-zero way. From that point, we may apply the earlier argument in the same way. 
% = M^{\circ}\setminus (\mathcal{N}_{crit}(g) \cup \mathcal{N}_{crit}(f))

By drilling along paths as before, we are left with $h: T \to \mathbb{C}$ such that $h^*(A) = B$ ($h$ is nowhere zero) and $h|_{\Gamma} = 1$, where $T$ is dense, connected and open and moreover, $T$ satisfies the property that given a curve $\gamma$ in $M$ with endpoints in $T$, there exist arbitrarily close curves to $\gamma$ with the same endpoints, lying wholly in $T$. Notice that $dh = (B - A)h$ on $T$ implies $dA = dB$ by density, which by the proof of Theorem 7.3. from \cite{cek1} immediately proves the claim. Alternatively, we will extend $h$ to a gauge on $M^\circ$ by proving uniform bounds on $h$ on compact subsets of $M^\circ$.
%d(|h|^2) = \big((B + B^*) - (A + A^*))\big) |h|^2

Take a point $p \in M^\circ \setminus T$. Note that we have in $T$
\[dh = (B - A)h\]
So if we take a small ball $U$ around $p$, we have a logarithm $f$ in $U$, by solving $df = B - A$ (such an $f$ exists as $dA = dB$). Then by uniqueness we have $h = c \cdot e^f$ for a constant $c$, as $U \cap T$ is connected. So $h$ extends smoothly on $U$ and by density, we have $h^*(A) = B$ on $U$. So $h$ extends to $M^\circ$, such that $h^*(A) = B$. We are left to observe that Remark \ref{newrmk_boundary_extension} extends $h$ smoothly to $\partial M$.
%It seems possible to use the outlined argument to extend over $\mathcal{N}_{crit}(g)$ and prove Theorems \ref{linebundle} and \ref{analytic_m>1} even for non-unitary connections, but it was more convenient for us to use the uniform continuity argument instead.
\end{proof}

\section{Recovering a Yang-Mills connection for $m > 1$ via geometric analysis}

In this section, we prove Theorem \ref{analytic_m>1} for rank $m > 1$ bundles in the analytic category; the analytic assumption is technical -- the main obstacle to solving for rank $m > 1$ in the smooth case is the possibility that the zero set of $\det{F}$ for $F$ satisfying $d^*_Ad_A F = 0$ could potentially be large. For this, it suffices to prove that the determinant does not vanish to an infinite order (if non-trivial) at any point, since by some general theory the zero set is then contained in an $(n-1)$-rectifiable set \cite{bar}. However, due to the recent work of the author \cite{cek2} we have strong evidence and some counterexamples to even the weak unique continuation principle. These counterexample seem not to be generic, so we hope that this method can still be pursued.

In addition to this, we would like to point out that it is no longer true that the critical zero set of $\det{F}$ is $(n-2)$-rectifiable, as in the case $m = 1$; a class of counterexamples is given by $F = \begin{pmatrix} f & 0\\ 0 & f \end{pmatrix}$, where we have that $\mathcal{N}_{crit}(\det{F}) = \mathcal{N}_{crit}(f^2)$ contains the set where $f$ vanishes (since $d(f^2) = 2f df$). Therefore if $f$ vanishes on an $(n-1)$-dimensional set, then the critical set is also $(n-1)$-dimensional. One such example is given by $M = \mathbb{R}^2$ and $f(x, y) = x$ which vanishes along the $y$-axis and solves $\Delta_{\mathbb{R}^2}(x) = 0$.

%Here a function $f$ on a manifold $M$ vanishes to infinite order at a point $x_0 \in M$ if in any local coordinates, we have $\frac{\partial^{\alpha} f}{\partial x_1^{\alpha_1} \cdots \partial x_n^{\alpha_n}} = 0$ for all multindices $\alpha = (\alpha_1, \alpha_2, \dotso, \alpha_n)$ (this is invariant of the choice of coordinates).

%Furthermore, by the results in \cite{cek2} the author concludes that there are counterexamples to WUCP

Therefore, here we consider the case of analytic functions and generalise the proof of Theorem \ref{linebundle}. Analytic functions satisfy the SUCP by definition and in addition, the zero set is given by a countable union of analytic submanifolds of codimension one. This can easily be seen by considering the order of vanishing at a point and by observing that locally, every point in the zero set is contained in $(\partial^{\alpha} h)^{-1}(0)$, where $h$ is the analytic function and $\alpha$ is a multi-index such that $\nabla(\partial^{\alpha} h) \neq 0$.

Note that if $A$ and $g$ are analytic, one has $F$ satisfying $d_A^*d_A F = 0$, which is an elliptic system with analytic coefficients and thus by a classical result of Morrey \cite{morrey} its entries are analytic. Therefore, the determinant is analytic also and thus cannot vanish to the infinite order at a single point, if it is non-trivial. Unless otherwise stated, for the rest of the section $(M, g)$ is a compact analytic (in the interior) Riemannian $n$-manifold with boundary. We first prove a result about the zero set of the determinant of a matrix solution where $A$ is Yang-Mills and not necessarily analytic:

\begin{lemma}\label{recYM-analytic}
Let $E = M \times \mathbb{C}^{m}$ a Hermitian vector bundle and $A$ a unitary Yang-Mills connection on $E$. Then any solution $F: M \to \mathbb{C}^{m \times m}$ to $d_A^*d_A F = 0$ with $\det{F}$ non-zero has $\mathcal{N}(\det{F})$ to be $(n-1)$-rectifiable. Moreover, $\det{F}$ satisfies the strong unique continuation property.
\end{lemma}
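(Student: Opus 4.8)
The plan is to reduce both assertions to the real-analytic setting by passing to a local Coulomb gauge, exploiting that $g$ is analytic in the interior. First I would fix an interior point $p\in M^\circ$ and a small geodesic ball $B=B(p,r)$; since $\|F_A\|_{L^{n/2}(B)}$ can be made as small as we like by shrinking $r$, Uhlenbeck's gauge-fixing theorem \cite{uhlenbeck} supplies a unitary gauge $U$ on $B$ (smooth after the usual elliptic bootstrapping) with $A':=U^*(A)$ in Coulomb gauge, $d^*A'=0$. Being Yang--Mills is gauge-invariant, so $A'$ solves the elliptic system obtained by adjoining $(D_{A'})^*F_{A'}=0$ to $dd^*A'=0$ (as in Lemma \ref{gauge_constr} and Theorem \ref{local}); this system is semilinear, with diagonal principal part equal to the Hodge Laplacian $\Delta$ and with all coefficients built from the analytic metric $g$ and polynomial in $(A',\nabla A')$. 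Morrey's theorem on analyticity of solutions of analytic (nonlinear) elliptic systems \cite{morrey} then makes $A'$ real-analytic on a slightly smaller ball.

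Next I would transfer this to $F$. If $F$ solves $d_A^*d_AF=0$, applying the gauge rule to each column of $F$ and using that $U$ is unitary (so $d_{A'}^*=U^{-1}d_A^*U$, whence $d_{A'}^*d_{A'}=U^{-1}d_A^*d_A\,U$) shows that $F':=U^{-1}F$ solves $d_{A'}^*d_{A'}F'=0$. Since $A'$ is now analytic, this is a linear elliptic equation with analytic coefficients, so analytic hypoellipticity \cite{morrey} gives that every entry of $F'$, and hence $\det F'$, is real-analytic on $B$. As $\det F=\det U\cdot\det F'$ with $\det U$ smooth and nowhere vanishing, $\det F$ and $\det F'$ have the same zero set on $B$ and the same order of vanishing (finite or infinite) at each point of $B$.

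From here both conclusions are standard. For the strong unique continuation property: the set $Z\subset M^\circ$ of points where $\det F$ vanishes to infinite order is closed, and by the previous paragraph it is open as well, because at $p\in Z$ the analytic function $\det F'$ vanishes to infinite order at $p$ and hence vanishes identically near $p$, forcing $\det F\equiv 0$ near $p$; since $M^\circ$ is connected and $\det F\not\equiv0$, this gives $Z=\varnothing$, i.e.\ $\det F$ vanishes to finite order everywhere. For $(n-1)$-rectifiability: cover $M^\circ$ by countably many Coulomb balls $B_k$ as above; on each $B_k$ the non-trivial real-analytic function $\det F'$ has zero set locally contained in a hypersurface of the form $(\partial^\beta\det F')^{-1}(0)$ with $\nabla(\partial^\beta\det F')\neq0$ (choosing $\beta$ of length one less than the vanishing order), which is exactly the argument recalled in the paragraph preceding this lemma, so $\mathcal N(\det F)\cap M^\circ$ lies in a countable union of analytic hypersurfaces. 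Near $\partial M$ the metric, and hence $A'$, is only smooth, but in every application of the lemma $F|_{\partial M}$ is non-singular, so $\mathcal N(\det F)$ is a compact subset of $M^\circ$ and no boundary analysis is required.

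The step I expect to be the crux is the analyticity of the Coulomb-gauged connection $A'$: this combines the existence of a good local gauge (Uhlenbeck) with Morrey's analytic regularity for the \emph{nonlinear} Yang--Mills system, and requires one to identify the principal part as the scalar analytic operator $\Delta$ with all nonlinearities of first order. Once that is in place, pushing analyticity through to $F'$ and to $\det F$, and then reading off the strong unique continuation property and the rectifiability of the zero set, are routine.
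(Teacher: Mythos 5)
Your proposal is correct and follows essentially the same route as the paper: pass to a local Uhlenbeck Coulomb gauge, use Morrey's analytic regularity to make $A'$ and then $F'=X^{-1}F$ real-analytic, and transfer the conclusions through the identity $\det F=\det X\cdot\det F'$ with $\det X$ nowhere vanishing. The extra details you supply (the open--closed argument for the SUCP, the countable cover by Coulomb balls, and the remark about the boundary) are consistent elaborations of the paper's terser argument.
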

\begin{proof}
This is a local result, so assume we have a point $p \in M^\circ$ in the interior and take a small coordinate ball $B^n(\epsilon)$ around $p$, such that $\lVert A \rVert_{L^{n/2}(B^n(\epsilon))}$ is small enough; by a dilation we may also assume $B^n$ is the unit ball and we also have $\lVert A \rVert_{L^{n/2}(B^n)}$ stays the same as for the smaller ball, by a computation. By a result of Uhlenbeck \cite{uhlenbeck}, we have a gauge automorphism $X: B^n \to U(m)$ that takes $A$ to $A' = X^*(A)$ with $d^*(A') = 0$. In this particular gauge, the Yang-Mills equations become elliptic and therefore, $A'$ is analytic.

Similarly, since $d_A^*d_A F = 0$, we have $d_{A'}^*d_{A'} F' = 0$, where $F' = X^{-1}F$ -- thus $F'$ is also analytic. Moreover, $\det{F'} \det{X} = \det{F}$ and so $\mathcal{N}(\det{F}) = \mathcal{N}(\det{F'})$ on $B^n$, as $X$ is always invertible; since $\det{F'}$ is analytic, we obtain the first part of the result.

Finally, from the relation $\det{F'} \det{X} = \det{F}$ and the fact that $\det{X}$ is non-zero on $B^n$, we immediately get that $\det{F}$ vanishes up to order $k$ if and only if $\det{F'}$ vanishes up to order $k$ -- thus $\det{F}$ satisfies the SUCP, as $\det{F'}$ does.
\end{proof}

This means that on $M^{\circ}$ we have $\mathcal{N}(\det{G}) \subset \cup_i M_i$ for $M_i$ a countable family of analytic submanifolds of codimension one, where $G$ solves $d_B^*d_B G = 0$ and represents the gauge we used in the previous section. Notice that $G^*(B)$ then satisfies an elliptic system (as before), but with analytic coefficients -- therefore $G^*(B)$ is also analytic, but only on the set where $G$ is invertible.

To overcome this, we use the method of proof of the $m = 1$ case, Theorem \ref{linebundle}, and the main difference is that now we will be able to use analyticity to uniquely continue the solution when drilling hyperplanes, whereas before we relied on the unique continuation property of elliptic equations.

% (and also that $F$ and $G$ are invertible in the interior of their support)
\begin{proof}[Proof of Theorem \ref{analytic_m>1}]
Assume we have the gauges $F$ and $G$ that solve $d_A^*d_A F = 0$ and $d_B^*d_B G = 0$ with $F|_{\partial M} = G|_{\partial M}$, $\text{supp}(F|_{\partial M}) = \text{supp}(G|_{\partial M}) \subset \Gamma$ and equal to identity on an open, non-empty subset $V$ of $\Gamma$. Then $F^*(A) = G^*(B)$ in the neighbourhood $U$ of $V$ in the manifold, as in Theorem \ref{local}, by unique continuation; equivalently, we have $H^*(A) = B$ where $H = FG^{-1}$. We also have that $H$ is unitary.

%By the end of the proof of Theorem \ref{local}, we also have that $H$ is unitary.

%Moreover, we have that $H^*(A) = B$ implies by definition that $dH = HB - AH$ and $d(H^*) = -BH^* + H^*A$, by the unitary property of connection matrices -- combining the two, we have:
%\begin{align*}
%d(HH^*) = [HH^*, A]
%\end{align*}
%where $[\cdot, \cdot]$ is the commutator. This first order system has a unique solution, which is given by $HH^* = Id$, as $H|_V = Id$ and thus $H$ is unitary whenever $H^*(A) = B$.

We may suppose that $\mathcal{N}(\det{G}) \subset \cup_i M_i$ for $M_i$ analytic submanifolds of codimension one, by Lemma \ref{recYM-analytic}. Let us now prepare the terrain for drilling the holes -- consider a point $p$ in $M_i$ for some $i$ and assume $\det{G} = 0$ near $p$ in $M_i$, such that the following property holds:
\begin{equation}\label{eqn2}
\frac{\partial^j (\det{G})}{\partial x_n^j} = 0 \quad \text{ for }\quad j=0, 1, \dotso, k-1
\end{equation}
in a neighbourhood of $p$ in $M_i$, for some $k$; we also ask that $\frac{\partial^{k} (\det{G})}{\partial x_n^{k}}(p) \neq 0$. Here we are using the analytic chart given by coordinates on $M_i$ near $p$ and the $x_n$ coordinate given by following the normal geodesics (which is also analytic). We make the standing assumption that $F$ and $G$ are invertible for $x_n > 0$ in this coordinate system and that $F^*(A) = G^*(B)$ in the same set.

This means that near $p$, by Taylor's theorem we have $\det{G} = x_n^k g_1$ for some $g_1$ that satisfies $g_1(p) \neq 0$ -- therefore locally at $p$, $\mathcal{N}(\det{G})$ is contained in $M_i$. Since $H$ is unitary for $x_n > 0$, we have $H = FG^{-1} = \frac{F\adj{G}}{x_n^k g_1}$ is bounded on this set and therefore $F\adj{G} = x_n^k H_1$ for some smooth $H_1$ near $p$ -- we get $H = \frac{H_1}{g_1}$ locally, which means that $H$ extends smoothly to the other side of $M_i$ in the proximity of $p$. Moreover, as $H$ unitary we have $|\det{H}| = 1$ at $p$ and so $H$ is invertible near $p$.

To use the real-analyticity, we must transform $A$ and $B$ such that they are locally analytic -- we do this by constructing the Coulomb gauge automorphisms (unitary) $X$ and $Y$ locally near $p$ such that $A' = X^*(A)$ and $B' = Y^*(B)$ and moreover, that $d^*(A') = d^*(B') = 0$ (by the proof of Lemma \ref{recYM-analytic}). Then $A'$ and $B'$ are analytic as in the previous lemma and moreover, we have $F' = X^{-1}F$ and $G' = Y^{-1}G$ satisfying $d_{A'}^* d_{A'} F' = 0$ and $d_{B'}^*d_{B'} G' = 0$ -- therefore $F'$ and $G'$ are analytic, as well.

Thus we may write $H' = X^{-1}F G^{-1}Y$ and by rewriting $H^*(A) = B$ (by assumption) we get $H'^*(A') = B'$ for $x_n > 0$ in a neighbourhood of $p$. Let us now observe that $H'$ also smoothly (analytically) extends over the hyperplane $M_i$ -- this is because, by Taylor expanding $\det{(Y^{-1}G)} = \frac{\det{G}}{\det{Y}}$, we get
\begin{align*}
H' = X^{-1}F \cdot \frac{\adj\big({Y^{-1}G\big)}}{g' x_n^k}
\end{align*}
where $g' = \frac{g_1}{\det{Y}}$ is analytic, so $g' \neq 0$ near $p$. However, we know $H'$ is bounded near $p$, since $H$, $X$ and $Y$ are. Thus $X^{-1}F \cdot \adj{(Y^{-1}G)} = F' \cdot \adj{G'} = x_n^k H''$ for some analytic $H''$, by looking at the expansion of $F' \adj{(G')}$ -- in conclusion, $H' = \frac{H''}{g'}$ analytically extends near $p$ and $H'$ is also invertible at $p$ as $H$, $X$ and $Y$ are.

Finally, it is easy now to see that $(H')^*(A') \equiv B'$, since both sides are analytic near $p$ and $(H')^*(A') = B'$ for $x_n > 0$; equivalently $H^*(A) \equiv B$ near $p$. This ends the drilling argument and we may repeat the part of the argument of Theorem \ref{linebundle} which perturbs the curve by an arbitrarily small amount so that it goes through the holes.

Let us briefly describe the analogous procedure from Theorem \ref{linebundle}. Take a base point $x_0 \in U \cap M^{\circ}$ and consider a path $\gamma$ lying in the interior, from $x_0$ to some point $y \in M^{\circ}$. We perturb $\gamma$ such that it intersects $M_1$ transversally at $P_1, \cdots P_k$ ($k$ can be zero). At $P_1$, consider the tubular neighbourhood (analytic) given by following geodesics perpendicular to $M_1$. If there exists a sequence of points $p_j \in M_1$ that converges to $P_1$ and $\det{G} \neq 0$ at every $p_j$, we may perturb $\gamma$ slightly and get it to pass through one of the points $p_j$. Otherwise, inductively, since $\det{G}$ satisfies the SUCP by Lemma \ref{recYM-analytic}, there exists a positive integer $k$ such that $\frac{\partial^i (\det{G})}{\partial x_n^i} = 0$ for $0 \leq i \leq k-1$ in a neighbourhood of $P_1$ and there exists a sequence of points $p_j \in M_1$ that converge to $P_1$ and $\frac{\partial^{k} (\det{G})}{\partial x_n^{k}} \neq 0$ at each $p_j$. This leaves us in the setting \eqref{eqn2} from the previous paragraph, suitable for drilling the holes -- inductively, we perturb $\gamma$ such that it intersects the $M_i$ in the drilled holes.

Thus we obtain a smooth (analytic in the interior) extension of $H = FG^{-1}$ to the whole of $M$, such that $H^*(A) = B$ and $H|_{V} = Id$.
%We are left to verify that $H|_{\Gamma} = Id$.
%SEEMS WRONG, BECAUSE CANNOT GUARANTEE SUCP FOR \det(F) even at the boundary!!!

%Consider a point $p \in \Gamma \setminus \bar{V}$ such that $\frac{\partial^i F}{\partial x_n^i} = \frac{\partial^i G}{\partial x_n^i} = 0$ in a neighbourhood of $p$ in $\Gamma$, for $i = 0, 1, \dotso, k-1$, but $\frac{\partial^{k} F}{\partial x_n^{k}} = \frac{\partial^{k} G}{\partial x_n^{k}} \neq 0$ in a neighbourhood of $p$; we are working in the semi-geodesic coordinate system near this point. Notice that we know that $F - G = O(x_n^\infty)$ by equation \eqref{eqn3} from previous section.

%Therefore, similarly to the previous paragraph, we may write $H = \frac{H_1}{g_1}$ for $H$ near $p$, where $F \adj{G} = x_n^k H_1$ and $\det{G} = x_n^k g_1$ and $g_1(p) \neq 0$. Notice also that, by definition:
%\begin{align*}
%H_1 = \frac{\partial^k (F\adj{G})}{\partial x_n^k} = \frac{\partial^k (G\adj{G})}{\partial x_n^k} = \frac{\partial^k (\det{G})}{\partial x_n^k} \times Id = g_1 \times Id
%\end{align*}
%since $F$ and $G$ have the same full jet near $p$; thus $H = Id$ near $p$. Finally, we are left to observe that such points $p$ are dense outside of the support of $F$ and $G$.

To get the wanted gauge with $H|_{\Gamma} = Id$, we will need a slightly different argument, because we do not know if $\det{F}$ and $\det{G}$ vanish to infinite order at the boundary, as we did not assume analyticity up to the boundary. We will construct a sequence of matrix functions $H_i$ such that $H_i^*(A) = B$ and use a compactness argument to take the limit. Consider nested open sets $V_i$, with $\emptyset \neq V_1 \subsetneqq \overline{V}_1 \subsetneqq V_2 \subsetneqq \overline{V}_2 \subsetneqq \dotso \subsetneqq  \Gamma$ with the property $\cup_i V_i = \Gamma$. Construct appropriate $F_i$ and $G_i$ supported in $\Gamma$, such that $F_i|_{V_i} = G_i|_{V_i} = Id$, solving $d_A^*d_A F_i = 0$ and $d_B^*d_B G_i = 0$ and setting $H_i = F_iG_i^{-1}$ -- by the argument above $H_i^*(A) = B$ and $H_i|_{V_i} = Id$. Now the important property that the gauges satisfy is that they are unitary, hence bounded and they satisfy $dH_i = H_i B - A H_i$ so that $dH_i$ are uniformly bounded. By inductively differentiating this relation, we get that all derivatives of $H_i$ are uniformly bounded on $M$. By the Arzel\`{a}-Ascoli theorem (or the Heine-Borel property of $C^{\infty}(M)$) we get a convergent subsequence with a limit $H \in C^{\infty}(M; U(m))$, $H|_{\Gamma} = Id$ and $H^*(A) = B$. This finishes the proof.
\end{proof}

\begin{rem}\rm\label{rem:smoothm>1}
\emph{If} we found a smooth solution $F$ to $d_A^*d_A F = 0$ with $\det F$ not vanishing to infinite order at any point \emph{and} proved that the unique continuation property from a hyperplane holds for degenerate elliptic systems, with degeneracies of the form $x_n^{2k}\Delta_g \times Id + x_n^k F_1 + F_0$, where $F_0$ and $F_1$ are zero and first order matrix operators, respectively and for \emph{all} $k$ positive integers -- then we would be able to prove uniqueness in the $m > 1$ case in the smooth category, by following the proofs of Theorems \ref{linebundle} and \ref{analytic_m>1}. Note that the SUCP property of $\det F$ is analysed in more detail in \cite{cek2}.
\end{rem}
%\emph{If} we were able to prove that the determinant $\det{F}$ satisfies the strong unique continuation property (where we found a smooth solution $F$ to $d_A^*d_A F = 0$ with $\det F$ satisfying the SUCP,) \emph{and} that the unique continuation property from a hyperplane holds for degenerate elliptic systems, with degeneracies of the form $x_n^{2k}\Delta_g \times Id + x_n^k F_1 + F_0$, where $F_0$ and $F_1$ are zero and first order matrix operators, respectively and for all $k$ positive integers -- then we would be able to solve the $m > 1$ case in the smooth category, by following the proofs of Theorems \ref{linebundle} and \ref{analytic_m>1}.
%RELEVANT?
%\begin{rem}\rm
%As mentioned, while $A' = F^*(A)$ above satisfies an elliptic equation with analytic coefficients and hence is analytic (and so is $F$), a problem appears when we approach the singular set of $F$. Then we do not know any more that $A'$ is analytic and cannot apply analyticity directly to conclude $F^*(A) \equiv G^*(B)$. However, as we have seen in the above proof we may use the Coulomb gauge locally to get around this issue.
%\end{rem}
%\begin{•}

\section{The case of arbitrary bundles via Runge approximation}

Here we cover the case of topologically non-trivial vector bundles, by using a suitable version of Runge approximation to construct the gauges which are non-singular along a curve. The idea is to show that the connections have the same holonomy, hence they are equivalent. This proof uses more information from the DN map than do the proofs in previous two sections, since it relies on Runge approximation (see Appendix \ref{app:B}).

We assume $E$ is a Hermitian vector bundle of rank $m$ over a compact $n$-dimensional manifold $(M, g)$ with boundary, equipped with a unitary connection $A$. Furthermore, let $\Gamma \subset \partial M$ be a non-empty open set. We denote $\Lapl = d_A^*d_A$.

\begin{lemma}\label{lemma:runge}
    Let $K \subset M^\circ$ be an embedding of a closed interval $I$. Then there exist smooth harmonic sections (w.r.t. $\Lapl$) $s_1, \dotso, s_m$ with $\supp(s_i|_{\partial M}) \subset \Gamma$ for $i = 1, 2, \dotso, m$ and such that
    \begin{equation}
        \spann{\{s_1(x), \dotso, s_m(x)\}} = E(x)
    \end{equation}
    for $x \in K$.
    \begin{proof}
        Let $T$ be a tubular neighbourhood of $K$ embedded in $M^\circ$. Then $T \cong I \times B$ ($B$ is the $(n-1)$-dimensional unit ball). We consider a smaller tube $T'$ contained in $T$ such that $K \subset \partial T'$; we smooth out $T'$ a little bit, keeping $K$ on its boundary.
        
        Since $E|_{T'} = T' \times \mathbb{C}^m$ is trivial, we may solve the Dirichlet problem
        \begin{align}
            \Lapl r_i &= 0 \, \text{ in } \, T'\\
            r_i|_{\partial T'} &= e_i
        \end{align}
        for $i = 1, 2, \dotso, m$ and $e_i \in \mathbb{C}^m$ the $i$-th coordinate vector. Fix $\varepsilon > 0$. Then by Corollary \ref{cor:rungecor}, we get a family of smooth solutions $s_i \in C^\infty(\overline{M}; E)$ solving
        \begin{align}
            \Lapl s_i &= 0 \, \text{ in } \, M\\
            \supp(s_i|_{\partial M}) &\subset \Gamma\\%\quad \text{ and } \quad \widetilde{s_i}
            \lVert{s_i - r_i}\rVert_{C^0(T')} &< \varepsilon
        \end{align}
        If we take $\varepsilon$ small enough, then $\spann\{s_1(x), \dotso, s_m(x)\} = E(x)$ for $x \in K$, by the construction. This finishes the proof.
    \end{proof}
\end{lemma}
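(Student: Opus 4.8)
The plan is to reduce the claim to a single application of the Runge-type approximation result from Appendix \ref{app:B} (quoted here as Corollary \ref{cor:rungecor}), after setting up the right local model solutions on a tube around $K$. First I would choose a tubular neighbourhood $T$ of the embedded interval $K$ inside $M^\circ$; since $K$ is contractible, $E|_T$ is trivial, so we fix a trivialisation $E|_T \cong T \times \mathbb{C}^m$. Because the Runge approximation only lets us control a solution on a \emph{compact} subset whose complement is connected (i.e. it should not separate $M$), I would pass to a slightly shrunken tube $T'$, smoothed at the corners, arranged so that $K \subset \partial T'$ rather than $K \subset (T')^\circ$; this placement of $K$ on the boundary of $T'$ is what makes $\overline{M}\setminus T'$ connected (no ``pocket'' is cut off) and hence makes the approximation theorem applicable.

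Next I would build the local target sections: on $T'$ solve the Dirichlet problems $\Lapl r_i = 0$ with $r_i|_{\partial T'} = e_i$, the $i$-th constant coordinate section, for $i = 1, \dots, m$; well-posedness of this is standard since $\ker \Lapl = 0$ on a manifold with boundary. At points of $K \subset \partial T'$ the boundary trace forces $r_i(x) = e_i$, so $\{r_1(x), \dots, r_m(x)\}$ is literally the standard basis of the fibre $\mathbb{C}^m$ there, in particular a spanning set. Then for a small $\varepsilon > 0$ I would invoke Corollary \ref{cor:rungecor} to produce global solutions $s_i \in C^\infty(\overline{M}; E)$ with $\Lapl s_i = 0$ on $M$, $\supp(s_i|_{\partial M}) \subset \Gamma$, and $\lVert s_i - r_i\rVert_{C^0(T')} < \varepsilon$. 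Uniform closeness on $T'$ gives uniform closeness on $K$, and since $K$ is compact and the $\{r_i(x)\}$ span $E(x)$ uniformly (being an orthonormal basis at every $x \in K$), for $\varepsilon$ small enough the perturbed sets $\{s_i(x)\}$ also span $E(x)$ for all $x \in K$ — the determinant of the matrix of components is bounded away from $0$ along $K$ and depends continuously on the sections.

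The main obstacle — really the only substantive point — is ensuring that the Runge approximation result genuinely applies to the chosen $T'$, i.e. that $\overline{M}\setminus T'$ has no bounded/closed-off components and that the relevant unique continuation and duality machinery in Appendix \ref{app:B} covers the operator $\Lapl = d_A^* d_A$ with $\supp$ of the boundary data confined to $\Gamma$. This is precisely why $K$ is pushed onto $\partial T'$ and why $T'$ is taken to be a smoothly embedded, contractible tube: with this configuration $M \setminus T'$ is connected and one can run the Hahn–Banach/duality argument of Corollary \ref{cor:rungecor} with boundary control on $\Gamma$. Everything else — solvability on $T'$, triviality of $E|_{T'}$, the open condition ``$m$ sections span at each point of a compact set'' being stable under $C^0$-small perturbation — is routine, so I would state these briefly and spend the care on the placement of $T'$ and the citation of the approximation corollary.
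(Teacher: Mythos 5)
Your proposal is correct and follows essentially the same route as the paper: shrink the tube so that $K \subset \partial T'$, solve the Dirichlet problems $\Lapl r_i = 0$, $r_i|_{\partial T'} = e_i$, approximate by global solutions via Corollary \ref{cor:rungecor}, and use that spanning on the compact set $K$ is stable under $C^0$-small perturbation. One small correction: the reason for placing $K$ on $\partial T'$ is not the connectedness of $\overline{M}\setminus T'$ (a solid tube with $K$ in its interior would have connected complement just as well), but precisely the point you make afterwards — on $\partial T'$ the Dirichlet data force $r_i(x) = e_i$ exactly, which is what guarantees the spanning before perturbation.
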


In order to apply the previous lemma to a curve at the boundary, we need to slightly extend our domain, with keeping the assumptions about the DN maps.

\begin{lemma}\label{lemma:extension}
    Assume $E$ is also equipped with a unitary connection $B$ such that $\Lambda_A(f)|_{\Gamma} = \Lambda_B(f)|_{\Gamma}$ for all $f \in C_0^\infty(\Gamma; E)$. Then in a suitable gauge and for a point $p \in \Gamma$, there is a (half-ball) neighbourhood $p \in U$ with an extension $\widetilde{U}$ (with $p \in \widetilde{U}^\circ$), which give further extensions: $M \subset \widetilde{M} := M \cup \widetilde{U}$, $\widetilde{E} := E \cup \widetilde{U}\times \mathbb{C}^m$, $\widetilde{\Gamma} :=\partial \widetilde{U} \cup \Gamma \setminus (\Gamma \cap \partial U) \subset \partial \widetilde{M}$, $\widetilde{A}|_{M} = A$, $\widetilde{B}|_M = B$, $\widetilde{g}|_M = g$. Moreover, we ask that $A = B$ in $\widetilde{M} \setminus M$. 
    
    Then we also have that $\Lambda_{\widetilde{A}}(f)|_{\widetilde{\Gamma}} = \Lambda_{\widetilde{B}}(f)|_{\widetilde{\Gamma}}$ for all $f \in C_0^\infty(\widetilde{\Gamma}; \widetilde{E})$.
\end{lemma}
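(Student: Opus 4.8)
The plan is to reduce the statement to a unique-continuation argument across the "new" region $\widetilde{M} \setminus M$, where by construction $\widetilde{A} = \widetilde{B}$. First I would fix a gauge near $p$ using Lemma \ref{lemma} so that the normal components of $A$ and $B$ vanish near $\Gamma$; this is harmless since the DN map is gauge-invariant and the statement is about the extended DN maps. The construction of $\widetilde{U}$, $\widetilde{M}$, $\widetilde{E}$, $\widetilde{\Gamma}$ is routine collaring (a half-ball chart at $p$, glued smoothly), and since the extended connections agree on the overlap region and we \emph{impose} $\widetilde{A}=\widetilde{B}$ there, the bundle $\widetilde E$ and the connection $\widetilde A$ are genuinely well-defined smooth objects on $\widetilde M$.

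The heart of the argument is the following. Take $f \in C_0^\infty(\widetilde{\Gamma}; \widetilde{E})$ and let $\widetilde u_A$, $\widetilde u_B$ solve $d_{\widetilde A}^* d_{\widetilde A} \widetilde u_A = 0$, $d_{\widetilde B}^* d_{\widetilde B}\widetilde u_B = 0$ on $\widetilde M$ with boundary data $f$. Set $w = \widetilde u_A - \widetilde u_B$ on $\widetilde U \setminus M$ (the part of the new region not in the original manifold). Since $\widetilde A = \widetilde B$ on $\widetilde U \setminus M$, the difference $w$ satisfies the single elliptic equation $d_{\widetilde A}^* d_{\widetilde A} w = 0$ there. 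Now I claim $w$ has vanishing Cauchy data on the piece $\partial(\widetilde U \setminus M) \cap \partial M$, i.e. on $\Gamma \cap \partial U$ (viewed inside $M$): indeed the boundary data of $\widetilde u_A$ and $\widetilde u_B$ agree on $\widetilde\Gamma$ and in particular on $\partial U \cap \widetilde\Gamma$, and I need to run the argument so that the full Dirichlet \emph{and} Neumann data of $w$ vanish on the interface with $M$. The cleanest way: restrict $\widetilde u_A$ to $M$ — it solves $d_A^*d_A u = 0$ on $M$ with some boundary data $g_A$ on $\partial M$ that is supported (after the support bookkeeping) in $\Gamma$, so $\Lambda_A(g_A)|_\Gamma = \Lambda_B(g_A)|_\Gamma$ by hypothesis; hence the solution $u_B'$ on $M$ with the same data $g_A$ has the same Neumann data on $\Gamma$. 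Patching $u_B'$ with $\widetilde u_A|_{\widetilde U \setminus M}$ gives, by the matching of Cauchy data across $\Gamma \cap \partial U$ and the agreement $\widetilde A = \widetilde B$ on the overlap, a global solution of $d_{\widetilde B}^* d_{\widetilde B}(\cdot) = 0$ on $\widetilde M$ with boundary data $f$ on $\widetilde\Gamma$; by uniqueness of the Dirichlet problem this global solution is $\widetilde u_B$, and therefore the Neumann data of $\widetilde u_A$ and $\widetilde u_B$ agree on $\widetilde\Gamma$. This is exactly $\Lambda_{\widetilde A}(f)|_{\widetilde\Gamma} = \Lambda_{\widetilde B}(f)|_{\widetilde\Gamma}$.

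The step I expect to be the main obstacle is the careful bookkeeping of supports and the transmission (patching) argument: one must check that $\widetilde u_A|_{\widetilde U \setminus M}$ and the auxiliary solution $u_B'$ on $M$ have matching Dirichlet \emph{and} Neumann traces along the interface $\Gamma \cap \partial U$ so that the glued function is genuinely a (weak, hence by elliptic regularity smooth) solution on all of $\widetilde M$ — this uses that on the overlap the two operators literally coincide, and that the interface is a piece of $\Gamma$ where the DN maps are assumed equal. One also needs that the boundary data involved stay compactly supported inside $\Gamma$ (resp. $\widetilde\Gamma$), which is why the extension $\widetilde U$ is taken to be a genuine neighbourhood of $p$ in $\widetilde M$ and why Lemma \ref{lemma} is invoked first. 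Everything else — well-posedness of the Dirichlet problems in $H^1$, elliptic regularity up to the (smooth) interface, and the trivial-bundle structure over $\widetilde U$ — is standard and I would only cite it.
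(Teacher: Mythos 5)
Your proposal is correct and follows essentially the same route as the paper: restrict the $\widetilde A$-solution to $M$, use the hypothesis on the partial Cauchy data over $\Gamma$ to produce a companion $B$-solution with matching Dirichlet and Neumann traces there, glue it to the $\widetilde A$-solution across the interface $\Gamma\cap\partial U$ (where $\widetilde A=\widetilde B$), and identify the glued function with the $\widetilde B$-solution by uniqueness. The paper implements the patching by writing the companion solution as $u+\varphi$ with $\varphi\in H^1_0(M;E)$ having vanishing Cauchy data on $\Gamma$ and extending $\varphi$ by zero, which is exactly the transmission argument you flag as the main obstacle.
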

\begin{proof}
    Take $p \in \Gamma$ with a small half-ball trivialising chart $U$ and extend it slightly to $\widetilde{U}$ to form $\widetilde{M}:=M\cup\widetilde{U}$ with some arbitrary smooth extension of the metric. Extend the bundle trivially, i.e. by gluing in $\widetilde{U} \times \mathbb{C}^m$. By Theorem \ref{boundarydet} and Remark \ref{boundarydettoponontriv}, we know in some gauge we have $A$ and $B$ having the same jets on $\Gamma$. Thus there exist extensions $\widetilde{A}$ and $\widetilde{B}$ that agree in $\widetilde{M}\setminus M$.
    
    For the second claim, note that $\Lapl_{\widetilde{A}} = d_{\widetilde{A}}^* d_{\widetilde{A}}$ does not have zero as a Dirichlet eigenvalue and so the DN maps are always well-defined. Let $\widetilde{u} \in H^1(\widetilde{M}; \widetilde{E})$ be a unique solution to $\Lapl_{\widetilde{A}} \widetilde{u} = 0$ with $\widetilde{u}|_{\partial \widetilde{M}} \in \widetilde{H}^{\frac{1}{2}}(\widetilde{\Gamma}; \widetilde{E})$.
    
    Then $u := \widetilde{u}|_{M}$ satisfies $\Lapl_A u = 0$ and by the assumption on the DN maps, there is $v_0 \in H^1(M; E)$ such that $\Lapl_B v_0 = 0$, $u|_{\partial M} = v_0|_{\partial M}$ and $\mathcal{N}_A u|_{\Gamma} = \mathcal{N}_B v_0|_{\Gamma}$.\footnote{Here we denote the covariant normal derivative by $\mathcal{N}_A u := d_A(u)(\nu)|_{\partial M}$.} Therefore, there exists $\varphi \in H_0^1(M; E)$ such that
    \begin{equation}
        v_0 = u + \varphi
    \end{equation}
    and $\mathcal{N}_A \varphi|_{\Gamma} = 0$. Thus $\varphi$ admits an extension to $H^1(\widetilde{M}; E)$ by zero, denoted by the same letter. We introduce
    \begin{equation}
        \widetilde{v} := \widetilde{u} + \varphi
    \end{equation}
    so that $\widetilde{v} \in H^1(\widetilde{M}; \widetilde{E})$, $\widetilde{v} = v_0$ in $M$ and $\widetilde{v} = \widetilde{u}$ in $\widetilde{M} \setminus M$. Now for $w \in H^1(\widetilde{M}; \widetilde{E})$, we have (the subscript denotes the domain where we take the $L^2$ inner product)
    \begin{align}
        (\Lapl_{\widetilde{B}}\widetilde{v}, w)_{\widetilde{M}} &= (\Lapl_B v_0, w)_M + (\Lapl_{\widetilde{B}} \widetilde{u}, w)_{\widetilde{M} \setminus M}\\
        &= (\Lapl_{\widetilde{A}} \widetilde{w}, w)_{\widetilde{M} \setminus M} = 0
    \end{align}
    Here we used that $\Lapl_B v_0 = 0$ and $\widetilde{A} = \widetilde{B}$ in $\widetilde{M} \setminus M$. Therefore, $\Lapl_{\widetilde{B}} \widetilde{v} = 0$.
    
    We also have 
    \begin{align}
        \widetilde{v}|_{\partial \widetilde{M}} &= (\widetilde{u} + \varphi)_{\partial \widetilde{M}} = \widetilde{u}|_{\partial \widetilde{M}}\\
        \mathcal{N}_{\widetilde{A}} \widetilde{u}|_{\widetilde{\Gamma}} &= \mathcal{N}_{\widetilde{B}} \widetilde{v}|_{\widetilde{\Gamma}}
    \end{align}
    by the properties of $\varphi$ and $\widetilde{\Gamma}$. Therefore, the Cauchy data sets of $\Lapl_{\widetilde{A}}$ and $\Lapl_{\widetilde{B}}$ agree on $\widetilde{\Gamma}$ and since these are graphs of the DN maps, we finish the proof.
\end{proof}

With these two Lemmas behind our back, we are ready to prove the main theorem of the section.

\begin{proof}[Proof of Theorem \ref{thm:mainthm'}]
    Assume first $n \geq 3$. Let $p \in \Gamma$. By Lemma \ref{lemma:extension}, we may extend $M$ near $p$ along with the bundles $E$ and $E'$ to $\widetilde{E}$ and $\widetilde{E'}$ respectively, such that if we denote the ball we added to $M$ to form $\widetilde{M}$ by $D$, we have $\widetilde{E}|_{D} = \widetilde{E'}|_D$. Furthermore, we extend connections (after a gauge transform) such that $\widetilde{A} = \widetilde{B}$ on $D$ and the Hermitian structures also agree on $D$. Then $\Lambda_{\widetilde{A}}(f)|_{\widetilde{\Gamma}} = \Lambda_{\widetilde{B}}(f)|_{\widetilde{\Gamma}}$ for all $f \in C_0^\infty(\widetilde{\Gamma}; \widetilde{E})$ by the same Lemma. 
    
    We want to prove that the two connections $A$ and $B$ have the same holonomy at $p$, which would yield they are equivalent; we construct the isomorphism between $E$ and $F$ along the way. We fix an embedded smooth closed curve $\gamma$, centred at $p$. Fix a small $\varepsilon > 0$ such that the $\varepsilon$ neighbourhood of $p$ in $\partial M$, denoted by $U_{\varepsilon}$ is contained in $D \cap \Gamma$. The curve $\gamma$ is approximated in $C^1$ norm by the embedded curves $\gamma_\varepsilon: [0, 2] \to M$, such that $\gamma_{\varepsilon}(0) = \gamma_{\varepsilon}(2) = p$, $\gamma_{\varepsilon}(1) = p_1 \in U_\varepsilon$, $\gamma_{\varepsilon}(0, 1) \subset M^\circ$ and $\gamma_{\varepsilon}[1, 2] \subset U_{\varepsilon}$. Thus it suffices to prove the holonomies of $A$ and $B$ along $\gamma_\varepsilon$ agree. We relabel $\gamma_\varepsilon$ by $\gamma$ and denote $\gamma_1 := \gamma|_{[0, 1]}$, $\gamma_2 := \gamma|_{[1, 2]}$.
    
    For $\gamma_2$ it is easy to see that the holonomies agree, as $A = B$ on $\partial M$ by assumption.
    
    For $\gamma_1$, we need a density argument of Runge-type. By Lemma \ref{lemma:runge}, there are smooth twisted-harmonic with respect to $\widetilde{A}$ sections $s_i \in C^\infty(\overline{\widetilde{M}}; \widetilde{E})$ for $i = 1, \dotso, m$, i.e. $\Lapl_{\widetilde{A}} s_i = 0$ and we also have $\supp{(s_i|_{\widetilde{M}})} \subset \Gamma$. Furthermore, we also ask that
    \begin{equation}\label{eq:spann}
        \spann\{s_1(x), \dotso, s_m(x)\} = E(x)
    \end{equation}
    for $x \in \gamma_1$.
    
    Then we construct twisted-harmonic sections with respect to $\widetilde{B}$ by solving the Dirichlet problem, such that 
    \begin{align}
        \Lapl_{\widetilde{B}} r_i &= 0\\
        r_i|_{\partial \widetilde{M}} &= s_i|_{\partial \widetilde{M}}
    \end{align}
    Therefore, we have that $s_i = r_i$ on $D$, since by the assumption on DN maps and $\widetilde{A} = \widetilde{B}$ on $D$
    \begin{equation}
        \mathcal{N}_{\widetilde{A}} s_i = \mathcal{N}_{\widetilde{A}} r_i
    \end{equation}
    So by unique continuation, we have the claim.
    
    Consider $\delta > 0$ small enough such that the $\delta$-tubular neighbourhood $T$ around $\gamma_1$ intersects $\partial M$ inside $U_\varepsilon$. Then $E|_T = E'|_T = T \times \mathbb{C}^m$ since $T$ contractible. We introduce the $m \times m$ matrices of sections over $T$:
    \begin{align}
        F = \big(s_1(x), s_2(x), \dotso, s_m(x)\big) \quad \text{ and } \quad G = \big(r_1(x), r_2(x), \dotso, r_m(x)\big)
    \end{align}
    Then we have $\Lapl_{\widetilde{A}} F = 0$ and $\Lapl_{\widetilde{B}} G = 0$ by extending the action diagonally. Take $\delta$ small enough such that over $T$ we have $F$ non-singular, by \eqref{eq:spann}.
    
    As in the proofs of Theorems \ref{linebundle} and \ref{analytic_m>1}, and also Lemma \ref{gauge_constr}, we introduce the auxiliary connection $A'$ over $T$:
    \begin{equation}
        A' := F^*A
    \end{equation}
    In the vicinity of $p$, we may also analogously introduce $B' = G^*B$ as $G$ is non-singular. Then by the unique continuation principle, since $A'$ and $B'$ satisfy elliptic equations by Lemma \ref{gauge_constr}, we have $B' = A'$ in a neighbourhood $V$ of $p$ inside $T$. In other words, after introducing $H = FG^{-1}$, we have
    \begin{equation}
        A' = B' \iff H^*A = B
    \end{equation}
    in $V$. But since $A$ and $B$ are Hermitian, this implies that $H$ is unitary on $V$. 
    
    Assume there is a point $q \in T \cap \partial V$ such that there is a sequence of points $q_k \in V$ with $q_k \to q$ and $\det G(q_k) \to 0 = \det G(q)$ as $n \to \infty$. Then as $H$ is unitary, we have as $n \to \infty$
    \begin{equation}\label{eq:unitary}
        |\det G (q_k)| = |\det F (q_k)| \to 0
    \end{equation}
    But this contradicts the fact that $\det F \neq 0$ on the compact set $T$. Thus $\det G \neq 0$ on $T$ and so the connection $B' = G^*B$ is defined over the set $T$, so by unique continuation $H^*A = B$ in $T$. Therefore, since $F = G$ on $U_\varepsilon$, $H = Id$ on the same set and $A$ and $B$ have the same holonomy.
    
    Note that the same argument gives that the parallel transport matrices for $A$ and $B$ are equal over any embedded curve starting at a point $p \in \Gamma$ and endpoint $p_1 \in \Gamma$ (we extend the manifold and the bundles by gluing two balls in this case). We may apply this if $\Gamma$ has several connected components.
    
    Now the usual argument, given in the proof of Theorem 7.3. \cite{cek1} gives us that the parallel transport $H$ of $Id$ from $p$ in the bundle of homomorphisms $\text{Hom}(E', E)$ with the auxiliary connection $\nabla^{\text{Hom}}$, given by $\nabla^{\text{Hom}} u := \nabla_A u - u\nabla_B$ for $u \in C^\infty\big(M; \text{Hom}(E', E)\big)$, is independent of the path and gives us the desired automorphism. It is clear that $H$ is non-singular and unitary from the first order PDE it satisfies.
    
    For $n = 2$, the loop $\gamma$ at $p$ could have self-intersections, due to codimension reasons. To treat this case, we first note that the discussion above carries over to the case of loops without self-intersections, i.e. we have $\mathcal{P}^A = \mathcal{P}^B$ along embedded loops (here $\mathcal{P}$ denotes parallel transport). Then Lemma \ref{lemma:geomlemma} applies to identify holonomies of $A$ and $B$ and we conclude the proof in the same way as for $n \geq 3$.
    %\begin{equation}
    %    \Lapl_{\widetilde{A}} (s_i - r_i) = 0 \quad \text{ in } \quad D\\
    %    s_i - r_i = 0, \quad \mathcal{N}_A(s_i - )
    %\end{equation}
    
\end{proof}

We now prove a simple geometric lemma, that we used in the proof of the previous theorem in the case of surfaces.

\begin{lemma}\label{lemma:geomlemma}
    Let $\Sigma$ be a smooth compact surface with boundary and $E$ a vector bundle over $\Sigma$ equipped with two connections $A$ and $B$. Let $p \in \Sigma$ and assume $\mathcal{P}^A = \mathcal{P}^B$ for all embedded simple closed curved at $p$ (here by $\mathcal{P}$ we denote parallel transport). Then $\mathcal{P}^A = \mathcal{P}^B$ along any loop, i.e. $A$ and $B$ have the same holonomy at $p$.
\end{lemma}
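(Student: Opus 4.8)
The plan is to argue with the parallel transport operators themselves. For a piecewise-$C^1$ loop $\gamma$ at $p$, write $\mathcal{P}^A_\gamma,\mathcal{P}^B_\gamma\in GL(E_p)$ for the parallel transports, and let $\mathcal{S}$ be the set of loops at $p$ with $\mathcal{P}^A_\gamma=\mathcal{P}^B_\gamma$. I would record three elementary properties: (1) $\mathcal{S}$ contains every smooth embedded loop through $p$ (this is the hypothesis); (2) $\mathcal{S}$ is closed under reversal and under concatenation of its own elements, since $\mathcal{P}_{\sigma*\tau}=\mathcal{P}_\tau\mathcal{P}_\sigma$ and $\mathcal{P}_{\bar\sigma}=\mathcal{P}_\sigma^{-1}$; (3) $\mathcal{S}$ is closed under $C^1$-limits, and more generally under any limit of loops along which the parallel transports converge, because $\mathcal{P}^A,\mathcal{P}^B$ depend continuously on the loop. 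Since every loop at $p$ is $C^1$-approximated by a \emph{generic} smooth immersed loop at $p$ — one whose self-intersections are finitely many transverse double points, none of them equal to $p$ (here the hypothesis $\dim\Sigma=2$ enters, making the double set $0$-dimensional) — by (3) it is enough to prove $\gamma\in\mathcal{S}$ for such generic $\gamma$, which I would do by induction on the number $d$ of double points.

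For $d=0$, $\gamma$ is a smooth embedded circle through $p$, so $\gamma\in\mathcal{S}$ by (1). For $d\geq1$, pick a double point $q\neq p$ and write $\gamma$ as the reparametrised concatenation $\gamma = A_0 * P_1 * A_1$, where $A_0$ runs from $p$ to $q$, $P_1$ is the intermediate loop at $q$, and $A_1$ runs from $q$ back to $p$; put $u_C=\mathcal{P}^C_{A_0}$, $v_C=\mathcal{P}^C_{P_1}$, $w_C=\mathcal{P}^C_{A_1}$ for $C\in\{A,B\}$, so that $\mathcal{P}^C_\gamma=w_Cv_Cu_C$. Now perform the two smoothings of the transverse crossing at $q$. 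The ``connected'' smoothing produces a single generic immersed loop $\gamma''$ at $p$ with $d-1$ double points; letting the smoothing parameter tend to $0$ identifies $\mathcal{P}^C_{\gamma''}=w_Cv_C^{-1}u_C$. The ``separating'' smoothing produces two disjoint immersed loops, one of them based at $p$ with strictly fewer than $d$ double points (it keeps only the crossings not meeting the interior of $P_1$, and loses the crossing at $q$) and with $\mathcal{P}^C=w_Cu_C$ in the limit, the other based at $q$ and discarded. By the inductive hypothesis $\gamma''$ and the $p$-based component both lie in $\mathcal{S}$, i.e. $w_Av_A^{-1}u_A=w_Bv_B^{-1}u_B$ and $w_Au_A=w_Bu_B$. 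Substituting $w_A=w_Bu_Bu_A^{-1}$ from the second relation into the first gives $u_A^{-1}v_Au_A=u_B^{-1}v_Bu_B$, and substituting it into the target $w_Av_Au_A=w_Bv_Bu_B$ shows the target is equivalent to the very same relation; hence $\mathcal{P}^A_\gamma=\mathcal{P}^B_\gamma$, i.e. $\gamma\in\mathcal{S}$, which closes the induction. Combined with (3) and the $C^1$-density of generic loops, $\mathcal{S}$ is all of the loop space, which is precisely the assertion that $\mathcal{P}^A=\mathcal{P}^B$ along every loop.

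The one delicate point is the interplay between the curves fed into the induction and the parallel transport identities fed into the algebra: the inductive hypothesis is about \emph{smooth immersed} loops with fewer double points (the smoothed loops $\gamma''$ and the $p$-component of the separating smoothing), while the clean identities $\mathcal{P}^C=w_Cu_C$ and $\mathcal{P}^C=w_Cv_C^{-1}u_C$ refer to the cornered concatenations $A_0*A_1$ and $A_0*\bar P_1*A_1$; these are reconciled by noting that as the smoothing region shrinks to $q$ the smoothed loops' parallel transports converge to those of the cornered loops (a short arc of vanishing length contributes $\mathrm{Id}$ in the limit), and that the inductive hypothesis already applies to \emph{every} member of the smoothed family. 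The remaining bookkeeping — that the crossing at $q$ is genuinely removed and the other $d-1$ crossings are merely redistributed — is routine, and I would treat it briefly. Note that for ambient dimension $\geq3$ the statement is trivial because every loop is $C^1$-close to an embedded one, so it is exactly the isolated transverse double points and the smoothing operation on the surface that do the work here.
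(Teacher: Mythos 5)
Your proof is correct and follows essentially the same route as the paper's: induction on the number of transverse double points, resolving the first crossing via the two auxiliary loops $\gamma_1*\gamma_3$ and $\gamma_1*\bar\gamma_2*\gamma_3$, and the identical substitution algebra. The only difference is that you make explicit two points the paper leaves implicit --- the $C^1$-approximation by generic immersed loops with double points away from $p$, and the smoothing of the corners at $q$ so that the inductive hypothesis genuinely applies to smooth loops --- which is a refinement of the same argument rather than a different one.
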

\begin{proof}
    Let $\gamma: [0, 1] \to M$ be a loop at $p$. By differential topology \cite{hirsch} we may assume $\gamma$ is embedded except at most finitely many points, where it self-intersects transversely. Let the number of self-intersections be $k$. We inductively prove that $\mathcal{P}^A = \mathcal{P}^B$, by induction on $k$. The case $k = 0$ holds true by assumption.
    
    We now prove the inductive step and assume the claim holds for up to $k$ self-intersections. Let $\gamma(t_0) = \gamma(t_1) = q$ with $t_1 > t_0 > 0$ minimal, i.e. the first intersection. Denote $\gamma_1 := \gamma|_{[0, t_0]}, \gamma_2:= \gamma|_{[t_0, t_1]}$ and $\gamma_3 := \gamma|_{[t_1, 1]}$ and corresponding parallel transports by $\mathcal{P}_1, \mathcal{P}_2$ and $\mathcal{P}_3$. By the inductive hypothesis, we have:
    \begin{align}
        \mathcal{P}_1^A \cdot \mathcal{P}_3^A &= \mathcal{P}_1^B \cdot \mathcal{P}_3^B\\
        \mathcal{P}_1^A \cdot (\mathcal{P}_2^A)^{-1} \cdot \mathcal{P}_3^A &= \mathcal{P}_1^B \cdot (\mathcal{P}_2^B)^{-1} \cdot \mathcal{P}_3^B
    \end{align}
    But by substituting the first equation above into the second (on both sides) and inverting, we get:
    \begin{equation}
        \mathcal{P}^A = \mathcal{P}_1^A\cdot \mathcal{P}_2^A \cdot \mathcal{P}_3^A = \mathcal{P}_1^B\cdot \mathcal{P}_2^B \cdot \mathcal{P}_3^B = \mathcal{P}^B
    \end{equation}
    This concludes the proof of induction and of the lemma.
\end{proof}

\subsection{The non-unitary case.}
Finally, we prove a version of the previous theorem in the more general case when the connections are non-unitary. Note that crucially, when either the gauge $F$ from Lemma \ref{lemma} constructed by solving $d_A^*d_A F = 0$ is unitary \emph{or} if the connection is unitary, then $A' = F^*A$ satisfies an elliptic system with coefficients independent of the gauge $F$. This enables us to apply the UCP in the main theorem in the either the case of unitary connections or when we can construct unitary gauges. Also, we can handle connections on line bundles, since the Yang-Mills equation there is locally simply $d^*dA = 0$ and is independent of the Hermitian structure.

In this section, we prove the analogue of Proposition \ref{prop:linebundlenonunitary}.

\begin{prop}\label{prop:rungenonunitary}
    Conclusions of Theorem \ref{thm:mainthm'} hold in the case of general $GL(1, \mathbb{C})$ Yang-Mills connections (i.e. in the $m = 1$ case).
\end{prop}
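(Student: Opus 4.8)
The plan is to run the proof of Theorem~\ref{thm:mainthm'} essentially verbatim, replacing the single place where unitarity enters --- the identity $|\det F| = |\det G|$ that forces $\det G$ to be non-vanishing on the tube $T$ --- by the geometric-analysis device used in Proposition~\ref{prop:linebundlenonunitary}. As in Theorem~\ref{thm:mainthm'} it suffices to treat $n \geq 3$; the surface case then follows by first matching parallel transports along embedded loops and invoking Lemma~\ref{lemma:geomlemma}.

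Concretely: fix $p \in \Gamma$ and, by Lemma~\ref{lemma:extension}, extend $M$ near $p$ to $\widetilde M$ by gluing in a small ball $D$ on which $\widetilde A = \widetilde B$, keeping $\Lambda_{\widetilde A}(f)|_{\widetilde\Gamma} = \Lambda_{\widetilde B}(f)|_{\widetilde\Gamma}$. Approximate a chosen embedded loop $\gamma$ at $p$ by loops $\gamma_\varepsilon = \gamma_1 \cup \gamma_2$ with $\gamma_2 \subset \partial M$ near $p$ and $\gamma_1$ entering the interior; since $A = B$ on $\partial M$ the holonomies agree along $\gamma_2$, so one must match them along $\gamma_1$. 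Using Lemma~\ref{lemma:runge} produce a single section $F := s_1$ with $\Lapl_{\widetilde A} F = 0$, $\supp(F|_{\partial\widetilde M}) \subset \widetilde\Gamma$, and $F$ nowhere zero along $\gamma_1$; let $G := r_1$ solve $\Lapl_{\widetilde B} G = 0$ with the same boundary data, so $F = G$ on $D$ by unique continuation. On a thin tube $T$ around $\gamma_1$ (on which $\widetilde E$ is trivial and, after shrinking, $F \neq 0$), set $A' := F^*A$ and, near $p$ where $G = F \neq 0$, $B' := G^*B$. Since $m = 1$, the Yang--Mills equation reads $d^*dA' = 0$ independently of the Hermitian structure, so by Lemma~\ref{gauge_constr} both $A'$ and $B'$ satisfy elliptic equations with scalar principal part, and unique continuation (as in Theorem~\ref{thm:mainthm'}) gives $A' = B'$, i.e. $H^*A = B$ with $H = h := FG^{-1}$, on a neighbourhood $V$ of $p$ in $T$ on which $h$ is nowhere zero (it equals $1$ on $D \cap T \subset V$).

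The substantive step is to propagate the relation $h^*A = B$ over all of $T$. I would first note, via Lemma~\ref{zeroset} applied to $\Lapl_B G = 0$ --- an elliptic system with scalar principal part $\Delta_g$ whose lower-order terms respect the zero section --- that $\mathcal{N}(G)$ lies in a countable union of hypersurfaces $M_1, M_2, \dots$. Then perturb $\gamma_1$ to meet these transversally and drill through them as in the proof of Theorem~\ref{linebundle}: near a crossing point pass, by Lemma~\ref{lemma} and Remark~\ref{newrmk_boundary_extension}, to a gauge in which $A$ and $B$ have vanishing normal component, so that on the side already reached $dh = h(B - A)$ gives $\partial_\nu h = 0$; hence $h$ is locally independent of the normal variable, and since $F$ is bounded away from $0$ on the compact $T$ this both rules out $G$ vanishing across the hypersurface and extends $h$ as a nowhere-zero smooth gauge. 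Equivalently, $dh = h(B-A)$ on the dense open set where $h$ is defined forces $dA = dB$ there, and near any exceptional point one solves $d\phi = B - A$ locally and writes $h = c\,e^{\phi}$, exactly as in Proposition~\ref{prop:linebundlenonunitary}. By connectedness of $T$ one obtains a nowhere-zero $h : T \to \mathbb{C}$ with $h^*A = B$ and $h = Id$ on $U_\varepsilon \subset \partial M$, so the $A$- and $B$-holonomies at $p$ agree. Finally, as in Theorem~\ref{thm:mainthm'} (cf. the proof of Theorem~7.3 in \cite{cek1}), parallel transport of $Id$ in $\text{Hom}(E', E)$ with $\nabla^{\text{Hom}}u = \nabla_A u - u\nabla_B$ is path-independent and supplies the required non-singular $H : E' \to E$ with $H|_\Gamma = Id$ and $H^*A = B$.

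I expect the main obstacle to be exactly this propagation step. In the unitary setting $G$ cannot vanish on $T$ because $|h| \equiv 1$; here $G$ may a priori vanish and $h = F/G$ blow up, so one must import the temporal-gauge / ``$h = c\,e^{\phi}$'' argument of Proposition~\ref{prop:linebundlenonunitary} (equivalently, bootstrap $dA = dB$) to carry the gauge past the zero set of $G$, while checking that the local gauge changes used during drilling are harmless because for $m = 1$ the Yang--Mills equation does not see the Hermitian structure.
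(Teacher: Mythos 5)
Your proposal is correct and shares the paper's skeleton: rerun Theorem \ref{thm:mainthm'} and repair the one step where unitarity entered, namely the non-vanishing of $G$ on the tube $T$, by exploiting that for $m=1$ the relation $H^*A=B$ forces $dA=dB$ and hence $H=c\,e^{\phi}$ for a primitive $\phi$ of $B-A$. Where you genuinely diverge is in how this is organised. You import the drilling machinery of Theorem \ref{linebundle} and Proposition \ref{prop:linebundlenonunitary} (Lemma \ref{zeroset} applied to $\mathcal{N}(G)$, transversal perturbation of $\gamma_1$, the temporal gauge of Remark \ref{newrmk_boundary_extension}, and local exponentials $h=c\,e^{\phi}$ near exceptional points). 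The paper avoids drilling entirely with a first-vanishing-time argument adapted to the product structure $T\cong[0,1]\times B$: it sets $s=\sup\{t\in[0,1] \mid G\neq 0 \text{ on } [0,t)\times B\}$, notes that $U=[0,s)\times B$ is simply connected so that $B-A$ admits a \emph{global} primitive $f$ on $U$, bounded by the mean value theorem since $B-A$ is bounded on the compact $T$; hence $H=e^{f}$ (up to a single constant) is bounded and bounded away from zero, contradicting $G(q)=0$ at a boundary point $q$ of $U$. The paper's version buys two simplifications your route must supply by hand: it fixes the multiplicative constant once and for all, whereas in your local version one must verify that the good set is connected (or connectable through slightly larger balls) near each exceptional point so that the constants in $h=c\,e^{\phi}$ match across components --- this is precisely what the path-approximation property of the drilled set guarantees in Proposition \ref{prop:linebundlenonunitary}, and you would need to rebuild it inside the tube; and it dispenses with Lemma \ref{zeroset} and transversality altogether. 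Both arguments are sound; yours is heavier but is a faithful transplant of machinery the paper already develops.
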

\begin{proof}
    We follow the proof of Theorem \ref{thm:mainthm'} and only point out the main differences; this is similar in spirit to the proof of Proposition \ref{prop:linebundlenonunitary}.
    
    We take a point $p \in \Gamma$, extend the domain near this point, assume we have curves $\gamma_1$ and $\gamma_2$ as before and we form the tube $T$ around $\gamma_1$, such that the functions $F$ and $G$ over the trivialisation $E|_{T} = T \times \mathbb{C} = E'|_{T}$ satisfy $\Lapl_A F = 0 = \Lapl_B G$ and $F|_T$ is non-singular. Then by the UCP we have $F^*A = A' = B' = G^*B$ inside an open, connected set, containing a neighbourhood of $\{0\} \times B$, where we identify $T = [0, 1] \times B$ with $B$ the $(n-1)$-dimensional unit ball. Here we crucially use that $m = 1$, so that $A'$ and $B'$ satisfy the same equation.
    
    We want to prove that $G|_T$ is non-singular too, but we cannot use \eqref{eq:unitary}. Note that on $V$
    \begin{equation}\label{eq:auxm=1nonunitary}
        %H^*A = H^{-1}dH + H^{-1} A H = B
        H^*A = H^{-1}dH + A = B
    \end{equation}
    This implies the following two easy facts, again on $V$:
    \begin{equation}\label{eq:aux}
        %\Tr(dA - dB) = 0 \quad \text{ and } \quad d(\det H) = \det H \cdot \Tr(B - A)
        dA - dB = 0 \quad \text{ and } \quad dH = H \cdot (B - A)
    \end{equation}
    The first fact follows from taking traces of $H^*F_A = F_B$ ($F_A$ and $F_B$ denote the curvature) and the second from \eqref{eq:auxm=1nonunitary}. Consider
    \begin{equation}
        s := \sup \{t \in [0, 1] \mid G \neq 0 \text{ on } [0, t) \times B\}
    \end{equation}
    Clearly $s > 0$ and we want to prove $s = 1$. Take $U := [0, s) \times B \subset T$ and assume there is a point $q \in \partial U$ with $G (q) = 0$, or equivalently $H(q) = \infty$.
    
    But as $U$ is simply-connected and \eqref{eq:aux} holds, we find a smooth $f$ on $U$ with
    \begin{equation}\label{eq:localansatz}
        df = B - A
    \end{equation}
    Then $H = e^f$ up to constant, so by assumption $f(q) = \infty$. But by \eqref{eq:localansatz} and the Mean value theorem, we get $f$ uniformly bounded on $U$, contradiction.
    
    %Alternatively, just consider a curve $\gamma'$ with interior lying in $V$ with starting at $p$ and endpoint at $q$, with $\det H(q) = \infty$. By second part of \eqref{eq:aux}, we see that we cannot blow-up in finite time, contradiction.
    
    Thus $A' \equiv B'$ on $T$ and the holonomies of $A$ and $B$ along $\gamma$ are equal, which concludes the proof.
\end{proof}

\begin{rem}\rm
    If we proved existence of a unitary gauge $F$ near closed curves in the interior, we would be able to prove the main theorem for non-unitary connections. Note that these gauge exist locally, similar to the fact that Coulomb gauges exist with value in any compact Lie group (see \cite{uhlenbeck}). It might be sufficient to approximate a unitary gauge constructed locally using Runge approximation theorem, but we did not pursue this approach here.
\end{rem}

\appendix

\section{The space of smooth curves and an extension lemma}\label{app:A}
We need the metric space of smooth curves in the proof of our main theorem -- here are some properties:

\begin{rem}\rm \label{pathmetric}
We are using the standard metric on the space $C^{\infty}([0, 1]; \mathbb{R})$ induced by the seminorms $\lVert{f}\rVert_k = \sup_{t \in [0, 1]}{\big|\frac{d^k f}{d t^k}\big|}$. Then a choice of the metric on this space is:
\begin{equation*}
d(f, g) = \sum_{k = 0}^{\infty} 2^{-k} \frac{\lVert{f - g}\rVert_k}{1 + \lVert{f - g}\rVert_k}
\end{equation*}
and it is a standard fact that this space is a Fr\'{e}chet space with the same topology as the weak topology given by the seminorms. Furthermore, this also induces a Fr\'{e}chet metric to the space $C^{\infty}([0, 1]; \mathbb{R}^m) = \oplus_{i = 1}^m C^{\infty}([0, 1]; \mathbb{R})$ for all $m \in \mathbb{N}$. Moreover, we may consider the space $C^{\infty}([0, 1]; M)$ for any compact Riemannian manifold $(M, g)$ by isometrically embedding $M$ into a Euclidean space $\mathbb{R}^N$ for some $N$, as a closed subspace of $C^{\infty}([0, 1], \mathbb{R}^N)$.
\end{rem}

Now we prove the following lemma for the continuity of $h$ in the interior and on the boundary of the manifold. %There might be a slicker way to smoothly extend the function, but I have not found it in any literature yet.

\begin{lemma}\label{extension_lemma}
Let $\Omega \subset \mathbb{R}^n$ be a domain and $E \subset \Omega$ a closed subset. Assume also that for any two points $x, y \in \Omega \setminus E$ and any smooth path $\gamma$ in $\Omega$ between $x$ and $y$, there exist smooth paths $\gamma_i$ from $x$ to $y$, lying in $\Omega \setminus E$, for $i = 1, 2, \dotso$, that converge to $\gamma$ in the metric space $C^{\infty}([0, 1]; \mathbb{R}^n)$. Let $f: \Omega \setminus E \to \mathbb{C}$ be a smooth function, such that $\partial^\alpha f$ extend continuously to $\Omega$ for all multi-indices $\alpha$. Then there exists a unique smooth extension $\tilde{f}:\Omega \to \mathbb{C}$ with $\tilde{f}|_{\Omega \setminus E} = f$.
\end{lemma}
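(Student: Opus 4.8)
### Proof proposal for Lemma \ref{extension_lemma}

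\textbf{Overall strategy.} The statement is essentially a local glueing result: continuity of all derivatives on the closed set $\Omega\setminus E$ forces a $C^\infty$ extension, provided one can verify the extension is well-defined, i.e. that the continuous extensions of the $\partial^\alpha f$ are compatible (the extension of $\partial^\alpha f$ is actually $\partial^\alpha$ of the extension of $f$). The path-density hypothesis is exactly what is needed to propagate this compatibility across $E$. The plan is as follows.

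\textbf{Step 1: Define the candidate extension and its derivatives.} For each multi-index $\alpha$ let $g_\alpha : \Omega \to \mathbb{C}$ be the (unique) continuous extension of $\partial^\alpha f$ from $\Omega\setminus E$; set $\tilde f := g_0$. Uniqueness of $\tilde f$ is immediate since $\Omega\setminus E$ is dense in $\Omega$ (as every point of $E$ is a limit of a path lying in $\Omega\setminus E$, by the hypothesis applied to any path through that point — or more simply because if some ball around a point of $E$ missed $\Omega\setminus E$, that ball could not be joined to the rest by paths in $\Omega\setminus E$). So the content is existence and smoothness.

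\textbf{Step 2: Reduce smoothness to a first-derivative statement along paths.} I would prove by induction on $|\alpha|$ that $\tilde f \in C^{|\alpha|}$ with $\partial^\alpha \tilde f = g_\alpha$. It suffices to show: for each $i$ and each $\alpha$, the function $g_\alpha$ is $C^1$ on $\Omega$ with $\partial_i g_\alpha = g_{\alpha + e_i}$. Fix a point $p\in E$ (the case $p\notin E$ being classical) and a small ball $B = B(p,r)\subset\Omega$. For $x\in B\setminus E$, the fundamental theorem of calculus gives, for any two such points $x,y$ joined by a segment inside $B\setminus E$,
\begin{equation*}
\partial^\alpha f(y) - \partial^\alpha f(x) = \int_0^1 \nabla(\partial^\alpha f)(\gamma(t))\cdot \dot\gamma(t)\, dt,
\end{equation*}
and the integrand's components are the $g_{\alpha+e_i}$'s. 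The path-density hypothesis lets us replace, in the limit, a straight segment between two arbitrary points $x,y\in B$ (possibly in $E$) by paths $\gamma_i\subset\Omega\setminus E$ converging to it in $C^\infty([0,1];\mathbb{R}^n)$; since $\partial^\alpha f\circ\gamma_i \to g_\alpha\circ\gamma$ and $\nabla(\partial^\alpha f)\circ\gamma_i \to (g_{\alpha+e_1},\dots,g_{\alpha+e_n})\circ\gamma$ uniformly on $[0,1]$ (uniform convergence on the compact image, using continuity of the $g$'s and $C^1$ convergence of the paths), and $\dot\gamma_i\to\dot\gamma$ uniformly, we may pass to the limit and obtain
\begin{equation*}
g_\alpha(y) - g_\alpha(x) = \int_0^1 \big(g_{\alpha+e_1},\dots,g_{\alpha+e_n}\big)(\gamma(t))\cdot \dot\gamma(t)\, dt
\end{equation*}
for \emph{all} $x,y\in B$. (One must first arrange $B$ small enough that straight segments stay in $\Omega$; the hypothesis is stated for paths in $\Omega$, so restrict attention to segments inside $B\subset\Omega$.) Taking $\gamma$ the segment from $x$ to $x+he_i$ and differentiating in $h$ at $h=0$ yields $\partial_i g_\alpha(x) = g_{\alpha+e_i}(x)$, with the right-hand side continuous; hence $g_\alpha\in C^1(B)$. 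Running the induction on $|\alpha|$ then gives $\tilde f = g_0\in C^\infty(\Omega)$ with $\partial^\alpha\tilde f = g_\alpha$, and in particular $\tilde f|_{\Omega\setminus E} = f$.

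\textbf{Anticipated main obstacle.} The only genuinely delicate point is the limiting argument in Step 2: ensuring that the path-density hypothesis can be invoked with curves whose images remain inside the fixed small ball $B$ (so that the integral identity is localized and the uniform-convergence estimates apply on a fixed compact set), and that the convergence $\gamma_i\to\gamma$ in the Fréchet space $C^\infty([0,1];\mathbb{R}^n)$ is strong enough to give uniform convergence of both $\partial^\alpha f\circ\gamma_i$ and $\nabla(\partial^\alpha f)\circ\gamma_i$ — the latter requires that the images $\gamma_i([0,1])$ eventually lie in a compact subset of $\Omega\setminus E$ on which $\partial^\alpha f$ and its gradient are uniformly continuous, which follows because $\gamma([0,1])$ is compact in $\Omega$ and $C^0$-convergence of paths keeps the images in a fixed compact neighborhood, while $g_\alpha, g_{\alpha+e_i}$ are continuous (hence uniformly continuous) on that neighborhood even across $E$. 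Handling this carefully — possibly by first shrinking to segments and using convexity of $B$ to take the $\gamma_i$ supported near the segment — is the crux; everything else is the fundamental theorem of calculus plus induction.
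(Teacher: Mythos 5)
Your proposal is correct and follows essentially the same route as the paper's proof: extend each $\partial^\alpha f$ continuously, use the path-density hypothesis together with the fundamental theorem of calculus to pass the line-integral identity to the limit, and then differentiate along coordinate segments to identify $\partial_i g_\alpha$ with $g_{\alpha+e_i}$, concluding by induction on $|\alpha|$. The only detail you gloss over is that the hypothesis requires endpoints in $\Omega\setminus E$, so for $x,y\in E$ one must first approximate the endpoints by nearby points of $\Omega\setminus E$ and pass to a second limit using continuity of the $g_\alpha$ (exactly as the paper does); this is routine given the density you already established.
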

\begin{proof}
This is a local claim, so we will consider an extension near a point $x \in E$. We will prove that the continuous extension $\tilde{f}$ of $f$ to $\Omega$ is differentiable with the derivative given by the continuous extension $h$ of $df$ to $\Omega$. By inductively repeating the argument for all $\partial^\alpha f$ for multi-indices $\alpha$, it clearly suffices to prove this.

Consider the point $y = x + \delta e_1$, where $\delta >0$ is small enough so that the straight line path $\gamma$ between $x$ and $y$ lies in $\Omega$. Since $\Omega \setminus E$ is dense in $\Omega$, we may choose points $x', y' \in \Omega \setminus E$ that are close to $x, y$, respectively. Consider the path $\gamma'$ obtained by smoothing out the straight line path from $x'$ to $x$, $\gamma$ and the straight line path from $y$ to $y'$. By the hypothesis, there exists a sequence of paths $\gamma_n$ with endpoints at $x'$ and $y'$, lying entirely in $\Omega \setminus E$ that converge to $\gamma'$ in the path metric.

We will consider the integrals along the curves $\gamma_n$: after possibly reparametrising, we may assume that $\gamma_n$ are parametrised by arc-length -- we can always do this for $n$ sufficiently large, as $\gamma$ has a nowhere zero derivative. Therefore, we may integrate $h(\dot{\gamma}_n)$ to get that, by the Fundamental Theorem of Calculus
\begin{equation*}
f(y') - f(x') =  \int_{\gamma_n}d(f \circ \gamma_n(t))= \int_{\gamma_n}h(\dot{\gamma}_n)
\end{equation*}
%= \int_{-\epsilon_n}^{\delta + \eta_n} \frac{d f(\gamma_n(t))}{dt} dt where $[-\epsilon_n, \delta + \eta_n]$ is a domain of parametrisation for $\gamma_n$, where $\epsilon_n$ converges to $\lVert{x' - x}\rVert$ and $\eta_n$ converges to $\lVert{y' - y}\rVert$ as $n \to \infty$.
Here, we think of $h$ as given by the vector of partial derivatives of $f$. By uniform convergence of the curves, we immediately get that
\begin{equation*}
f(y') - f(x') = \int_{\gamma_n}h(\dot{\gamma}_n) \to \int_{\gamma'}h(\dot{\gamma'})
\end{equation*}
and therefore, if we take $x' \to x$ and $y' \to y$ (we can do this as $\Omega \setminus E$ is dense in $\Omega$), we get:
\begin{equation*}
\frac{\tilde{f}(x + \delta e_1) - \tilde{f}(x)}{\delta} = \frac{1}{\delta}\int_0^\delta h_{x + te_1}(e_1)dt \to h_x(e_1)
\end{equation*}
as $\delta \to  0$. Therefore, the partial derivative in the $e_1$ direction exists and similarly, all other partials exist and are equal to the components of $h$. This finishes the proof.
\end{proof}

\begin{rem}\rm\label{rmk_boundary_extension}
If we are given a smooth function $f$ in the interior of a domain $\Omega \subset \mathbb{R}^n$ with smooth boundary, such that all derivatives $\partial^\alpha f$ extend continuously to the boundary, it is well known that there exists a smooth extension $\tilde{f}$ to $\mathbb{R}^n$, such that $\tilde{f}|_{\Omega} = f$. This remark, together with the above lemma, are used in the proof of the smooth extension of $h$ over the singular set in Theorem \ref{linebundle}.
\end{rem}

%This was, slightly surprisingly, proved only in 1964 by Seeley.

Finally, we would like to recall the well-posedness conditions under which the solution operator to a generalised heat equation is smoothing. One set of such conditions is given by (1.5)-(1.7) on page 134 in Treves \cite{treves} -- we state them here for completeness. Let $X$ be a manifold of dimension $n$ and $t$ a variable in the real line $\mathbb{R}$; we will consider vector functions with values in the finite dimensional space $H = \mathbb{C}^m$. Let $A(t)$ be a pseudodifferential operator of order $k$ with values in $L(H) = \mathbb{C}^{m \times m}$ depending smoothly on $t \in [0, T)$; this means that in a local chart $\Omega \subset X$ we have the symbol of $A(t)$ modulo $S^{-\infty}$ being a smooth function $a_\Omega(x, t, \xi): [0, T) \to S^k(\Omega; L(H))$. We consider the following equation in $X \times [0, T)$, where $U$ valued in $H$:
\begin{align*}
\frac{dU}{dt} - A(t) \circ U \equiv 0 \quad \text{modulo} \quad S^{-\infty}
\end{align*}
The set of conditions for this equation to be well-posed is the following:
\begin{condition}[Well-posedness of the heat equation]\label{condtreves}
For every local chart $\Omega \subset X$, there is a symbol $a(x, t, \xi)$ depending smoothly on $t \in [0, T)$ and defining a pseudodifferential operator $A_\Omega(t)$ congruent to $A(t)$ modulo regularising operators in $\Omega$, such that for every compact $K \subset \Omega \times [0, T)$ there is a compact subset $K'$ of the open half-plane $\mathbb{C}_- = \{z \in \mathbb{C} \mid \re(z) < 0\}$ such that
\begin{align}\label{condtreveseqn}
z \times Id - \frac{a(x, t, \xi)}{(1 + |\xi|^2)^{\frac{m}{2}}}: H \to H 
\end{align}
is a bijection for all $(x, t) \in K$, $\xi \in \mathbb{R}^n$ and $z \in \mathbb{C} \setminus K'$.
\end{condition}

One remark is in place after this condition:

\begin{rem}\rm
In fact, the symbol of the Laplace operator in the ordinary heat equation does not immediately satisfy Condition \ref{condtreves} for a well-posed (generalised) heat equation  -- if one plugs $-|\xi|^2$ ($m = 1$) into $\eqref{condtreveseqn}$, we have that the zero set spreads such that we have $\re(z) \in (-1, 0)$ and $\im(z) = 0$, which is certainly not contained in a compact subset of $\mathbb{C}_{-} = \{\re(z) < 0\}$; the trick is to add a factor of $e^{-|\xi|^2}$ which does not change the class of the symbol modulo $S^{-\infty}$, as we will see in the proof of the Lemma below.
\end{rem}

Using the idea in the above remark, we prove that the operator we use in Proposition \ref{PDO} satisfies Condition \ref{condtreves}:

\begin{lemma}\label{WPHEQN}
The $\mathbb{C}^{m \times m}$-valued pseudodifferential operator $A = B - E \times Id$ (defined in Lemma \ref{factorisationlemma}) satisfies Condition \ref{condtreves}.
\end{lemma}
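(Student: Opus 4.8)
The plan is to verify Condition \ref{condtreves} one coordinate chart at a time on $X=\partial M$, using the explicit recursion for the symbol of $B$ from Lemma \ref{factorisationlemma} together with the fact that the leading symbol of $A=B-E\times Id$ is a strictly negative real multiple of the identity. Fix a precompact coordinate chart $\Omega\subset\partial M$ in the semigeodesic coordinates $x'$, and (with the substitution $t=T-x^n$) regard $A=B-E\times Id$ as a smooth one-parameter family $t\mapsto A(t)$ of first-order $\mathbb{C}^{m\times m}$-valued pseudodifferential operators on $\partial M$. By Lemma \ref{factorisationlemma} the full symbol $a=a(x',t,\xi')$ of $A(t)$ is a classical symbol of order one, smooth in $(x',t)$ up to the closed interval $[0,T]$, and decomposes as $a=a_1+(a-a_1)$ with $a_1=-\sqrt{g^{\alpha\beta}\xi_\alpha\xi_\beta}\times Id=:-|\xi'|_g\times Id$ (see \eqref{b1}) and $a-a_1\in S^0(\Omega;\mathbb{C}^{m\times m})$. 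Since $g^{\alpha\beta}$ is positive definite, there is $c_0>0$ with $|\xi'|_g\ge c_0|\xi'|$ uniformly on the compact set $\overline{\Omega}\times[0,T]$. As $A$ has order one, Condition \ref{condtreves} concerns the normalized symbol $a(x',t,\xi')(1+|\xi'|^2)^{-1/2}$, and an $S^{-\infty}$ modification of $a$ is permitted; as in the remark preceding this lemma, I would take as representative $\tilde a:=a-C\,e^{-|\xi'|^2}\times Id$, where the constant $C=C(\Omega)>0$ is fixed below. This is legitimate because $e^{-|\xi'|^2}\times Id\in S^{-\infty}$ and is independent of $(x',t)$, so $\tilde a$ still depends smoothly on $t$ and defines an operator congruent to $A(t)$ modulo regularizing operators.

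The next step is to bound the spectrum of $\tilde N:=\tilde a\,(1+|\xi'|^2)^{-1/2}$, uniformly over $\overline{\Omega}\times[0,T]$ and over all $\xi'\in\mathbb{R}^{n-1}$. Since $\tilde a\in S^1$ with seminorms bounded on $\overline{\Omega}\times[0,T]$, the operator norm of $\tilde N$ is bounded by some $M=M(\Omega)$, so all eigenvalues of $\tilde N$ lie in $\{|z|\le M\}$; the point is to push their real parts below a fixed negative number. I would split the frequency range. For $|\xi'|\ge R$, write $\tilde N=-\frac{|\xi'|_g+C e^{-|\xi'|^2}}{(1+|\xi'|^2)^{1/2}}\,Id+\frac{a-a_1}{(1+|\xi'|^2)^{1/2}}$; the scalar coefficient is $\le-c_0/2$ once $R\ge 1$, and the matrix remainder has operator norm $\le C_2(\Omega)/R$ (so its eigenvalues have modulus at most $C_2/R$) because $a-a_1\in S^0$, so — adding a scalar to a matrix translates its spectrum by that scalar — every eigenvalue of $\tilde N$ has real part $\le-c_0/2+C_2/R\le-c_0/4$ once $R=R(\Omega)$ is large. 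For $|\xi'|\le R$, the eigenvalues of $a\,(1+|\xi'|^2)^{-1/2}$ have real part at most $C_1(\Omega)$ (again by the $S^1$ bound), and $\tilde N$ differs from it by the scalar $-C e^{-|\xi'|^2}(1+|\xi'|^2)^{-1/2}$, which on $|\xi'|\le R$ is $\le-C e^{-R^2}(1+R^2)^{-1/2}$; hence for $C=C(\Omega)$ large every eigenvalue of $\tilde N$ has real part $\le C_1-C e^{-R^2}(1+R^2)^{-1/2}\le-c_0/4$. Combining the two ranges, the spectrum of $\tilde N(x',t,\xi')$ lies in the compact set $K':=\{z:|z|\le M,\ \re z\le-c_0/4\}\subset\mathbb{C}_-$ for all $(x',t)\in\overline{\Omega}\times[0,T]$ and all $\xi'$. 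Consequently $z\times Id-\tilde N(x',t,\xi')$ is a bijection of $\mathbb{C}^m$ for every $z\in\mathbb{C}\setminus K'$; since the same $K'$ serves every compact $K\subset\Omega\times[0,T)$, and $\Omega$ was an arbitrary precompact chart, Condition \ref{condtreves} holds.

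The main — and essentially the only — difficulty is the degeneracy of the leading symbol $a_1$ at low frequencies (it vanishes at $\xi'=0$), where the lower-order matrix part of $a$ is unconstrained and could place eigenvalues of the bare normalized symbol on, or across, the imaginary axis. This is exactly the phenomenon illustrated for the scalar heat operator in the remark preceding the lemma, and it is dealt with in the same way: by the harmless $S^{-\infty}$ correction $-C\,e^{-|\xi'|^2}\times Id$, which is invisible at the level of operators modulo smoothing yet forces a uniform resolvent bound at bounded frequencies. Everything else — the symbol decomposition, the $S^0$ and $S^1$ estimates, and the uniformity of all constants — is routine, relying only on Lemma \ref{factorisationlemma} (which provides the symbol of $B$ smoothly up to $x^n=T$) and the compactness of $\overline{\Omega}\times[0,T]$.
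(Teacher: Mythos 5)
Your proposal is correct and follows essentially the same route as the paper's proof: the same decomposition of the symbol into the scalar principal part $-\sqrt{q_2}\times Id$ plus an $S^0$ remainder, the same high/low frequency split with uniform bounds on the compact chart, and the same $S^{-\infty}$ correction by $C e^{-|\xi'|^2}\times Id$ to push the spectrum of the normalized symbol into a fixed compact subset of the left half-plane at bounded frequencies. The only difference is cosmetic — you phrase the conclusion via spectral perturbation of a scalar matrix, while the paper derives an explicit inequality that must fail; both yield the same compact set $K'$.
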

\begin{proof}
Denote by $a_1 = -\sqrt{q_2} = -\sqrt{\sum_{\alpha, \beta} g^{\alpha\beta}\xi_\alpha \xi_\beta}$ the principal symbol of $A$ ($E$ has degree zero). If $K \subset [0, T] \times \mathbb{R}^{n-1}$ compact, then there exist positive $C_1$, $C_2$ and $c$ such that
\begin{align*}
c|\xi| \leq |a_1(x, t, \xi)| &\leq C_1(1 + |\xi|^2)^{\frac{1}{2}}\\
|a_0 (x, t, \xi)| &\leq C_2
\end{align*}
for all $(x, t) \in K$ and $\xi \in \mathbb{R}^{n-1}$, by definition of symbols and the fact that $g^{\alpha \beta}$ is positive definite. Thus we can rewrite:
\begin{align}\label{expression1}
z \times Id - \frac{-\sqrt{q_2}\times Id + a_0}{(1 + |\xi|^2)^{\frac{1}{2}}} = \Big(z + \frac{\sqrt{q_2}}{(1 + |\xi|^2)^{\frac{1}{2}}}\Big) \times Id - \frac{a_0}{(1 + |\xi|^2)^{\frac{1}{2}}}
\end{align}
and if this expression is singular, we ought to have 
\begin{align}\label{ineq1}
\frac{|a_0|^2}{1 + |\xi|^2} \geq m^2 \Big| z + \frac{\sqrt{q_2}}{(1 + |\xi|^2)^{\frac{1}{2}}}\Big|^2 = m^2 |s|^2 + m^2\Big(r + \frac{\sqrt{q_2}}{(1 + |\xi|^2)^{\frac{1}{2}}}\Big)^2
\end{align}
where $z = r + is$. If we had $|\xi|$ large enough and $r \geq -\epsilon$ for some small $\epsilon > 0$, the left hand side of \eqref{ineq1} would be small and the right hand side of it would be bigger than $s^2 + (r + \frac{c}{2})^2$ (up to a constant). Therefore for $|\xi| \geq K$ for some $K$, \eqref{expression1} will be non-singular for $r \geq -\epsilon$.

Notice that in the condition we have the freedom of adding a smoothing factor -- this will take care of the singular behaviour for $|\xi|$ in a compact set. We will add a factor of $Ce^{-|\xi|^2} \times Id \in S^{-\infty}$ for some $C>0$ to remedy this. First of all, notice that the above argument remains the same with the same $|\xi|$, if we consider the symbol $\sqrt{q_2} \times Id + a_0 + Ce^{-|\xi|^2} \times Id$.

Furthermore, we have the left hand side of \eqref{ineq1} bounded for all $\xi$ uniformly, whereas the right hand side is bigger (up to a constant) than $(Ce^{-|\xi|^2} - \epsilon)^2$ for $r \geq -\epsilon$, large enough $C$ and $|\xi| \leq K$. Clearly this inequality fails to hold for large $C$ and this finishes the proof.
\end{proof}

\section{Runge approximation}\label{app:B}

In this section, we give an argument that approximates a given function on an embedded curve in the interior of a compact manifold with boundary, by solutions to an elliptic equation which are compactly supported at a prescribed open set at the boundary. The results can be easily generalised to arbitrary elliptic operators with diagonal principal part and smooth coefficients.

For this, we will need a unique continuation result and some well-posedness for elliptic boundary value problems in negative Sobolev spaces. We recall some definitions first.

Let $(M, g)$ be a compact smooth Riemannian manifold with boundary, $E$ a smooth Hermitian vector bundle equipped with a smooth unitary connection $A$ and let $\Gamma \subset \partial M$ be an open set. We denote the twisted Laplacian acting on sections $C^\infty(M; E)$ by $\Lapl = d_A^*d_A$. We recall that by the usual theory (see the appendix in \cite{cek3} for details) that the problem:
\begin{align*}
    \Lapl u = 0, \quad u|_{\partial M} = f
\end{align*}
is uniquely solvable for $f \in H^{\frac{1}{2}}(\partial M; E)$ and yields a solution $u \in H^1(M; E)$. Then the covariant normal derivative is defined weakly as $d_A(u)(\nu) \in H^{-\frac{1}{2}}(\partial M; E)$, by its action on $H^{\frac{1}{2}}(M; E)$.

We proceed to define the Sobolev spaces, for $s \geq 0$:
\begin{align*}
    \widetilde{H}^s(\Gamma; E) := \text{closure of } \{g \in H^s(\partial M; E) \mid \supp{g} \subset \Gamma\} \text{ in } H^s(\partial M; E)
\end{align*}
Then the dual of this space is given by $\big(\widetilde{H}^s(\Gamma; E)\big)' = H^{-s}(\Gamma; E)$ (see \cite{RS17} and references therein).

Now assume $M_1$ is a smooth submanifold (zero codimension) with smooth boundary, compactly contained in $M^\circ$. We define the following spaces of solutions, for $s \geq \frac{3}{2}$:

\begin{align}
    S_1 &:= \{u \in H^{s - \frac{1}{2}}(M_1; E) \mid \Lapl u = 0\} \subset H^{s - \frac{1}{2}}(M_1; E)\\
    X &:= \text{closure of $S_1$ in } H^{s - \frac{1}{2}}(M_1; E)\\
    S &:=  \{u \in H^{s + \frac{1}{2}}(M; E) \mid \Lapl u = 0, \, u|_{\partial M} \in \widetilde{H}^s(\Gamma; E)\}
\end{align}

Moreover, we define the restriction map for $s \geq \frac{3}{2}$:
\begin{align}
    A: \widetilde{H}^s(\Gamma; E) &\to X \subset H^{s - \frac{1}{2}}(M_1; E)\\
    g &\mapsto u|_{M_1}
\end{align}
where $u \in H^{s + \frac{1}{2}}(M; E)$ is the unique solution to $\Lapl u = 0$ with $u|_{\partial M} = g$. Then the dual map $A'$ to $A$ satisfies:
\begin{align}\label{eq:dualmap}
    A': \big(H^{s - \frac{1}{2}}(M_1; E)\big)' &\to H^{-s}(\Gamma; E)\\
    h &\mapsto d_A(w)(\nu)|_{\Gamma}
\end{align}
where the prime denotes the topological dual and $w \in H^{-s + \frac{3}{2}}(M; E)$ is given by solving the Dirichlet problem
\begin{align}\label{eq:dirichletnegsob}
    \Lapl^* w = \begin{cases}
                    h, \text{ in } M,\\
                    0 \text{ in } M\setminus M_1
                \end{cases}
\end{align}
with $w = 0$ on $\partial M$. Note that $\big(H^{s - \frac{1}{2}}(M_1; E)\big)' \subset H^{-(s - \frac{1}{2})}(M_1; E) = \big(H_0^{s - \frac{1}{2}}(M_1; E)\big)'$. Here we note that the well-posedness theory of \eqref{eq:dirichletnegsob} is not trivial and follows from Proposition \ref{prop:negsobolev}, by noting that the right hand side of \eqref{eq:dirichletnegsob} is compactly supported in the interior of $M$ and so lies in the allowed space of inhomogeneities $\Xi^{-(s - \frac{1}{2})}(M; E)$ defined around the lines of \eqref{eq:inhomogeneitynegsob}. These technicalities are postponed for later to simplify exposition.

The mapping property \eqref{eq:dualmap} follows from
\begin{equation}\label{eq:widentity}
    (Ag, h)_{L^2(M_1; E)} = (u, \Lapl^*w)_{L^2(M; E)} = \big(g, d_A(w)(\nu)\big)_{L^2(\partial M; E)}
\end{equation}
where the last equality follows from Stokes' theorem. We are ready to make a statement, with notation as above; the proof mimics the proof of Theorem 1 in \cite{RS17}, but it was well-known before -- see e.g. the work of Browder \cite{BII}, Theorem 3.22. The idea is to reduce the statement by duality to a unique continuation principle.

We first prove the theorem, granted technical ingredients proven in Proposition \ref{prop:negsobolev}.

\begin{theorem}\label{thm:rungethm}
    Assume $M \setminus M_1$ is connected and $s \geq \frac{3}{2}$. For any $\varepsilon > 0$, $h \in S_1$, there exists $u \in S$ such that
    \begin{equation}
        \lVert{h - u|_{M_1}}\rVert_{H^{s - \frac{1}{2}}(M_1; E)} < \varepsilon
    \end{equation}
\end{theorem}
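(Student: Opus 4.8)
The plan is to obtain Theorem~\ref{thm:rungethm} from a weak unique continuation principle via the usual Hahn--Banach duality argument, using the dual map $A'$ from \eqref{eq:dualmap} and the identity \eqref{eq:widentity} recorded above. The assertion to be proved is equivalent to the density of $\mathrm{Range}(A) = \{u|_{M_1} : u \in S\}$ in $S_1$ with respect to the $H^{s-\frac12}(M_1;E)$ topology, i.e.\ density in $X = \overline{S_1}$; by Hahn--Banach this amounts to showing that any functional $h \in \big(H^{s-\frac12}(M_1;E)\big)'$ annihilating $\mathrm{Range}(A)$ must annihilate $S_1$. So I would fix such an $h$ and, using Proposition~\ref{prop:negsobolev} (whose role is exactly the well-posedness of \eqref{eq:dirichletnegsob} for interior-supported right-hand sides), produce the unique $w \in H^{-s+\frac32}(M;E)$ with $\Lapl^* w = h$ in $M$, $\Lapl^* w = 0$ in $M \setminus M_1$, and $w|_{\partial M} = 0$.

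The first real step is to convert the annihilation hypothesis into vanishing Cauchy data for $w$. By \eqref{eq:widentity}, $(Ag,h)_{L^2(M_1;E)} = \big(g, d_A(w)(\nu)\big)_{L^2(\partial M;E)}$ for every $g \in \widetilde{H}^s(\Gamma;E)$, and the left-hand side is zero since $h$ kills $\mathrm{Range}(A)$; letting $g$ vary over $\widetilde{H}^s(\Gamma;E)$ forces $d_A(w)(\nu)|_{\Gamma} = 0$. Together with $w|_{\partial M} = 0$ this says that $w$ has vanishing Cauchy data on the open piece $\Gamma \subset \partial(M\setminus M_1)$ (note $\partial M \subset \partial(M\setminus M_1)$ since $M_1$ is compactly contained in $M^\circ$). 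On $M^\circ \setminus M_1$ one has $\Lapl^* w = 0$, so interior elliptic regularity makes $w$ a genuine smooth solution there, smooth up to $\Gamma$; as $\Lapl^* = d_A^*d_A$ is of Laplace type (scalar principal symbol $|\xi|_g^2\times\mathrm{Id}$), the weak unique continuation from Cauchy data on an open boundary portion applies (for instance by doubling across $\Gamma$ and invoking the interior UCP, or directly via the boundary UCP for such systems, cf.\ \cite{isakov}). Since $M\setminus M_1$ is connected and meets $\partial M$ exactly along $\Gamma$, this yields $w \equiv 0$ on $M\setminus M_1$.

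It remains to conclude. Because $w$ vanishes on all of $M\setminus M_1$, its Cauchy data on $\partial M_1$ vanish, so $w|_{M_1}$ has zero Dirichlet and zero covariant-normal traces on $\partial M_1$; hence Green's identity (legitimate in this low-regularity setting by the same duality/density estimates underlying Proposition~\ref{prop:negsobolev}) gives, for any $v \in S_1$,
\begin{equation*}
(v,h)_{L^2(M_1;E)} = (v,\Lapl^* w)_{L^2(M_1;E)} = (\Lapl v, w)_{L^2(M_1;E)} = 0,
\end{equation*}
using $\Lapl v = 0$ and the vanishing of the boundary terms. Thus $h$ annihilates $S_1$, which closes the Hahn--Banach argument and establishes the density statement; restating density gives exactly the $\varepsilon$-approximation claimed in the theorem.

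The main obstacle is not this soft skeleton but the low-regularity bookkeeping it rests on: giving precise meaning to $w \in H^{-s+\frac32}(M;E)$ and to its weakly defined covariant normal derivative $d_A(w)(\nu)$, justifying the duality identity \eqref{eq:widentity} and the final Green's identity in negative Sobolev spaces, and pinning down a citable form of the weak unique continuation principle for Laplace-type systems up to an open boundary piece. All of this is precisely what is deferred to Proposition~\ref{prop:negsobolev} and the cited unique continuation results; granted those, the argument above goes through verbatim.
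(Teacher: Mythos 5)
Your proposal is correct and follows essentially the same route as the paper: the Hahn--Banach reduction to density of $\mathrm{Range}(A)$ in $X$, the use of \eqref{eq:widentity} to convert annihilation of the range into vanishing Cauchy data of the dual solution $w$ on $\Gamma$, unique continuation across the connected set $M\setminus M_1$, and the final Green's identity against $v\in S_1$. The low-regularity issues you flag are exactly those the paper defers to Proposition \ref{prop:negsobolev}, so the argument matches.
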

\begin{proof}
    It suffices to prove that the range of $A$ is dense in $X$. So by Hahn-Banach, it suffices to prove that for any linear functional $T$ on $H^{s - \frac{1}{2}}(M_1; E)$ with
    \begin{equation}
        T(Ag) = 0
    \end{equation}
    for all $g \in \widetilde{H}^s(\Gamma; E)$, then $Tv = 0$ for all $v \in S$. By duality, every such $T$ is given by an $h \in H^{-(s - \frac{1}{2})}(M_1; E)$ via $T(\cdot) = (\cdot, h)_{L^2}$. 
    
    So, assume $(Ag, h)_{L^2} = 0$ for all $g \in \widetilde{H}^s(\Gamma; E)$; we want to prove $(v, h) = 0$ for all $v \in S$. If $w \in H^{-s + \frac{3}{2}}(M; E)$ satisfies equation \eqref{eq:dirichletnegsob}, then by \eqref{eq:widentity} we get
    \begin{equation}
        \big(g, d_A(w)(\nu)\big)_{L^2(\partial M; E)} = 0
    \end{equation}
    for all $g \in \widetilde{H}^s(\Gamma; E)$. Thus $d_A(w)(\nu)|_{\Gamma} = 0$.
    
    Therefore, $w$ solves:
    \begin{equation}
        \begin{cases}
            \Lapl^* w= 0\\
            w|_{\partial M} = 0\\
            d_A(w)(\nu)|_{\Gamma} = 0
        \end{cases}
    \end{equation}
    Then, by the regularity properties given in Proposition \ref{prop:negsobolev} (b), we have that $w \in C^\infty$ in a neighbourhood of a slightly smaller domain of $\Gamma$; by the UCP for local data for $\Lapl$ we get that $w = 0$ in the same domain. Moreover, we get $w \equiv 0$ on the whole of $M \setminus M_1$. Therefore, $h = \Lapl^*(w|_{M_1})$ and $w|_{\partial M_1} = d_A(w)(\nu)|_{\partial M_1} = 0$.
    
    Finally, if $v \in S_1$ then:
    \begin{equation}
        (v, h)_{L^2(M_1; E)} = \big(v, \Lapl^*(w|_{M_1})\big)_{L^2(M_1; E)} = (\Lapl v, w|_{M_1})_{L^2(M_1; E)} = 0
    \end{equation}
    which finishes the proof.
\end{proof}

We record a simple corollary to this Theorem; this result is similar to Theorem 3.22. and 3.23. \cite{BII}, but the proof is different. The interested reader should consult also the other work of Browder.

\begin{corollary}\label{cor:rungecor}
    Let $\varepsilon > 0$. Then for every $v \in S_1 \cap C^\infty(\overline{M_1}; E)$, there exists $u \in C^\infty(\overline{M}; E)$ with $\supp{(u|_{\partial M})} \subset \Gamma$ such that
    \begin{equation}
        \lVert{v - u|_{M_1}}\rVert_{C^0(M_1; E)} < \varepsilon
    \end{equation}
\end{corollary}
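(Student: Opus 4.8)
The plan is to deduce the corollary from Theorem \ref{thm:rungethm} in two moves: first upgrade the topology from $H^{s-\frac12}$ to $C^0$ by choosing $s$ large, and then upgrade the regularity of the approximating solution from $H^{s+\frac12}(M;E)$ to $C^\infty(\overline{M};E)$ by smoothing out its boundary data inside $\Gamma$. Here it is essential that Theorem \ref{thm:rungethm} is available for \emph{every} $s \geq \frac32$ rather than a single fixed exponent.

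First I would fix an integer $s$ with $s \geq \frac32$ and $s - \frac12 > \frac n2$, so that the Sobolev embedding $H^{s-\frac12}(M_1;E) \hookrightarrow C^0(\overline{M_1};E)$ holds with some constant $C_S$. Since $v \in C^\infty(\overline{M_1};E)$, it lies in $H^{s-\frac12}(M_1;E)$, and by hypothesis $v \in S_1$. Applying Theorem \ref{thm:rungethm} with error $\varepsilon/(2C_S)$ produces $u \in S$ with $\lVert v - u|_{M_1}\rVert_{H^{s-\frac12}(M_1;E)} < \varepsilon/(2C_S)$, hence $\lVert v - u|_{M_1}\rVert_{C^0(M_1;E)} < \varepsilon/2$. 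Note $u \in H^{s+\frac12}(M;E)$, $\Lapl u = 0$, and $f := u|_{\partial M} \in \widetilde{H}^s(\Gamma;E)$.

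Next I would approximate $f$. By definition $\widetilde{H}^s(\Gamma;E)$ is the $H^s$-closure of sections supported in $\Gamma$, and any such section can be mollified while keeping its support inside the open set $\Gamma$, so $C_0^\infty(\Gamma;E)$ is dense in $\widetilde{H}^s(\Gamma;E)$. Pick $f_k \in C_0^\infty(\Gamma;E)$ with $f_k \to f$ in $H^s(\partial M;E)$, and let $u_k \in H^{s+\frac12}(M;E)$ solve $\Lapl u_k = 0$, $u_k|_{\partial M} = f_k$ (well-posed since $\ker\Lapl = \{0\}$ under Dirichlet conditions, as recalled in Appendix \ref{app:B}). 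Since $f_k$ extends by zero to a smooth section of $E|_{\partial M}$, interior elliptic regularity together with elliptic boundary regularity gives $u_k \in C^\infty(\overline{M};E)$, and clearly $\supp(u_k|_{\partial M}) \subset \Gamma$. Continuity of the Dirichlet solution operator $H^s(\partial M;E) \to H^{s+\frac12}(M;E)$ yields $u_k \to u$ in $H^{s+\frac12}(M;E)$, and then continuity of the restriction $H^{s+\frac12}(M;E) \to H^{s-\frac12}(M_1;E)$ followed by the Sobolev embedding above gives $u_k|_{M_1} \to u|_{M_1}$ in $C^0(M_1;E)$. Choosing $k$ so large that $\lVert u|_{M_1} - u_k|_{M_1}\rVert_{C^0(M_1;E)} < \varepsilon/2$ and setting $u := u_k$ finishes the proof, since $\lVert v - u_k|_{M_1}\rVert_{C^0(M_1;E)} \leq \lVert v - u|_{M_1}\rVert_{C^0} + \lVert u|_{M_1} - u_k|_{M_1}\rVert_{C^0} < \varepsilon$.

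There is no serious obstacle here: the only two points requiring (routine) care are the density of $C_0^\infty(\Gamma;E)$ in $\widetilde{H}^s(\Gamma;E)$ and the boundary elliptic regularity promoting $u_k$ to $C^\infty(\overline{M};E)$, both standard. If one wanted to be maximally economical one could even skip the intermediate $u \in S$ and argue directly that smooth elements of $S$ are dense in $S$ for the $H^{s+\frac12}$-norm, but the two-step route above is cleaner to write.
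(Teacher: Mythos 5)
Your argument is correct and is essentially the paper's own proof, written out in full: the paper likewise picks $s > \max\{\tfrac{n+1}{2},\tfrac32\}$ so that Sobolev embedding upgrades the $H^{s-\frac12}(M_1)$ approximation from Theorem \ref{thm:rungethm} to $C^0$, and then invokes the density of $C_0^\infty(\Gamma;E)$ in $\widetilde{H}^s(\Gamma;E)$ (plus elliptic regularity) to replace the $H^{s+\frac12}$ solution by a smooth one with boundary data supported in $\Gamma$. Your write-up merely makes explicit the boundary-datum approximation and the continuity of the solution and restriction operators that the paper leaves implicit.
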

\begin{proof}
    By the Sobolev embedding theorem, $W^{k, p}(M; E)$\footnote{Here we denote by $W^{k, p}(M; E)$ the $L^p$-based Sobolev space with $k \in \mathbb{R}$ derivatives.} continuously embeds into $C^0(\overline{M}; E)$ for Sobolev indices satisfying $kp > n$.
    
    Therefore if we take $s > \max{\{\frac{n+1}{2}, \frac{3}{2}}\}$, then by Theorem \ref{thm:rungethm} and the fact that $C_0^\infty(\Gamma) \subset \widetilde{H}^{s}(\Gamma; E)$ we prove the claim.
\end{proof}

We gather all the results we have used in this section about well-posedness in negative Sobolev spaces and unique continuation in one Proposition. There are similarities with Lemma B.2. \cite{conformalcalderon}.

We first introduce the spaces for which we can allow inhomogeneity. Following \cite{LM72}, Chapter 2, for each $r \geq 0$ we introduce
\begin{equation}
    \mathcal{D}^{-r}(M; E) = \{ u \mid u \in H^{-r}(M; E), \, \Lapl u \in \Xi^{-2 -r}(M; E)\}
\end{equation}
and equip it with the graph norm; it is a Hilbert space. Then by Theorem 6.4. \cite{LM72}, $C^\infty(\overline{M}; E)$ is dense in $\mathcal{D}^{-r}(M; E)$ for all $r \geq 0$ with $r -\frac{1}{2} \not\in\mathbb{Z}$ and $\mathcal{D}^{-r}(M; E) \subset H^{-r}(M; E)$ continuously by definition.

We are left to define the $\Xi$ spaces -- for natural numbers $s$, these are locally modelled on $\Omega \subset \mathbb{R}^n$ with smooth boundary as:
\begin{equation}\label{eq:inhomogeneitynegsob}
    \Xi^s(\Omega; \mathbb{C}^m) := \{u \in L^2(\Omega; \mathbb{C}^m) \mid \rho^{|\alpha|} D^\alpha u \in L^2(\Omega; \mathbb{C}^m), \, |\alpha| \leq s\}
\end{equation}
where $\rho$ is a smooth boundary defining function (positive in the interior, vanishing at $\partial \Omega$) and we set the corresponding norm of $u$ to be the sum of $L^2$ norms of $\rho^{|\alpha|}D^\alpha u$, giving a Hilbert space. For positive real $s$, these spaces are defined by interpolation (see \cite{LM72} for details) and for $s > 0$, we define the negative ones as $\Xi^{-s}(\Omega; \mathbb{C}^m) = (\Xi^s(\Omega; \mathbb{C}^m))'$.

The generalisations of these spaces to manifolds are given in the usual manner.

We also record Theorem 6.5. \cite{LM72} about traces. It says that the maps $T_1:u \mapsto u|_{\partial M}$ and $T_2:u \mapsto d_A(u)(\nu)|_{\partial M}$ extend continuously from $C^\infty(\overline{M}; E)$ to maps
\begin{align}\label{eq:traces}
    T_1: \mathcal{D}^{-r}(M; E) \to H^{-r - \frac{1}{2}}(M; E) \,\,\, \text{ and } \,\,\, T_2: \mathcal{D}^{-r}(M; E) \to H^{-r - \frac{3}{2}}(M; E)
\end{align}

\begin{prop}\label{prop:negsobolev}
    (a) For $s < 0$, the Dirichlet problem
    \begin{align}
        \Lapl u = f \in \Xi^{s - 2}(M; E)\\
        u|_{\partial M} = g \in H^{s - \frac{1}{2}}(M; E)
    \end{align}
    has a unique solution $u \in \mathcal{D}^{s}(M; E)$, where the restriction $u|_{\partial M}$ is interpreted in the sense of equation \eqref{eq:traces}.\\
    (b) If $g \in H^{s - \frac{1}{2}}(M; E)$ is $C^\infty$ near a point $p \in \partial M$ and $f = 0$ near $p$, then $u$ is also $C^\infty$ near $p$.\\
\end{prop}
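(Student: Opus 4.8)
The plan is to obtain part (a) by the transposition (duality) method of Lions--Magenes, using that the Dirichlet problem for $\Lapl$ is an isomorphism in the \emph{positive} part of the scale, and to obtain part (b) from the local regularity theory for elliptic boundary value problems together with an elementary bootstrap. The facts to be dualized are as follows. Since $A$ is unitary, $\Lapl = d_A^*d_A$ is a formally self-adjoint (so $\Lapl^* = \Lapl$), second order elliptic operator with smooth coefficients and scalar principal symbol $|\xi|_g^2\times\mathrm{Id}$, and its Dirichlet realisation has trivial kernel: if $\Lapl u=0$, $u|_{\partial M}=0$ with $u\in H^1(M;E)$, then $0=(\Lapl u,u)_M=\lVert d_A u\rVert_{L^2}^2$, so $d_A u=0$ and hence $u\equiv 0$ by integrating along paths out to $\partial M$. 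Consequently, by the classical theory (\cite{LM72}, Chapter 2, together with the $\Xi$-scale regularity developed there), for every $\sigma\ge 0$ the map $v\mapsto(\Lapl^* v,v|_{\partial M})$ is an isomorphism from $\Xi^{\sigma+2}(M;E)$ onto $\Xi^{\sigma}(M;E)\times H^{\sigma+\frac32}(\partial M;E)$; in particular each $\phi\in\Xi^{\sigma}(M;E)$ determines a unique $v_\phi\in\Xi^{\sigma+2}(M;E)$ with $\Lapl^* v_\phi=\phi$ and $v_\phi|_{\partial M}=0$, depending continuously on $\phi$, and then $(d_A v_\phi)(\nu)\in H^{\sigma+\frac32}(\partial M;E)$ depends continuously on $\phi$ as well.

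For part (a), set $r=-s>0$, and recall the Green's formula $(\Lapl u,v)_M-(u,\Lapl^* v)_M=\big((d_A u)(\nu),v\big)_{\partial M}-\big(u,(d_A v)(\nu)\big)_{\partial M}$ valid for $u,v\in C^\infty(\overline M;E)$. Specialising $v=v_\phi$ (so $v_\phi|_{\partial M}=0$) this becomes $(u,\phi)_M=(f,v_\phi)_M+\big(g,(d_A v_\phi)(\nu)\big)_{\partial M}$, and I would take the right-hand side as the \emph{definition} of the action of the sought $u$ on $\phi\in\Xi^{r}(M;E)$. By the continuity of $\phi\mapsto v_\phi$ and $\phi\mapsto(d_A v_\phi)(\nu)$ recalled above and the hypotheses $f\in\Xi^{-r-2}(M;E)=\big(\Xi^{r+2}(M;E)\big)'$, $g\in H^{-r-\frac12}(\partial M;E)$, this is a bounded functional; by the transposition scheme of \cite{LM72}, Chapter 2, it represents an element $u\in\mathcal D^{-r}(M;E)$. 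Testing against sections of the form $\Lapl^*\psi$ with $\psi\in C_0^\infty(M^\circ)$ --- for which $v_{\Lapl^*\psi}=\psi$ --- yields $(u,\Lapl^*\psi)_M=(f,\psi)_M$, i.e.\ $\Lapl u=f$ in $M^\circ$; combined with $f\in\Xi^{-r-2}(M;E)$ this confirms $u\in\mathcal D^{-r}(M;E)$ in the required sense and makes the generalised trace $u|_{\partial M}$ from \eqref{eq:traces} well defined. Comparing the extended Green's formula on $\mathcal D^{-r}(M;E)$ (available from the density statement and \eqref{eq:traces}) with the defining identity gives $\big(u|_{\partial M}-g,(d_A v_\phi)(\nu)\big)_{\partial M}=0$ for all $\phi$, and $(d_A v_\phi)(\nu)$ exhausts $H^{r+\frac12}(\partial M;E)$, so $u|_{\partial M}=g$. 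Uniqueness is the same computation with $f=0$ and $g=0$: the extended Green's formula yields $(u,\Lapl^* v)_M=0$ for all $v\in C^\infty(\overline M;E)$ with $v|_{\partial M}=0$, hence $(u,\phi)_M=0$ for all $\phi\in C_0^\infty(M^\circ)$, so $u=0$.

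For part (b), interior ellipticity already gives $u\in C^\infty$ in $M^\circ$ near $p$, since $\Lapl u=0$ there; it remains to prove smoothness up to $\partial M$ near $p$. Subtracting from $u$ a section with compact support that is smooth near $p$ and equals $g$ on $\partial M$ near $p$ reduces us to the case of homogeneous Dirichlet datum and right-hand side smooth near $p$. Then, flattening $\partial M$ near $p$ and applying the regularity estimates for the (Lopatinskii--elliptic) Dirichlet problem from \cite{LM72}, Chapter 2, starting from $u\in H^{-r}$ locally, each application gains two derivatives, so $u\in H^{k}_{\mathrm{loc}}$ near $p$ for all $k$, that is $u\in C^\infty$ near $p$.

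The step I expect to be the main obstacle is the bookkeeping in part (a): one must arrange that the positive-scale Dirichlet isomorphism is used precisely with the weighted inhomogeneity spaces $\Xi^\sigma$ rather than plain $H^\sigma$, so that the dual object lands in $\mathcal D^{-r}(M;E)$ and all pairings in the defining identity are finite; the exceptional half-integer values $r-\tfrac12\in\mathbb Z$ require the slightly modified spaces of \cite{LM72} but cause no genuine difficulty.
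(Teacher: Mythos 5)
Your argument is correct and is essentially the paper's own proof unpacked: the paper simply cites Theorem 6.7 of Lions--Magenes \cite{LM72} for (a), whose proof is exactly the transposition scheme you reconstruct, and the proof of Lemma B.2 (b) of \cite{conformalcalderon} for (b), which is the same localisation-plus-bootstrap in the transposed regularity scale that you describe.
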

\begin{proof}
    Part (a) follows from Theorem 6.7. from \cite{LM72}. Part (b) follows from the proof of Lemma B.2. (b) \cite{conformalcalderon} generalised to systems.
\end{proof}

%\bibliographystyle{abbrv}
%\bibliography{Yang-Mills-references}
%\nocite{*}

%\begin{rem}\rm Below one can find the code that when typed into Mathematica, gives Figure \ref{zero_set}:
%\footnotesize
%\begin{verbatim}
%uval = NDSolveValue[{D[u[x, y], x, x] + D[u[x, y], y, y] == 0, 
%  DirichletCondition[u[x, y] == ArcTan[x, y]*Sin[100/ArcTan[x, y]], True]},
%   u, {x, y} \[Element]  Disk[]]
%ContourPlot[{uval[x, y] == 0, x^2+ y^2 == 1},Element[{x, y}, Disk[{0, 0}, 1.001]]]
%\end{verbatim}
%\end{rem}

\end{document}